\let\mathcal\mathscr
\DeclareRobustCommand{\SkipTocEntry}[5]{}
\def\C{{\bf C}}
\def\llra{\hbox to 10mm{\rightarrowfill}}
\def\lllra{\hbox to 15mm{\rightarrowfill}}
\def\PA{{\widehat A}}
\def\PB{{\widehat B}}
\def\PE{{\widehat E}}
\def\PC{{\widehat C}}
\def\phi{{\varphi}}
\def\wX{{\widetilde X}}
\def\wZ{{\widetilde Z}}
\def\cF{\mathcal{F}}
\def\cL{\mathcal{L}}
\def\cO{\mathcal{O}}
\def\cG{\mathcal{G}}
\def\cHom{\mathcal{Hom}}
\def\cC{\mathcal{C}}
\def\cM{\mathcal{M}}
\def\cQ{\mathcal{Q}}
\let\tilde\widetilde
\DeclareMathOperator{\rank}{rank}
\DeclareMathOperator{\Pic}{Pic}
\DeclareMathOperator{\Supp}{Supp}
\DeclareMathOperator{\vol}{vol}
\newtheorem{lemm}{Lemma}[section]
\newtheorem{theo}[lemm]{Theorem}
\newtheorem{coro}[lemm]{Corollary}
\newtheorem{prop}[lemm]{Proposition}
\newtheorem{claim}[lemm]{Claim}
\theoremstyle{definition}
\newtheorem{defi}[lemm]{Definition}
\newtheorem{rema}[lemm]{Remark}
\newtheorem{exam}[lemm]{Example}
\theoremstyle{remark}
\newtheorem*{remark*}{Remark}
\newtheorem*{note*}{Note}
\begin{document}
\title{Some results on the eventual paracanonical maps}
\author[Z.~Jiang]{Zhi Jiang}
\address{Shanghai center for mathematical sciences\\
22 floor, East Guanghua Tower\\
Fudan University, Shanghai}
\email{zhijiang@fudan.edu.cn}

\keywords{the eventual paracanonical maps, M-regular sheaves, abelian covers}

\subjclass[2010]{14E05, 14F17, 14K99, 14J30}
\begin{abstract}
The eventual paracanonical map was introduced by Barja, Pardini, and Stoppino in order to prove refined  Severi-type inequalities. We study the general structures of the eventual paracanonical maps by generic vanishing theory. In particular, we show that the Albanese morphism of $X$ admits very special structure if $X$ is a surface or a threefold and the eventual paracanonical map of $X$ is not birational.
\end{abstract}
\maketitle

\section{Introduction}
Let $X$ be a smooth projective variety.
Assume that $a: X\rightarrow A$ is a primitive morphism to an abelian variety, which is generically finite onto its image, and $L$ is a line bundle on $X$ such that $H^0(X, L\otimes a^*P)\neq 0$ for $P\in\Pic^0(A)$ general, Barja, Pardini and Stoppino \cite{BPS1} defined the eventual map of $L$ with respect to $a$: $$\varphi_{L,a}: X\rightarrow Z_{L,a}$$ such that $\varphi_{L,a}$ is generically finite and $a$ factors birationally through $\varphi_{L,a}$. 

The map $\varphi_{L,a}$ can be constructed the following way. Take  $d>0$ sufficiently large and divisible and let $m_d: A\rightarrow A$ be the multiplication by $d$ and take $\pi: X^{(d)}\rightarrow X$ the induced \'etale cover by $m_d$. Let $\varphi_d$ be the map induced by the linear system $|\pi^*L\otimes P|$ for $P\in \Pic^0(A)$ general. Then $\varphi_d: X^{(d)}\rightarrow Z^{(d)}$ is generically finite onto its image. Moreover the Galois group $G$ of $\pi$ acts naturally  on $Z^{(d)}$, $\varphi_d$ is $G$-equivariant, and the quotient of $\varphi_d$ by $G$ gives us the eventual map $\varphi_{L,a}$ of $L$. 

We  denote by $a_X: X\rightarrow A_X$ the Albanese morphism of $X$. We shall denote by $\varphi_L$ instead of $\varphi_{L, a_X}$. All morphisms $a: X\rightarrow A$ factor through $a_X$ by the universal property of $a_X$. 
However, $\varphi_{L, a}$ does not factor birationally through  $\varphi_{L}$, as we can see in the following example.
 \begin{exam}
 Let $D$ be a smooth projective curve of genus $g(D)\geq 1$ and let $\rho: C\rightarrow D$ be a ramified double cover from a smooth projective curve $C$ to $D$ with $g(C)>2g(D)$. Let $L$ be a line bundle  of degree $g(D)$ on $D$. Then $\rho^*L$ has degree $2g(D)<g(C)$. Hence $h^0(C, \rho^*L\otimes P)=0$ for $P\in\Pic^0(A_C)$ and we cannot define $\varphi_{\rho^*L, a_C}$. On the other hand, for $P_1\in\Pic^0(A_D)$ general, $h^0(C, \rho^*L\otimes \rho^*P_1)=h^0(D, L\otimes P_1)=1$ and  one can check that   $\varphi_{\rho^*L, a}$ is birationally equivalent to $\rho$. 
 \end{exam}
 
Among all line bundles $L$, we are in particular interested in the canonical divisor $K_X$. If $\chi(X, \omega_X)=h^0(X, K_X\otimes P)>0$ for $P\in\Pic^0(X)$ general, we will call the eventual map of $K_X$ with respect to $a_X$ the eventual paracanonical map of $X$ and we use the following notations:
\begin{eqnarray*}
a_X: X\xrightarrow{\varphi_X} Z\xrightarrow{g} A_X,
\end{eqnarray*}
where after birational modifications, we will assume that $Z$ is smooth and $\varphi$ is generically finite and surjective. 

Barja, Pardini and Stoppino proved in \cite{BPS2} that 
\begin{itemize}
\item[(A)]either $\chi(Z, \omega_Z)=0$ or $\chi(Z, \omega_Z)=\chi(X, \omega_X)>0$; \item[(B)]assume $\dim X=2$, then $\deg \varphi_X\leq 4$ and if moreover $\chi(Z, \omega_Z)=\chi(X, \omega_X)>0$, $\deg\varphi_X\leq 2$. 
\end{itemize}
The bound $\deg\varphi_X\leq 4$ for surfaces is optimal, as Barja, Pardini and Stoppino constructed a surface $X$ whose eventual paracanonical map is a birational $(\mathbb{Z}/2\mathbb{Z})^2$-cover of $A_X$. 

Barja, Pardini and Stoppino then asked \cite[Question 5.6]{BPS2}
 whether or not there exist  surfaces $X$ with $\chi(Z, \omega_Z)=\chi(X, \omega_X)$ and $\deg\varphi_X=2$  and if there exit  constants $C(n)$ such that $\deg\varphi_X\leq C(n)$ for all $X$ of dimension $n\geq 3$. The latter question is particularly interesting because the effective bound $\deg\varphi_X\leq 4$ for surfaces relies heavily on the Miyaoka-Yau inequality that $9\chi(X, \omega_X)\geq \vol(X)$. This inequality fails in higher  dimensions. Indeed, Ein and Lazarsfeld found a threefold $X$, which is of maximal Albanese dimension, of general type, and of  $\chi(X, \omega_X)=0$ (\cite{EL}). 
 
 In this article, we will apply generic vanishing theory (\cite{EL, PP, CJ}) to study the eventual paracanonical maps. The starting point is the following easy lemma.

\begin{lemm}\label{characterization}
The eventual paracanonical map $\varphi_X: X\rightarrow Z$  can be characterized as the map with maximal degree such that  
 \begin{itemize}
\item[(1)] $a_X$ factors birationally through $\varphi_X$;
\item[(2)] there exists a rank $1$ subsheaf $\cL$ of $\varphi_{X*}\omega_X$ such that $h^0(Z, \cL\otimes Q)=h^0(X, \omega_X\otimes Q)$ for $Q\in\Pic^0(X)$ general.
 \end{itemize}
\end{lemm}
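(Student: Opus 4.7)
I plan to prove the two defining properties of this characterization separately: first, verify that $\varphi_X$ itself satisfies (1) and (2); then, show that every generically finite $\varphi': X \to Z'$ satisfying (1) and (2) factors birationally through $\varphi_X$, which forces $\deg \varphi' \leq \deg \varphi_X$ and yields the maximal-degree characterization.

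For the first step, (1) is immediate from the construction recalled in the introduction. For (2) I would work on the \'etale cover $\pi: X^{(d)} \to X$, where $\varphi_d: X^{(d)} \to Z^{(d)}$ is the morphism defined by the complete linear system $|\pi^*\omega_X \otimes P|$ for a general $P \in \Pic^0(A)$. The tautological inclusion $\varphi_d^*\cO_{Z^{(d)}}(1) \hookrightarrow \pi^*\omega_X \otimes P$ pushes forward to a rank-one subsheaf $\cO_{Z^{(d)}}(1) \hookrightarrow \varphi_{d*}(\pi^*\omega_X \otimes P)$ inducing an isomorphism on $H^0$. Using that $\varphi_d$ is $G$-equivariant, I would descend this to a rank-one subsheaf $\cL \subset \varphi_{X*}\omega_X$. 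Combining the character decomposition of $\pi_*\cO_{X^{(d)}}$ with generic vanishing for $\omega_X$, I can match global sections upstairs and downstairs to obtain the equality $h^0(Z, \cL \otimes Q) = h^0(X, \omega_X \otimes Q)$ for general $Q \in \Pic^0(X)$.

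For maximality, suppose $\varphi': X \to Z'$ satisfies (1) and (2) with a rank-one subsheaf $\cL' \subset \varphi'_*\omega_X$. I would form the compatible \'etale cover $Z'^{(d)} \to Z'$ induced by the map $Z' \to A_X$, so that $\varphi'$ lifts to $\varphi'_d: X^{(d)} \to Z'^{(d)}$ and $\cL'$ pulls back to a rank-one subsheaf of $\varphi'_{d*}(\pi^*\omega_X)$. The hypothesis on global sections of $\cL'$ implies, after twisting by a general $P$, that every element of $H^0(X^{(d)}, \pi^*\omega_X \otimes P)$ is accounted for by this rank-one subsheaf. Hence the rational map defined by the complete linear system $|\pi^*\omega_X \otimes P|$, which is precisely $\varphi_d$, factors through $\varphi'_d$. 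Taking the $G$-quotient yields a birational factorization of $\varphi_X$ through $\varphi'$, and therefore $\deg \varphi' \leq \deg \varphi_X$.

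The main technical point I expect to require care is the $G$-equivariant descent in the existence direction and, dually, the compatibility between the two \'etale covers in the maximality step: the rank-one condition must be preserved under pullback and pushforward through these covers, and one must verify that the linear system determined by sections of the rank-one subsheaf genuinely cuts out $\varphi'_d$ up to birational equivalence, rather than merely some factor of it.
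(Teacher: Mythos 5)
Your proposal is correct and follows essentially the same route as the paper: both verify (1) and (2) by descending the tautological bundle $\cO_{Z^{(d)}}(1)$ along the free $G$-action to get $\cL$ and matching sections upstairs and downstairs via the character decomposition, and both prove maximality by showing that the complete paracanonical system on $X^{(d)}$ factors through the induced \'etale cover of any competitor, so that $\varphi_X$ factors through it after taking $G$-quotients. The technical points you flag (descent of the rank-one condition and the fixed-part issue in the linear system) are exactly the ones the paper handles, equally briefly, via the decomposition $|K_{X^{(d)}}\otimes P|=\varphi_d^*|\cL^{(d)}\otimes P|+F$.
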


\begin{proof}
From the construction of $\varphi_X$, as $Z^{(d)}$ is the image of the map induced by the linear system $|K_{X^{(d)}}\otimes P|$, there exists a line bundle $\cL^{(d)}$ on $Z^{(d)}$ and a fixed divisor $F$ on $X^{(d)}$, such that $|K_{X^{(d)}}\otimes P|=\varphi_d^*|\cL^{(d)}\otimes P|+F$, for $P\in\Pic^0(A_X)$ general. Moreover, we have the commutative diagram
\begin{eqnarray*}
\xymatrix{
X^{(d)}\ar[d]^{\pi_X}\ar[r]^{\varphi_{d}} & Z^{(d)}\ar[d]^{\pi_Z}\ar[r] & A_X\ar[d]^{m_d}\\
X\ar[r]^{\varphi_X} & Z\ar[r] & A_X,
}
\end{eqnarray*}
where after birational modifications of $X$ and $Z$, all vertical morphisms are \'etale, and we know that
the line bundle $\cL^{(d)}$ descends to a line bundle $\cL$ on $Z$. As $\cL\hookrightarrow \varphi_{X^*}\omega_X$ and $h^0(X^{(d)}, K_{X^{(d)}}\otimes P)=h^0(Z^{(d)}, \pi_Z^*\cL\otimes P)$ for $P\in\Pic^0(A_X)$ general, we conclude that $h^0(X, K_X\otimes Q)=h^0(Z, \cL\otimes Q)$ for $Q\in\Pic^0(A_X)$ general. 

On the other hand, for any $Y$ satisfies conditions $(1)$ and $(2)$, we see that the map on $X^{(d)}$ induced by $|K_{X^{(d)}}\otimes P|$ factors through the corresponding \'etale cover $Y^{(d)}$ of $Y$ and hence $\varphi_X$ factors through $Y$. Thus, $\varphi_X$ is the map with maximal degree satisfying  conditions $(1)$ and $(2)$.
\end{proof}

\begin{rema}\label{va1}
By the same argument in the proof of Lemma \ref{characterization}, $\varphi_{L, a}: X\rightarrow Z_{L, a}$ can be characterized  as the map with maximal degree such that 
\begin{itemize}
\item[(1)] $a$ factors birationally  through $\varphi_{L, a}$;
\item[(2)] there exists a line bundle $H$ on $Z_{L,a}$ such that $\varphi_{L, a}^*H\hookrightarrow L$ and $h^0(X, L\otimes P)=h^0(Z_{L, a}, H\otimes P)$ for $P\in\Pic^0(A)$ general.
\end{itemize}
\end{rema}
\begin{rema}
By generic vanishing, we know that if $\chi(X, \omega_X)>0$, then $h^0(X, K_X\otimes P)=h^0(X, K_X\otimes P_1)$ for $P\in\Pic^0(A_X)$ and $P_1\in\Pic^0(A)$ general. Thus, by Lemma \ref{characterization} and the universal property of the Albanese morphism, we conclude that $\varphi_{K_X, a}$ always factors through $\varphi_X$.

Lemma \ref{characterization} can be used to give all possible structures of $\varphi_{K_C, a}$ for a smooth projective curve $C$ together with a primitive moprhism $a: C\rightarrow A$.  We may assume that $Z$ is a smooth projective curve with genus $g(Z)\geq 1$. By the canonical splitting,  $(\varphi_{K_C, a})_{*}\omega_C=\omega_Z\oplus\cF$. Note  that $(\varphi_{K_C, a})_{*}\omega_{C/Z}$ is a nef vector bundle (see for instance \cite[Corollary 3.2]{Vie}).

 If $g(Z)>1$, by Riemann-Roch, for each direct summand $\cQ$ of $(\varphi_{K_C, a})_{*}\omega_C$, we have $h^0(Z, \cQ\otimes P)>0$, for $P\in\Pic^0(A)$ general. Thus, Lemma \ref{characterization} implies that $\cF=0$ and hence $\varphi_{K_C, a}$ is birational.
 
If $g(Z)=1$, we may assume that $Z=A$ and $\varphi_{K_C, a}$ is not birational. Since $a$ is primitive, $\cF$ is an ample vector bundle. Then by Lemma \ref{characterization} and Riemann-Roch, we conclude that $\rank\cF=1$ and hence $a$ is a double cover.
\end{rema}

In higher dimensions, in order to apply Lemma \ref{characterization} to study the sturcute of $\varphi_X$. We need further informations about $a_{X*}\omega_X$. By  the decomposition theorem proved in \cite{CJ} (see also \cite{PPS}),  we know that $$a_{X*}\omega_X=\cF^{X}\bigoplus_{p_B:A_X\twoheadrightarrow B}\bigoplus_i(p_B^*\cF_{B,i} \otimes Q_{B,i}),
$$ where $\cF^{X}$ is an M-regular sheaf on $A$, $p_B: A\rightarrow B$ are surjective morphisms between abelian vareities with connected fibers, $\cF_{B,i}$  are M-regular sheaves on $B$, and $Q_{B,i}$ are torsion line bundles on $A_X$.
We can then restate the characterization of the eventual paracanonical maps.
 \begin{prop}
 The eventual paracanonical map $\varphi_X: X\rightarrow Z$  can be characterized as the map with maximal degree such that  
 \begin{itemize}
\item[(1)] $a_X$ factors as $a_X: X\xrightarrow{\varphi_X} Z\xrightarrow{g} A_X$;
\item[(2)] there exists a rank $1$ subsheaf $\cL$ of $\varphi_{X*}\omega_X$ such that $\cF^X\hookrightarrow g_*\cL$.
 \end{itemize}
 \end{prop}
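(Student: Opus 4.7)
The plan is to invoke Lemma \ref{characterization} and translate its cohomological condition (2) into the sheaf-theoretic form stated here, using the Chen-Jiang decomposition $a_{X*}\omega_X = \cF^X \oplus \cG$, where $\cG := \bigoplus_{p_B,i}(p_B^*\cF_{B,i}\otimes Q_{B,i})$ collects the summands pulled back from proper quotients $p_B:A_X\twoheadrightarrow B$. Since the maximal-degree and factorization requirements are identical in the two statements, it suffices to verify, for any factorization $a_X: X\xrightarrow{\varphi_X} Z \xrightarrow{g} A_X$, that the existence of a rank-$1$ subsheaf $\cL\subset \varphi_{X*}\omega_X$ satisfying Lemma \ref{characterization}(2) is equivalent to the existence of one with $\cF^X\hookrightarrow g_*\cL$.

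For the easy direction, an injection $\cF^X\hookrightarrow g_*\cL \hookrightarrow a_{X*}\omega_X$ sandwiches $h^0(Z,\cL\otimes Q) = h^0(A_X, g_*\cL\otimes Q)$ for general $Q\in\Pic^0(A_X)$ between
\begin{eqnarray*}
\chi(\cF^X) = h^0(A_X,\cF^X\otimes Q) \;\leq\; h^0(A_X,g_*\cL\otimes Q) \;\leq\; h^0(X,\omega_X\otimes Q) = \chi(\cF^X),
\end{eqnarray*}
using M-regularity of $\cF^X$ on the left and the generic vanishing $h^0((p_B^*\cF_{B,i}\otimes Q_{B,i})\otimes Q) = 0$ for the non-M-regular summands on the right; forcing equality throughout yields Lemma \ref{characterization}(2).

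For the converse, I would set $\cM := g_*\cL \subset a_{X*}\omega_X$ and $\cM_0 := \cM \cap \cF^X$ inside $a_{X*}\omega_X$. By the isomorphism theorem, $\cM/\cM_0$ embeds into $\cG$, whence $h^0((\cM/\cM_0)\otimes Q) = 0$ for generic $Q$, and the short exact sequence $0\to\cM_0\to\cM\to\cM/\cM_0\to 0$ gives $h^0(\cM_0\otimes Q) \geq h^0(\cM\otimes Q) = \chi(\cF^X)$; combined with $\cM_0\subset \cF^X$, this forces the inclusion $\cM_0\hookrightarrow \cF^X$ to induce an isomorphism on $H^0(\cdot\otimes Q)$ for every $Q$ in a nonempty open $U\subset \Pic^0(A_X)$. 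The decisive step invokes Pareschi-Popa's theorem that M-regular sheaves are continuously globally generated: the surjection $\bigoplus_{Q\in U} H^0(\cF^X\otimes Q)\otimes Q^{-1}\twoheadrightarrow \cF^X$ then factors through $\cM_0$, so $\cM_0 = \cF^X$ and $\cF^X\hookrightarrow g_*\cL$. The main obstacle I anticipate is precisely this final passage from a numerical equality of generic cohomology to a genuine sheaf inclusion; continuous global generation of the M-regular part is the essential input that converts Lemma \ref{characterization}(2) from a cohomological coincidence into an honest subsheaf statement.
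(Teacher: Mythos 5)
Your proposal is correct and follows essentially the same route as the paper: the paper's actual argument (given in the proof of Lemma \ref{line}) also reduces everything to the fact that the continuous evaluation map of $a_{X*}\omega_X$ has image exactly $\cF^X$ (Pareschi--Popa) and that this image must land in $g_*\cL$ because $\cL$ carries all sections of $\omega_X\otimes Q$ for general $Q$. Your detour through $\cM_0=g_*\cL\cap\cF^X$ is just a mild repackaging of that same continuous-global-generation input.
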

By this proposition, we can give a complete description of the eventual paracanonical maps of surfaces.
 \begin{theo}\label{surface}Let $X$ be a smooth projetive surface of maximal Albanese dimension and $\chi(X, \omega_X)>0$. One of the following occurs
 \begin{itemize}
 \item[(1)] $\varphi_X$ is birational;
 \item[(2)] $\deg\varphi_X=2$ and $\chi(Z, \omega_Z)=0$;
 \item[(3)] $\varphi_X$ is a birational $(\mathbb{Z}/2\mathbb{Z})^2$-cover and $Z$ is birational to $A_X$.
 \end{itemize}
 In particular, if $\deg\varphi_X\geq 2$, the eventual paracanonical image $Z$ can not be of general type.
 \end{theo}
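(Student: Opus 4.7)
The plan is to combine the bounds (A) and (B) of Barja--Pardini--Stoppino with the characterization of $\varphi_X$ in the preceding Proposition, and to analyse each possible value of $\deg\varphi_X \in \{1,2,3,4\}$ separately. The case $\deg\varphi_X = 1$ gives alternative (1) immediately, so the work lies in ruling out everything outside (2) and (3).

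For $\deg\varphi_X = 2$ the cover is automatically Galois, so $\varphi_{X*}\omega_X = \omega_Z \oplus M$ with $M$ a rank one anti-invariant sheaf on $Z$. The delicate point is to rule out $\chi(Z,\omega_Z) > 0$. Under that assumption, matching Euler characteristics forces the rank one subsheaf $\cL$ of the Proposition to be $\omega_Z$ and $h^0(Z, M \otimes P) = 0$ for generic $P \in \Pic^0(Z)$. The embedding $\cF^X \hookrightarrow g_*\omega_Z$ together with the decomposition theorem of \cite{CJ} applied to $g_*\omega_Z$, and the vanishing of $\Hom$ from an M-regular sheaf to any pullback from a proper quotient of $A_X$, produce an embedding of $\cF^X$ into the M-regular summand of $g_*\omega_Z$ with matching $\chi$. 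Writing $M = \omega_Z \otimes L$ with $L^{\otimes 2} = \cO_Z(B)$ for the branch data of the double cover, the Green--Lazarsfeld description of $V^0(\omega_Z \otimes L)$ then extracts a fibration $Z \to B_0$ onto a lower-dimensional abelian quotient, which I expect to contradict $\chi(Z,\omega_Z) > 0$. Once $\chi(Z,\omega_Z) = 0$ is established, a variant of the same fibration analysis shows that $Z$ cannot be of general type, yielding alternative (2) together with part of the ``in particular'' conclusion.

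For $\deg\varphi_X = 3$, result (B) forces $\chi(Z,\omega_Z) = 0$. Pass to the Galois closure of $\varphi_X$, an $S_3$-cover, and decompose its pushforward of the canonical sheaf into isotypic components of ranks $1$ and $2$; comparing the characterization $\cF^X \hookrightarrow g_*\cL$ against these components and exploiting the maximality of $\deg\varphi_X$ in Lemma \ref{characterization} excludes this degree. For $\deg\varphi_X = 4$, again $\chi(Z,\omega_Z) = 0$. I would first show $\deg g = 1$, i.e.\ $Z$ is birational to $A_X$: combining the rank one constraint on $\cL$ inside the rank four sheaf $\varphi_{X*}\omega_X$ with the Miyaoka--Yau bound $9\chi(X,\omega_X) \ge \vol(X)$ used in \cite{BPS2}, any $\deg g \ge 2$ produces an incompatible lower bound on $\vol(X)$. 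With $Z$ birational to $A_X$, the Galois group of (the Galois closure of) $\varphi_X$ must be $(\mathbb{Z}/2\mathbb{Z})^2$: a $\mathbb{Z}/4\mathbb{Z}$ or a non-Galois degree four structure cannot accommodate a rank one subsheaf of $\varphi_{X*}\omega_X$ pushing forward to contain the M-regular part $\cF^X$. This is alternative (3), already realised by the BPS example.

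The main obstacle is the $\deg\varphi_X = 2$ step, which is precisely the surface part of \cite[Question 5.6]{BPS2}. The essence is to translate the generic vanishing $h^0(Z, M \otimes P) = 0$ into a concrete geometric fibration on $Z$ via the Chen--Jiang decomposition from \cite{CJ}; this is what replaces the purely numerical Miyaoka--Yau argument of \cite{BPS2} and is what lets us decide the question in negative.
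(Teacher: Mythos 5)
Your overall skeleton (case analysis on $\deg\varphi_X\in\{1,2,3,4\}$ using the BPS bounds and the embedding $\cF^X\hookrightarrow g_*\cL$) matches the paper's strategy, but the two steps you flag as delicate are exactly where the proposal breaks down. For $\deg\varphi_X=2$ with $\chi(Z,\omega_Z)>0$, extracting a fibration $Z\rightarrow B_0$ onto a lower-dimensional abelian quotient from $V^0(\omega_Z\otimes L)$ does \emph{not} contradict $\chi(Z,\omega_Z)>0$: a product of two bielliptic curves is of general type, has $\chi>0$, and fibers over two elliptic curves, so the mere existence of such a fibration proves nothing. The paper's Proposition \ref{technical} has to work much harder: it first excludes fibrations of $Z$ onto curves of genus $\geq 2$ (Fujita nefness plus Riemann--Roch), then shows $\cC_{\tilde{\cQ}}$ consists of exactly two elliptic quotients --- the exclusion of a third uses the multiplicative structure of $a_{1*}\cO$ for an auxiliary $(\mathbb{Z}/2\mathbb{Z})^3$-cover to show $\cL_{\chi_1\chi_2\chi_3}$ would be M-regular --- and concludes that $\deg(Z/A_X)=2$ with $X,Z$ étale covers of $C_1\times C_2$ and its quotient. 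The contradiction is then obtained in Proposition \ref{non-gt} by comparing with the eventual paracanonical map of $Z$ itself, which must be birational (since $h^0(K_Z\otimes Q)=h^0(K_X\otimes Q)$ and $\varphi_X$ has maximal degree) yet is birationally equivalent to the degree-two map $g$ by \cite[Lemma 5.1]{BPS2}. None of this structure theory is supplied by your sketch.

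The second gap is your reduction to $Z$ birational to $A_X$ when $\deg\varphi_X=4$. The Severi inequality gives $\vol(X)\geq\deg\varphi_X\cdot\vol(\cL)\geq 2(\deg\varphi_X)\chi(X,\omega_X)=8\chi(X,\omega_X)$, which is compatible with Miyaoka--Yau $9\chi\geq\vol(X)$; so the claimed ``incompatible lower bound on $\vol(X)$'' for $\deg g\geq 2$ does not materialize. The paper instead observes that $\chi(Z,\omega_Z)=0$ forces $\kappa(Z)\leq 1$, and rules out $\kappa(Z)=1$ for $\deg\varphi_X>2$ by a genuinely different mechanism (Lemma \ref{image}): on the elliptic fibration $\tilde{Z}\rightarrow C$ over a genus $\geq 2$ curve, Koll\'ar's theorem $\omega_{C'}=R^1q_{C'*}\omega_{\tilde{X}}$ yields $t_*\omega_{C'}\supseteq\omega_C\oplus\cM_C$, hence $\deg t\geq k-1$, while general type of the fibers gives $k\geq 2\deg t$, forcing $k\leq 2$. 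This same rank bookkeeping (the non-M-regular remainder $\cM$ of $a_{X*}\omega_X$ must split as pullbacks of \emph{line} bundles from \emph{distinct} elliptic curves, by divisibility of degrees in Stein factorizations) is also what kills $\deg\varphi_X=3$ and pins down the $(\mathbb{Z}/2\mathbb{Z})^2$ structure in degree $4$; your $S_3$-Galois-closure alternative for degree $3$ is not developed far enough to judge, but as written it does not identify the contradiction.
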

 
The structures of the eventual paracanonical maps in higher dimensions are much more complicated. One of the reason is that $A_X$ could admit many fibrations  in higher dimensions and hence the above decomposition formula becomes quite complicated.   Nevertheless,  we manage to  understand the eventual paracanonical maps in dimension $3$.  
 \begin{theo}\label{threefold}
 Let $X$ be a smooth projetive threefold of maximal Albanese dimension and $\chi(X, \omega_X)>0$. Then
 \begin{itemize}
 \item[(1)] $\deg\varphi_X\leq 8$ and when the equality holds, $\varphi_X$ is birationally a  $(\mathbb{Z}/2\mathbb{Z})^3$-cover and $Z$ is birational to the Abelian threefold $A_X$;
 \item[(2)] if $\chi(Z, \omega_Z)=\chi(X, \omega_X)>0$ and $\deg\varphi_X\geq 2$, then $\varphi_X$  is a  birational $G$-cover, where $G=\mathbb{Z}/3\mathbb{Z}$, or $\mathbb{Z}/2\mathbb{Z}$, or $(\mathbb{Z}/2\mathbb{Z})^2$. 
 \end{itemize}
 \end{theo}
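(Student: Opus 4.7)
The plan is to combine the proposition above (which characterizes $\varphi_X$ via a rank $1$ subsheaf $\cL\subset\varphi_{X*}\omega_X$ with $\cF^X\hookrightarrow g_*\cL$) with the decomposition theorem of \cite{CJ} applied now to $g_*\omega_Z$. After birational modifications, $g:Z\to A_X$ is a finite morphism of degree $d=\deg\varphi_X$ between smooth threefolds, and the decomposition theorem gives
\[
g_*\omega_Z\;=\;\cF^Z\;\oplus\;\bigoplus_{p_B:A_X\twoheadrightarrow B}\bigoplus_i\bigl(p_B^*\cF^Z_{B,i}\otimes Q^Z_{B,i}\bigr),
\]
with $\dim B\in\{1,2\}$. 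The canonical trace splits $\varphi_{X*}\omega_X=\omega_Z\oplus\cK$, and the requirement that $g_*\cL$ contain the full M-regular piece $\cF^X$ of $a_{X*}\omega_X$ pins down $\cL$ as essentially the unique rank $1$ subsheaf covering $\cF^X$.

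For part (1), I would pass to a birational model of the Galois closure of $\varphi_X$: let $\tilde X\to Z$ be Galois with group $G$, so that $\tilde g:\tilde X\to A_X$ is a Galois cover, and decompose $\tilde g_*\omega_{\tilde X}=\bigoplus_\chi \cL_\chi$ into rank $1$ $G$-eigensheaves indexed by characters $\chi$ of $G$. Applying the decomposition theorem to each $\cL_\chi$, it is either M-regular or of the form $p_B^*\cR_\chi\otimes Q_\chi$ pulled back from a proper quotient. The M-regular constraint $\cF^X\hookrightarrow g_*\cL$ forces exactly one character to carry the M-regular content, while all other characters must contribute distinct torsion line bundles pulled back from proper quotients $p_B:A_X\twoheadrightarrow B$. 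Since $A_X$ has dimension $3$, the resulting character subgroup of $\Pic^0(A_X)$ is $2$-torsion of order at most $2^3=8$, giving $d\le 8$; equality forces $G=(\Z/2\Z)^3$ and $Z\bir A_X$.

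For part (2), the hypothesis $\chi(Z,\omega_Z)=\chi(X,\omega_X)>0$ forces $h^0(A_X,\cF^Z\otimes P)=h^0(A_X,\cF^X\otimes P)$ generically, so the composition $\cF^X\hookrightarrow g_*\omega_Z\twoheadrightarrow \cF^Z$ is a generic isomorphism of M-regular sheaves. This severely restricts the Galois closure: there is no room for a second M-regular character, and the non-trivial characters must all contribute torsion line bundles whose combined contribution is compatible with $\deg\varphi_X\ge 2$ and with the positivity of $\chi(Z,\omega_Z)$. The remaining possibilities collapse to $G\in\{\Z/3\Z,\Z/2\Z,(\Z/2\Z)^2\}$, and the constraint forces $\varphi_X$ itself (not merely its Galois closure) to be Galois. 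The main obstacle will be this combinatorial analysis of character data in dimension $3$: one must rule out all non-abelian Galois closures, match each non-trivial character to a torsion line bundle pulled back from a suitable quotient $p_B$, and handle the delicate interaction between fibrations to elliptic curves and to abelian surfaces in the decompositions of both $a_{X*}\omega_X$ and $g_*\omega_Z$.
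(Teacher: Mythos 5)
Your overall framing (a rank-one subsheaf $\cL$ with $\cF^X\hookrightarrow g_*\cL$, plus the decomposition theorem applied to $g_*\omega_Z$) is the right starting point, but the central mechanism you propose for part (1) does not work. First, the Galois closure $\tilde X\to Z$ of $\varphi_X$ is Galois over $Z$, not over $A_X$, so $\tilde g:\tilde X\to A_X$ is in general not Galois, and $\tilde g_*\omega_{\tilde X}$ does not decompose into rank-one character eigensheaves (that decomposition would also require the group to be abelian, which you have not established). Second, even granting an abelian Galois structure over $A_X$, the non-M-regular summands have the form $p_B^*\cF_{B,i}\otimes Q_{B,i}$ with $Q_{B,i}$ torsion line bundles: there is no reason these are $2$-torsion, no reason they are pairwise distinct, and no bound of the form $2^{\dim A_X}$ on their number (the $2$-torsion subgroup of $\Pic^0$ of an abelian threefold has order $2^6$, not $2^3$, in any case). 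So $\deg\varphi_X\le 8$ does not follow from your character count. In the paper the bound $8$ arises only in the case $Z$ birational to $A_X$ (Proposition \ref{deg8}): one produces an elliptic quotient $p_{B_1}$ with $X\to C_{B_1}\to B_1$ a double cover and then bounds the degree of a general fiber $X_c\to A_c$ over the abelian surface $A_c$ by $4$ using the surface classification, which ultimately rests on the Miyaoka--Yau inequality via \cite{BPS2}. In all other cases ($\tilde q(Z)>3$, property $\C1$, $\kappa(Z)=2$ or $3$) the paper proves the stronger bounds $4$, $5$ or $6$ by separate arguments, and this is precisely what forces $Z$ birational to $A_X$ at equality; your proposal gives no route to that conclusion.

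For part (2) you essentially assert the answer. The paper's argument (Theorem \ref{=}) occupies several pages and uses: the rank identities obtained by comparing $\deg a_X$ with $\sum_{p_B\in\cC_{\cQ}}(1+\rank\cF_{B}^Z)$; the construction of (minimal) representatives from pairs $p_{B_1},p_{B_2}\in\cC_{\cQ}$; birational isotriviality of the induced fibrations via \cite{BBG} and \cite{DJLS}; the arithmetic constraint $\tfrac1{g_1}+\tfrac1{g_2}+\tfrac1{g_3}\ge 1$ on the orders of the curve covers $C_i\to E_i$; the structure of Ein--Lazarsfeld threefolds from \cite{CDJ}; and Pardini's building data for $(\mathbb{Z}/2\mathbb{Z})^k$-covers to rule out too many elements of $\cC_{\cQ}$. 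None of this is visible in your outline, and ``matching each non-trivial character to a torsion line bundle'' is not the shape of the argument: the combinatorics live in the set $\cC_{\cQ}$ of quotients $A_X\twoheadrightarrow B$ and the ranks of the sheaves $\cF_B^Z$, not in torsion characters of $\Pic^0(A_X)$.
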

 
 Note that Theorem \ref{surface} and Theorem \ref{threefold} answer partially the question raised by Barja, Pardini, and Stoppino.

 \addtocontents{toc}{\SkipTocEntry}
\subsection*{Acknowledgements}
We thank Miguel \'{A}ngel Barja,   Mart\'{\i} Lahoz, and Zhiyu Tian for useful conversations and comments.

 \section{Notations and Preliminaries}
 
We work over complex number field $\mathbb{C}$. A projective variety is a reduced irreducible projective scheme. Let $G$ be a finite group. We call a generically finite morphism $f: X\rightarrow Y$ between two smooth varieties a {\em birational $G$-cover} if  $G$ is a subgroup of the birational automorphism group $\mathrm{Bir}(X)$ of $X$ and $f$ is birational equivalent to the quotient $X\dashrightarrow X/G$. In other words, $f$ is a birational $G$-cover if the function field extension $\mathbb{C}(X)/\mathbb{C}(Y)$ is a Galois extension with Galois group $G$.  Note that if $f$ is a birational $G$-cover, after birational modifications of $X$, we may assume that $G$ acts faithfully on $X$.  \\

Let $A$ be an abelian variety. We will always denote by $\PA$ the dual abelian variety $\Pic^0(A)$.\\ 

A morphism $a: X\rightarrow A$ from a smooth projective variety to an abelian variety is said to be primitive if for any non-trivial $P\in \PA$, $a^*P$ is still non-trivial. The Albanese morphism $a_X: X\rightarrow A_X$ is always primitive. Let $A_X\rightarrow B$ be a fibration between abelian varieties. The induced morphism $X\rightarrow B$ is primitive.  

 We say $a$ is generating if $a(X)$ generates $A$. Assume $Y$ is a smooth projective variety and $A\rightarrow A_Y$ is an \'etale cover. Then the induced morphism $X:=Y\times_{A_Y}A\rightarrow A$ may not be the Albanese morphism of $X$ but is always primitive and generating. In this article, a morphism to an abelian variety is always primitive and generating, unless otherwise stated.\\
 
 Let $X$ be a smooth projective variety and let $a: X\rightarrow A$ be a morphism to an abelian variety. For a   cohorent sheaf $\cQ$ on $X$, define $h_{a}^0(\cQ)$ to be $\dim H^0(X, \cQ\otimes Q)$ for $Q\in \Pic^0(A)$ general. In particular, if $a$ is generically finite onto its image, by generic vanishing theory (see \cite{GL}), $h_{a}^0(\omega_X)=\chi(X, \omega_X)$.\\

Let $f: X\rightarrow Y$ be a generically finite and surjective morphism. We   denote by $\deg f$ or simply $\deg(X/Y)$ the degree of $f$. We say that $f$ is {\em minimal} if the field extension $\mathbb{C}(X)/\mathbb{C}(Y)$ is minimal, namely there is no intermediate field extension between  $\mathbb{C}(X)$ and $\mathbb{C}(Y)$.\\

Let $f: X\rightarrow Y$ be a generically finite  and surjective morphism between smooth projective varieties. Let $a: Y\rightarrow A$ be a morphism to an abelian variety such that $Y$ is generically finite  onto its image  and $a\circ f: X\rightarrow A$ is primitive and generating. We call the morphisms  $X_1\xrightarrow{f_1} Y_1\xrightarrow{a_1}A_1$ between smooth varieties {\em  a sub-representative  of $X\xrightarrow{f} Y\xrightarrow{a}A$} if $a_1\circ f_1$ is a primitive and generating morphism to the abelian variety $A_1$ and there are generically finite and surjective  morphisms  $t_X: X\rightarrow X_1$,   $t_Y: Y\rightarrow Y_1$ and $t: A\rightarrow A_1$ with a commutative diagram:
\begin{eqnarray}\label{subrepresentative}
\xymatrix{
X\ar[d]^{t_X}\ar[r]^f & Y\ar[d]^{t_Y}\ar[r]^{a} & A\ar[d]_t^{\mathrm{isogeny}}\\
X_1\ar[r]^{f_1} & Y_1\ar[r]^{a_1} & A_1}
\end{eqnarray}
such that the main component of $X_1\times_{Y_1}Y$ is irreducible and hence dominated by $X$. We often says that $a_1\circ f_1$ is a sub-representative of $a\circ f$.

Furthermore, we call  $a_1\circ f_1$  {\em  a representative of $a\circ f$} if $X$ is moreover birational to the main component of $X_1\times_{Y_1}Y$.

If $\chi(X, \omega_X)>0$, we call $a_1\circ f_1$ {\em a minimal representative } of $a\circ f$ if $a_1\circ f_1$ is a representative of $a\circ f$, $X_1$ is of general type, and $\deg(X_1/a_1\circ f_{1}(X_1))$ is minimal among all representatives.\\

 \begin{lemm}\label{representative} Let $a_1\circ f_1$ be a sub-representative of $a\circ f$ as in (\ref{subrepresentative}). Consider the canonical splittings $f_*\omega_X=\omega_Y\oplus\cQ$ and $f_{1*}\omega_{X_1}=\omega_{Y_1}\oplus \cQ_1$. Then $\cQ_1$ is a natural sub-sheaf of $t_{Y*}\cQ$.
\end{lemm}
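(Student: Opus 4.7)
The idea is to build a canonical morphism $\phi \colon \cQ_1 \to t_{Y*}\cQ$ out of the trace splittings and then verify its injectivity at the generic point of $Y_1$.

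For the construction, the canonical trace inclusion $\omega_{X_1} \hookrightarrow t_{X*}\omega_X$ pushes forward via $f_{1*}$ to
$$\alpha := f_{1*}\iota_{t_X} \colon f_{1*}\omega_{X_1} \hookrightarrow f_{1*}t_{X*}\omega_X = t_{Y*}f_*\omega_X,$$
where the identification uses the commutativity $f_1 \circ t_X = t_Y \circ f$. Pushing forward the splitting $f_*\omega_X = \omega_Y \oplus \cQ$ by $t_Y$ yields $t_{Y*}f_*\omega_X = t_{Y*}\omega_Y \oplus t_{Y*}\cQ$, with projection $\mathrm{pr}$ onto the second summand. Define
$$\phi \colon \cQ_1 \hookrightarrow f_{1*}\omega_{X_1} \xrightarrow{\alpha} t_{Y*}f_*\omega_X \xrightarrow{\mathrm{pr}} t_{Y*}\cQ.$$
Functoriality of the trace implies that the two paths from $\omega_{Y_1}$ to $t_{Y*}f_*\omega_X$ (through $f_{1*}\omega_{X_1}$ or through $t_{Y*}\omega_Y$) both realise the canonical inclusion associated with $\pi := t_Y \circ f = f_1 \circ t_X$; hence $\alpha(\omega_{Y_1}) \subseteq t_{Y*}\omega_Y$, so $\mathrm{pr} \circ \alpha$ kills $\omega_{Y_1}$ and $\phi$ restricts consistently from the $\cQ_1$-summand.

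For injectivity, $\cQ_1$ is torsion-free as a direct summand of $f_{1*}\omega_{X_1}$, so it suffices to check injectivity at the generic point $\eta_1$ of $Y_1$. Writing $K_1, K, N, M$ for the function fields of $Y_1, Y, X_1, X$ and fixing generators $\omega_{K_1}, \omega_K, \omega_N, \omega_M$ of the one-dimensional top-form spaces, an explicit calculation using the relations $f^*\omega_K = a\omega_M$, $t_X^*\omega_N = b\omega_M$, $f_1^*\omega_{K_1} = c_1\omega_N$, $t_Y^*\omega_{K_1} = c_Y\omega_K$ (with $ac_Y = bc_1$), together with the formula for the normalised trace splitting, shows that the kernel of $\mathrm{pr} \circ \alpha$ at $\eta_1$ is the $K_1$-subspace $c_1(K \cap N) \omega_N \subseteq f_{1*}\omega_{X_1}|_{\eta_1}$. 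Provided $K \cap N = K_1$, this kernel equals the image of $\omega_{Y_1}|_{\eta_1}$, which meets $\cQ_1|_{\eta_1}$ trivially; hence $\ker(\phi)|_{\eta_1} = 0$ and $\phi$ is injective.

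The main obstacle is justifying the linear disjointness $K \cap N = K_1$ inside $M$. This is where the sub-representative hypothesis enters essentially: because $t \colon A \to A_1$ is an isogeny (hence \'etale) and the main component of $X_1 \times_{Y_1} Y$ is required to be irreducible, generically $Y$ is birational to the pullback $Y_1 \times_{A_1} A$ and the main component of $X_1 \times_{Y_1} Y$ is birational to $X_1 \times_{A_1} A$, so that $K = K_1 \otimes_{k(A_1)} k(A)$ and $M \supseteq N \otimes_{k(A_1)} k(A)$, inside which $K$ and $N$ are linearly disjoint over $K_1$.
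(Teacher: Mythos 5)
Your route is genuinely different from the paper's. The paper first reduces to the case of a representative, passes to the Galois closure $\hat X_1$ of $\mathbb{C}(X_1)/\mathbb{C}(Y_1)$ with group $G$ and subgroup $H$, identifies $\cQ$ and $\cQ_1$ with the non-trivial isotypic parts of the $G$-sheaves $g_*\omega_{\hat X}$ and $h_*\omega_{\hat X_1}$, produces the inclusion from $K_{\hat X/\hat X_1}$, and then takes $H$-invariants. Your argument avoids the Galois closure entirely: you build $\phi=\mathrm{pr}\circ\alpha|_{\cQ_1}$ from the trace inclusion $\omega_{X_1}\hookrightarrow t_{X*}\omega_X$ and the pushed-forward splitting, and check injectivity at the generic point. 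The construction of $\phi$, the observation that $\alpha(\omega_{Y_1})\subseteq t_{Y*}\omega_Y$, the reduction of injectivity to the generic point via torsion-freeness, and the computation identifying $\ker(\mathrm{pr}\circ\alpha)|_{\eta_1}$ with $c_1(K\cap N)\omega_N$ are all correct (with the compatibility $ac_Y=bc_1$), and this is arguably more elementary than the paper's equivariant argument. Both proofs ultimately hinge on the same field-theoretic input $\mathbb{C}(X_1)\cap\mathbb{C}(Y)=\mathbb{C}(Y_1)$, which the paper also invokes (after reducing to a representative).

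There is, however, a genuine flaw in how you justify that input. You claim that, because $t\colon A\to A_1$ is an isogeny, ``generically $Y$ is birational to the pullback $Y_1\times_{A_1}A$.'' Nothing in the definition of a sub-representative forces this: $t_Y\colon Y\to Y_1$ is only required to be generically finite, surjective, and compatible with the isogeny, and the lemma is applied in the paper to situations where $Y$ is not a base change of $Y_1$ along $t$ (already the case $A=A_1$, $t=\mathrm{id}$, with $t_Y$ an arbitrary cover, is permitted). So your derivation of the linear disjointness of $K$ and $N$ over $K_1$ does not go through as written. The fix is short and uses the hypothesis you actually have: the requirement that $X_1\times_{Y_1}Y$ have a unique main component means that its localization at the generic point of $Y$, namely the \'etale $K$-algebra $N\otimes_{K_1}K$, has a single minimal prime; being reduced in characteristic zero it is then a field, so the natural surjection $N\otimes_{K_1}K\to NK\subseteq M$ is an isomorphism, which is exactly linear disjointness and in particular gives $K\cap N=K_1$. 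With that substitution your proof is complete.
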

 \begin{proof}
 We may assume that  $a_1\circ f_1$ is a representative of $a\circ f$ , otherwise we may replace $X$ by a smooth model of the main component of $X_1\times_{Y_1}Y$. 
 
 Let $\hat{X}_1$ be the normalization of $X_1$ in the Galois hull of $\mathbb{C}(X_1)/\mathbb{C}(Y_1)$. after birational modifications, we  assume that $\hat{X}_1\rightarrow Y_1$ is a birational $G$-cover and $\hat{X}_1\rightarrow X_1$ is a birational $H$-cover, where $H$ is a subgroup of $G$.   
 
 Since $f_1$ is a representative, the intersection of $\mathbb{C}(X_1)\cap\mathbb{C}(Y)=\mathbb{C}(Y_1)$, as subfields of $\mathbb{C}(X)$. Then by Galois theory, $\deg(\mathbb{C}(\hat{X}_1)\mathbb{C}(X)/\mathbb{C}(X))=\deg(\mathbb{C}(\hat{X}_1)/\mathbb{C}(X_1))$. Hence 
 $\hat{X}_1\times_{X_1}X$ has only one main component. Let $\hat{X}$ be  a smooth model of this main component. Then $\hat{X}\rightarrow Y$ is a birational $G$-cover and $\hat{X}\rightarrow X$ is a birational $H$-cover. We may also assume that $G$ acts faithfully on $\hat{X}$.
 
We may replace $X$ and $X_1$  by $\hat{X}/H$ and $\hat{X}_1/H$ respectively and replace $Y$ and $Y_1$ by $\hat{X}/G$ and $\hat{X}_1/G$, because these quotients have only finite quotient singularities and hence have rational singularties. 
Then we have 
\begin{eqnarray*}
\xymatrix{
\hat{X}\ar@/^2pc/[rr]^g\ar[r]\ar[d]^{\hat{t}} & X\ar[d]\ar[r]^f & Y\ar[d]^{t_Y}\\
\hat{X}_1\ar@/_2pc/[rr]^h\ar[r] & X_1\ar[r]^{f_1} & Y_1.}
\end{eqnarray*}

Consider the canonical splittings $g_*\omega_{\hat{X}}=\omega_Y\oplus \hat{\cQ}$ and $g_*\omega_{\hat{X}_1}=\omega_{Y_1}\oplus \hat{\cQ}_1$. Then $\hat{\cQ}$  (resp.  $\hat{\cQ}_1$) is just the direct sum of the non-trivial representations of the $G$-sheaf $g_*\omega_{\hat{X}}$ (resp. $g_*\omega_{\hat{X}_1}$). Hence $K_{\hat{X}/\hat{X}_1}$ gives us a natural inclusion $\hat{\cQ}_1\hookrightarrow t_{Y*}\hat{\cQ}$. We then take the $H$-invariant part of these two sheaves and get $\cQ_1\hookrightarrow t_{Y*}\cQ$.  We finish the proof of this lemma.

 \end{proof}

Assume that $f: Y\rightarrow Z$ is a generically finite and surjective morphism between smooth varieties, $\cM$ is a direct summand of $f_*\omega_Y$, and $g: Z\rightarrow A$ is a generically finite morphism onto its image.  Recall the {\em decomposition theorem} \cite[Theorem 1.1]{CJ}: 
\begin{eqnarray}\label{decomposition}g_*\cM=\cF^{\cM}\bigoplus_{p_B:A_X\twoheadrightarrow B}\bigoplus_i(p_B^*\cF_{B,i}^{\cM}\otimes Q_{B,i}),
\end{eqnarray} where $\cF^{\cM}$ is an M-regular sheaf on $A$, $p_B: A\rightarrow B$ are surjective morphisms between abelian vareities with connected fibers, $\cF_{B,i}^{\cM}$ are M-regular sheaves on $B$, and $Q_{B,i}$ are torsion line bundles on $A$.

We denote by $\cC_{\cM}$ the set of quotients $p_B: A\rightarrow B$   such that some $\cF_{B,i}^{\cM}$ is nontrivial.  If $\cM=\omega_Z$, we will simply denote by $\cF^Z$ the M-regular part and similarly $\cF_{B,i}^{Z}=\cF_{B,i}^{\omega_Z}$ and $\cC_{Z}=\cC_{\omega_Z}$.

Considering the eventual paracanonical map $a_X: X\xrightarrow{\varphi_X}Z\xrightarrow{g} A_X $, we always write $\varphi_{X*}\omega_X=\omega_Z\oplus\cQ$. Then if $\chi(X, \omega_X)=\chi(Z, \omega_Z)$, we have $\cF^Z=\cF^X$ and if $\chi(Z, \omega_Z)=0$, we have $\cF^Z=0$ and $\cF^{\cQ}=\cF^X$.\\

For an  \'etale cover $\pi: \tilde{A}_X\rightarrow A_X$, we  denote by $\tilde{a}_X: \tilde{X}\xrightarrow{\tilde{\varphi}_X} \tilde{Z}\xrightarrow{\tilde{g}} \tilde{A}_X$ the corresponding base change. In order to simplify notations, we will still denote by $p_B$ the Stein factorization of $\tilde{A}_X\rightarrow B$.

We always consider the following commutative diagram:
\begin{eqnarray}\label{setup}
\xymatrix{
\tilde{X}\ar[r]\ar[d] & \tilde{Z}\ar[r]\ar[d] & \tilde{A}_X\ar[d]^{p_B}\\
\tilde{X}_B\ar@/_2pc/[rr]^{\tilde{a}_B}\ar[r]^{\tilde{\varphi}_B} & \tilde{Z}_B\ar[r]^{\tilde{g}_B} & B,}
\end{eqnarray}

where we may assume all varieties are smooth projective after birational modifications, the vertical morphisms are fibrations, and the horizontal morphisms are generically finite onto its images.  

Recall from \cite[Lemma 3.7]{CJ} that, if  $\pi^*Q_{C,i}$ is trivial for all $p_C\in\cC_X$ and $i$,  then 
\begin{eqnarray}\label{explanation}\tilde{a}_{B*}\omega_{\tilde{X}_B}=\bigoplus_{\scriptstyle p_C\in\cC_X  \atop \scriptstyle \PC\subset\PB }\bigoplus_ip_{B,C}^*\cF_{C,i}^X=\tilde{g}_{B*}\omega_{\tilde{Z}_B}\bigoplus_{\scriptstyle p_C\in\cC_{\cQ}  \atop \scriptstyle \PC\subset\PB } \bigoplus_{i}p_{B,C}^*\cF_{B,i}^{\cQ},
\end{eqnarray}
where $p_{B,C}: B\rightarrow C$ is the natural quotient.

Hence $p_B\in\cC_X$ if and only if for some finite abelian \'etale cover $\tilde{X}\rightarrow X$,  $\chi(\tilde{X}_B, \omega_{\tilde{X}_B})>0$. Similarly, $p_B\in\cC_{\cQ}$ if and only if for some finite abelian \'etale cover $\tilde{X}\rightarrow X$,  $\chi(\tilde{X}_B, \omega_{\tilde{X}_B})>\chi(\tilde{Z}_B, \omega_{\tilde{Z}_B})$.

\begin{lemm}\label{line}
Let $\varphi_X: X\rightarrow Z $ be the eventual paracanonical map of $X$. Then after birational modifications of $X$ and $Z$, there exists a line bundle $\cL$ on $Z$ such that $\cL$ is a subsheaf of $\varphi_{X*}\omega_X$ and $\cF^X$ is naturally a subsheaf of $g_*\cL$.
\end{lemm}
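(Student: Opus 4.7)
The plan is to begin from the line bundle $\cL$ extracted in the proof of Lemma~\ref{characterization}: after the birational modifications of $X$ and $Z$ carried out there, $\cL\hookrightarrow\varphi_{X*}\omega_X$ and the generic $H^0$-equality $h^0(Z,\cL\otimes g^*P)=h^0(X,\omega_X\otimes a_X^*P)$ holds for $P\in\Pic^0(A_X)$ general. Pushing forward via $g$ yields $g_*\cL\hookrightarrow a_{X*}\omega_X$, and the decomposition theorem \eqref{decomposition} gives $a_{X*}\omega_X=\cF^X\oplus R$, where $R$ collects the non-M-regular summands $p_B^*\cF_{B,i}\otimes Q_{B,i}$ coming from proper fibrations $p_B:A_X\twoheadrightarrow B$.

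The first step is to show that the composition $\pi:g_*\cL\hookrightarrow a_{X*}\omega_X\twoheadrightarrow\cF^X$ is surjective. Its cokernel $\cC$ is a quotient of the M-regular sheaf $\cF^X$, hence continuously globally generated, while the $H^0$-equality forces $h^0(\cC\otimes P)=0$ for general $P\in\Pic^0(A_X)$; together these give $\cC=0$.

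To promote this surjection to the desired inclusion $\cF^X\hookrightarrow g_*\cL$, I would apply the decomposition theorem a second time---now to $g_*\cL$---writing $g_*\cL=\cF^{\cL}\oplus R^{\cL}$ with $\cF^{\cL}$ M-regular, and then exploit the orthogonality between M-regular summands and summands of the form $p_B^*\cF'\otimes Q$ on $A_X$: both $\Hom$ groups vanish. This orthogonality, intrinsic to the canonicity of the decomposition of \cite{CJ} and following from a Fourier--Mukai argument, implies that $R^{\cL}$ maps to zero in $\cF^X$, so $\pi$ restricts to a surjection $\cF^{\cL}\twoheadrightarrow\cF^X$; simultaneously the M-regular subsheaf $\cF^{\cL}\subseteq\cF^X\oplus R$ is forced to sit inside $\cF^X$. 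The same arrow $\cF^{\cL}\to\cF^X$ being both an injection and a surjection must be an isomorphism, so $\cF^{\cL}=\cF^X$ as subsheaves of $a_{X*}\omega_X$, giving the natural inclusion $\cF^X\subseteq g_*\cL$. The principal subtlety I anticipate is justifying the second invocation of the decomposition theorem, since \eqref{decomposition} is formulated for direct summands of canonical pushforwards; this should be handled either by realizing $\cL$ as such a summand after passing to a suitable abelian \'etale cover (as in the setup of \eqref{setup}), or by a direct Fourier--Mukai/GV argument applied to the subsheaf $\cL\subseteq\varphi_{X*}\omega_X$.
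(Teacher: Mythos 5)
Your first step is essentially sound: since $\cF^X$ is M-regular, hence continuously globally generated, and since for $P\in\Pic^0(A_X)$ general every section of $a_{X*}\omega_X\otimes P$ already lies in the subsheaf $g_*\cL\otimes P$ (the generic $H^0$-equality plus the vanishing of $h^0$ of the non-M-regular part $R$ for general $P$), the composite $g_*\cL\hookrightarrow a_{X*}\omega_X\twoheadrightarrow\cF^X$ is surjective. But the second step, which is where the actual content of the lemma lies, has two genuine gaps. First, the decomposition theorem of \cite{CJ} is formulated for direct summands of pushforwards of canonical bundles, whereas $\cL$ is only a subsheaf of $\varphi_{X*}\omega_X$; you flag this yourself, but neither of your proposed fixes is routine, so the decomposition $g_*\cL=\cF^{\cL}\oplus R^{\cL}$ is simply not available. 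Second, the ``orthogonality'' you invoke fails in one of the two directions you need. It is true that $\Hom(\cF, p_B^*\cF'\otimes Q)=0$ for $\cF$ M-regular (use continuous global generation of $\cF$ together with $h^0(p_B^*\cF'\otimes Q\otimes P)=0$ for $P$ general), but $\Hom(p_B^*\cF'\otimes Q,\cF)$ need not vanish: on $A=E_1\times E_2$ take $\cF=L_1\boxtimes L_2$ with $L_i$ ample (this is M-regular) and $p_B=p_1$; then $\Hom(p_1^*L_1, L_1\boxtimes L_2)=H^0(E_2,L_2)\neq 0$. Hence you cannot conclude that $R^{\cL}$ maps to zero in $\cF^X$, and without that the surjection $g_*\cL\twoheadrightarrow\cF^X$ neither restricts to a surjection from $\cF^{\cL}$ nor yields the inclusion $\cF^X\subseteq g_*\cL$: a priori $g_*\cL$ could be the graph of a nonzero homomorphism $\cF^X\rightarrow R$, which surjects onto $\cF^X$ without containing it.

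The missing ingredient is \cite[Proposition 2.13]{PP}, which is exactly what the paper uses: the M-regular part $\cF^X$ is not merely an M-regular summand of the right rank, it is \emph{equal to the image} of the continuous evaluation map $c_X\colon\bigoplus_{Q}H^0(A_X,a_{X*}\omega_X\otimes Q)\otimes Q^{-1}\rightarrow a_{X*}\omega_X$ taken over general $Q\in\Pic^0(A_X)$. Since $H^0(A_X,a_{X*}\omega_X\otimes Q)=H^0(Z,\varphi_{X*}\omega_X\otimes Q)$ and for general $Q$ all of these sections come from $\cL\otimes Q$, the image of $c_X$ is contained in $g_*\cL$, and the inclusion $\cF^X\subseteq g_*\cL$ follows in one stroke --- no second decomposition and no orthogonality are needed. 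Your surjectivity step is in effect a weakened form of this statement; replacing it by the equality $\mathrm{Im}(c_X)=\cF^X$ is what closes the gap.
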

\begin{proof}
As above, let $X^{(d)}\rightarrow X$ be an abelian \'etale cover with Galois group $G$ for $d$ sufficiently large and divisible. Then the complete linear system $|K_{X^{(d)}}|$ defines a $G$-equivariant map $X^{(d)}\rightarrow Z^{(d)}\hookrightarrow \mathbb{P}:=\mathbb{P}(H^0(X^{(d)}, K_{X^{(d)}})^*)$ and $G$ acts on $Z^{(d)}$ freely. Note that $Z=Z^{(d)}/G$, hence  $\cO_{\mathbb{P}}(1)$ descends to a line bundle $\cL$ on $Z$ and $\cL$ is a subsheaf of $\varphi_{X*}\omega_X$. Moreover, since $h^0(X^{(d)}, K_{X^{(d)}})=h^0(Z^{(d)}, \cO_{\mathbb{P}}(1))$, we have 
$h^0(X, K_X\otimes P)=h^0(Z, \cL\otimes P)$ for $P\in\Pic^0(X)$ general. Thus  the image of the continuous evaluation map $$\bigoplus_{Q\in \Pic^0(A_X)\;\; \mathrm{general}}H^0(Z, \varphi_{X*}\omega_X\otimes Q)\otimes Q^{-1}\rightarrow \varphi_{X*}\omega_X$$ is contained in $\cL$.
Moreover, $H^0(Z, \varphi_{X*}\omega_X\otimes Q)=H^0(A_X, a_{X*}\omega_X\otimes Q)$ for all $Q$. Hence the image of of the continuous evaluation map $$c_X: \bigoplus_{Q\in \Pic^0(A_X)\;\; \mathrm{general}}H^0(A_X, a_{X*}\omega_X\otimes Q)\otimes Q^{-1}\rightarrow a_{X*}\omega_X$$ is contained in $g_*\cL$. By \cite[Proposition 2.13]{PP}, the image of $c_X$ is $\cF^X$. Hence $\cF^X$ is contained in $g_*\cL$. 
\end{proof}

 \begin{coro}\label{intermediate}
 Assume that the eventual paracanonical map of $X$ factors through  a smooth projective variety $Y$: $$\varphi_X: X\xrightarrow{\varphi_1} Y\xrightarrow{\varphi_2} Z.$$ Then either $\chi(X, \omega_X)=\chi(Y, \omega_Y)>0$ or $\chi(Y, \omega_Y)=0$.
 \end{coro}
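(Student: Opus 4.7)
The plan is to apply Lemma~\ref{line} to obtain a line bundle $\cL \subset \varphi_{X*}\omega_X$ on $Z$ with $\cF^X \hookrightarrow g_*\cL$, decompose $\varphi_{X*}\omega_X$ via the canonical splitting of $\varphi_{1*}\omega_X$, and then show that $\cL$ must be contained in exactly one of the resulting summands; the dichotomy between these two possibilities will produce precisely the two alternatives in the statement.

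More concretely, I would start from the canonical splitting $\varphi_{1*}\omega_X = \omega_Y \oplus \cQ_1$, which after pushforward by $\varphi_2$ gives
$$\varphi_{X*}\omega_X \,=\, \varphi_{2*}\omega_Y \,\oplus\, \varphi_{2*}\cQ_1.$$
Since $\cL$ is a line bundle on the smooth variety $Z$ and both summands above are torsion-free, the natural map $\cL \to \varphi_{2*}\omega_Y$ is either zero or injective. The variety $Y$ inherits maximal Albanese dimension from $X$, and the composition $b := g \circ \varphi_2 : Y \to A_X$ is primitive because $a_X = b \circ \varphi_1$ is. Thus generic vanishing for primitive morphisms from varieties of maximal Albanese dimension yields $h^0_b(\omega_Y) = \chi(Y,\omega_Y)$, and applying the canonical splitting to general $P \in \Pic^0(A_X)$ gives
$$\chi(X,\omega_X) \,=\, \chi(Y,\omega_Y) \,+\, h^0_b(\cQ_1).$$

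If the projection $\cL \to \varphi_{2*}\omega_Y$ is nonzero, then it is injective, so $\cF^X \hookrightarrow g_*\cL \hookrightarrow b_*\omega_Y$, which after twisting by general $P$ yields $\chi(X,\omega_X) \le \chi(Y,\omega_Y)$; combined with the displayed identity this forces $\chi(Y,\omega_Y) = \chi(X,\omega_X) > 0$. If instead that projection is zero, then $\cL \hookrightarrow \varphi_{2*}\cQ_1$, and the same reasoning gives $\cF^X \hookrightarrow b_*\cQ_1$, hence $\chi(X,\omega_X) \le h^0_b(\cQ_1)$; the displayed identity then forces $\chi(Y,\omega_Y) \le 0$, and since $Y$ has maximal Albanese dimension, $\chi(Y,\omega_Y) \ge 0$, so we conclude $\chi(Y,\omega_Y) = 0$.

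I expect the main point needing some care to be the verification that $b$ is primitive and that $Y$ is of maximal Albanese dimension, so that generic vanishing applies to both $\omega_Y$ and $\cQ_1$ after pullback from $\Pic^0(A_X)$; once these standard ingredients are in place, the whole argument reduces to the rank-$1$ dichotomy for $\cL$ and some straightforward cohomological bookkeeping.
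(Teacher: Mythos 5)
Your proof is correct and follows essentially the same route as the paper: both apply Lemma~\ref{line} to produce the rank-one subsheaf $\cL$ with $\cF^X\hookrightarrow g_*\cL$ and then split into cases according to whether $\cL$ maps nontrivially into the $\omega_Y$-part of the pushed-forward canonical splitting of $\varphi_{1*}\omega_X$. The only (harmless) difference is that the paper first invokes the dichotomy $\chi(Z,\omega_Z)\in\{0,\chi(X,\omega_X)\}$ and, in the case $\chi(Z,\omega_Z)=0$, concludes $\chi(Y,\omega_Y)=0$ via the decomposition theorem (no M-regular summand in $a_{Y*}\omega_Y$), whereas you treat both alternatives uniformly by generic-vanishing counting; both arguments are fine.
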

 \begin{proof}
 If $\chi(X, \omega_X)=\chi(Z, \omega_Z)$, then we have $\chi(X, \omega_X)=\chi(Y, \omega_Y)$. 
 
 We just need to deal with the case $\chi(Z, \omega_Z)=0$. We  write $\varphi_{1*}\omega_X=\omega_Y\oplus \cM_1$ and $\varphi_{2*}\omega_Y=\omega_Z\oplus \cM_2$. By Lemma \ref{line}, after birational modifications, there exists a line bundle $\cL$ on $Z$ such that $\cL$ is a subsheaf of $\varphi_{X*}\omega_X=\omega_Z\oplus \varphi_{2*}\cM_1\oplus\cM_2$ and $h^0_g(Z,\cL)=\chi(X, \omega_X)$. Note that since $\chi(Z, \omega_Z)=0$, the natural morphism $\cL\rightarrow \omega_Z$ is trivial.  If the natural morphism of sheaves $\cL\rightarrow \varphi_{2*}\cM_1$ is non-trivial, then $\chi(Y, \omega_Y)=h^0_{g}(Z, \varphi_{2*}\cM_1)\geq h^0_g(Z, \cL)=\chi(X, \omega_X)$. If the morphism $\cL\rightarrow \varphi_{2*}\cM_1$ is trivial, then $\cL$ is a subsheaf of $\cM_2$, then  $a_{Y*}\omega_Y$, as a direct summand of $a_{X*}\omega_X$, does not contain any M-regular direct summand. Hence $\chi(Y, \omega_Y)=0$.
 \end{proof}
 
 The following corollary gives a sufficient condition for $\varphi_X$ to be birational.
 \begin{coro}\label{criteria}
 Let $a_X: X\rightarrow A_X$ be the Albanese morphism of a variety of maximal Albanese dimension. Let $\cF^X$ be the M-regular part of $a_{X*}\omega_X$. Assume that $\deg(X/a_X(X))<2\rank\cF^X$, the eventual paracanonical map of $X$ is birational. 
 \end{coro}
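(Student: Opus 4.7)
The plan is to apply Lemma \ref{line} and then compare generic ranks along the image $a_X(X)\subset A_X$. After birational modifications, Lemma \ref{line} provides a line bundle $\cL$ on $Z$ with $\cL\hookrightarrow \varphi_{X*}\omega_X$ on $Z$ and an inclusion $\cF^X\hookrightarrow g_*\cL$ as subsheaves of $a_{X*}\omega_X = g_*\varphi_{X*}\omega_X$ on $A_X$. The idea is that the second inclusion forces the whole M-regular part $\cF^X$ to fit inside the pushforward of a single line bundle on $Z$; since that pushforward has rank equal to $\deg(Z/a_X(X))$, this constrains how large $\deg\varphi_X$ can be.

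The main computation is then immediate. Because $g\colon Z\to A_X$ is generically finite onto its image $a_X(X)$ of degree $\deg(Z/a_X(X))$ and $\cL$ has rank one, the sheaf $g_*\cL$ has generic rank $\deg(Z/a_X(X))$ along $a_X(X)$. The inclusion $\cF^X\hookrightarrow g_*\cL$ therefore yields
\[
\rank\cF^X \;\leq\; \deg(Z/a_X(X)).
\]
Using the multiplicativity $\deg(X/a_X(X)) = \deg\varphi_X\cdot\deg(Z/a_X(X))$ coming from the factorization $a_X = g\circ\varphi_X$, together with the hypothesis $\deg(X/a_X(X)) < 2\rank\cF^X$, I would conclude
\[
\deg\varphi_X \;=\; \frac{\deg(X/a_X(X))}{\deg(Z/a_X(X))} \;\leq\; \frac{\deg(X/a_X(X))}{\rank\cF^X} \;<\; 2,
\]
so $\deg\varphi_X = 1$, i.e., $\varphi_X$ is birational.

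There is no real obstacle beyond some bookkeeping about what ``rank'' means. Since $X$ has maximal Albanese dimension, $\dim a_X(X) = \dim X = \dim Z$, and both $\cF^X$ and $g_*\cL$ are supported on $a_X(X)$, so the generic ranks appearing above are well-defined along the common support and the inequality $\rank\cF^X\leq\rank g_*\cL$ is legitimate. The corollary is therefore essentially a numerical reading of Lemma \ref{line}: the M-regular part of $a_{X*}\omega_X$ already fits into a rank-$\deg(Z/a_X(X))$ piece, and that alone forces birationality as soon as the M-regular rank exceeds half of $\deg(X/a_X(X))$.
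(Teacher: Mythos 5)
Your argument is correct and is essentially identical to the paper's proof: both rest on Lemma \ref{line} giving $\cF^X\hookrightarrow g_*\cL$, hence $\rank\cF^X\leq\deg(Z/a_X(X))$, combined with the multiplicativity $\deg(X/a_X(X))=\deg\varphi_X\cdot\deg(Z/a_X(X))$ and the hypothesis. The paper merely phrases it as a contradiction ($\deg\varphi_X\geq 2$ would force $\deg(X/a_X(X))\geq 2\rank\cF^X$) while you argue directly; there is no substantive difference.
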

 \begin{proof}
 If $\varphi_X$ is not birational, then $a_X$ factors through $\varphi_X$. Hence $\deg(X/a_X(X))\geq 2\deg(Z/a_X(X))$. By Lemma \ref{line}, $\rank\cF^X\leq \rank g_*\cL=\deg(Z/a_X(X))$ and we get a contradiction.
 
 \end{proof}

The sets  $\cC_X$ and $\cC_{\cQ}$ play important a role in the study of $\varphi_X$.
 
 \begin{coro}\label{fiber}Let $X$ be a smooth projective variety of maximal Albanese dimension and $\chi(X, \omega_X)>0$. If  $\deg \varphi_X>1$, $\cC_X\neq \emptyset$ and hence $a_X(X)$ is fibred by tori. If $\deg\varphi_X>2$, $\cC_{\cQ}\neq \emptyset$.
\end{coro}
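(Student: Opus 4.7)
The plan is to combine the rank bound from Lemma~\ref{line} with the decomposition~(\ref{decomposition}) and the dichotomy (A) of Barja, Pardini and Stoppino. Throughout, let $\cL$ be the rank one subsheaf of $\varphi_{X*}\omega_X$ supplied by Lemma~\ref{line}, so that $\cF^X\hookrightarrow g_*\cL$ yields the key inequality $\rank\cF^X\leq\rank g_*\cL=\deg(Z/a_X(X))$, which we abbreviate as $\deg g$.

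For the first assertion, suppose for contradiction that $\cC_X=\emptyset$. The decomposition~(\ref{decomposition}) then collapses to $a_{X*}\omega_X=\cF^X$, whose generic rank along $a_X(X)$ is $\deg a_X=\deg\varphi_X\cdot\deg g$. Combined with the bound above this gives $\deg\varphi_X\leq 1$, contradicting the hypothesis. Once $\cC_X\neq\emptyset$, pick $p_B\colon A_X\twoheadrightarrow B$ in $\cC_X$ with $\dim B<\dim A_X$; the Stein factorization of $p_B\circ a_X$ yields a fibration $X\to X_B$ whose general fibers are contracted by $p_B\circ a_X$, so $a_X(X)$ contains translates of the abelian subvariety $\ker p_B$ and is accordingly fibred by tori.

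For the second assertion, assume $\cC_{\cQ}=\emptyset$, so $g_*\cQ=\cF^{\cQ}$ is M-regular with generic rank $(\deg\varphi_X-1)\deg g$, and invoke (A). If $\chi(Z,\omega_Z)=0$, then $\cF^Z=0$: a non-zero M-regular sheaf is continuously globally generated and therefore has strictly positive Euler characteristic, while non-M-regular summands $p_B^*\cF_{B,i}\otimes Q_{B,i}$ with $B\neq A_X$ have $\chi=0$. Hence $\cF^X=\cF^{\cQ}$, and the rank bound becomes $(\deg\varphi_X-1)\deg g\leq\deg g$, forcing $\deg\varphi_X\leq 2$. If instead $\chi(Z,\omega_Z)=\chi(X,\omega_X)$, comparing Euler characteristics in $g_*\varphi_{X*}\omega_X=g_*\omega_Z\oplus g_*\cQ$ gives $\chi(\cF^{\cQ})=0$; by the same M-regularity observation, $\cF^{\cQ}=0$, whence $g_*\cQ=0$ and $\cQ=0$, contradicting $\rank\cQ=\deg\varphi_X-1\geq 2$. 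Either way $\deg\varphi_X\leq 2$, contradicting $\deg\varphi_X>2$.

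The main conceptual point, rather than any single calculation, is the tension between the rank one bottleneck imposed by $\cL$ through Lemma~\ref{line} and the generically large rank $(\deg\varphi_X-1)\deg g$ of $g_*\cQ$ that would have to be entirely M-regular if $\cC_{\cQ}=\emptyset$. The dichotomy (A) is indispensable in the case $\chi(Z,\omega_Z)=\chi(X,\omega_X)$, where the rank inequality by itself does not suffice and the contradiction is instead extracted from Euler characteristics together with continuous global generation of M-regular sheaves.
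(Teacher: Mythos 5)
Your proofs of the two non-emptiness claims are correct and rest on the same rank bottleneck the paper itself uses: $\rank\cF^X\leq\rank g_*\cL=\deg g$ from Lemma \ref{line}, played against the generic rank $\deg\varphi_X\cdot\deg g$ of $a_{X*}\omega_X$. For the second claim, however, you have made the argument more elaborate than it needs to be. Since the M-regular part of a direct sum is the direct sum of the M-regular parts, $\cF^X=\cF^Z\oplus\cF^{\cQ}$, so $\rank\cF^{\cQ}\leq\rank\cF^X\leq\deg g$; if $\cC_{\cQ}=\emptyset$ then $g_*\cQ=\cF^{\cQ}$ has rank $(\deg\varphi_X-1)\deg g$, and $\deg\varphi_X\leq 2$ follows in one line, uniformly in both branches of the dichotomy (A). This is exactly the paper's proof. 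Your detour through Euler characteristics and continuous global generation in the case $\chi(Z,\omega_Z)=\chi(X,\omega_X)$ is valid (and in fact proves the stronger statement that $\deg\varphi_X>1$ already forces $\cC_{\cQ}\neq\emptyset$ there, cf.\ Corollary \ref{complementary2}), but your assertion that ``the rank inequality by itself does not suffice'' in that case is not accurate.

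There is a genuine gap in your justification of the secondary conclusion that $a_X(X)$ is fibred by tori. That the general fibre of the Stein factorization $X\to X_B$ is contracted by $p_B\circ a_X$ only says that its image under $a_X$ lies \emph{inside} a coset of $K=\ker p_B$; it does not say that the image is a full coset, which is what ``fibred by tori'' requires, so the inference ``so $a_X(X)$ contains translates of $\ker p_B$'' is unsupported as written. The paper's argument is sheaf-theoretic: the nonzero summand $p_B^*\cF_{B,i}^X\otimes Q_{B,i}$ of $a_{X*}\omega_X$ is supported on $p_B^{-1}\bigl(\Supp\cF_{B,i}^X\bigr)$, a union of $K$-cosets, while $a_{X*}\omega_X$ is torsion-free with irreducible support $a_X(X)$, so every nonzero direct summand has support equal to all of $a_X(X)$; hence $a_X(X)$ itself is a union of $K$-cosets. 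You should replace your fibration argument by this support argument (or supply the missing step some other way).
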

\begin{proof} 
By the assumption that $\deg\varphi_X>1$, $g_*\cL$ is a proper subsheaf of $a_{X*}\omega_X$. By the above lemma, we conclude that the second part in the decomposition of $a_{X*}\omega_X$ is non-trivial, namely there exists $p_B: A_X\rightarrow B$ such that $p_B^*\cF_B^X\otimes Q_B$ is a direct summand of $a_{X*}\omega_X$. Thus, $\cC_X\neq\emptyset$. Moreover, $a_{X*}\omega_X$ is a torsion-free sheaf supported on $a_X(X)$. Hence $a_X(X)$ is fibred by the kernel of $p_B$.

Similarly, if $\deg\varphi_X>2$, $\rank\cQ>1$. Thus $g_*\cQ$ is not M-regular and $\cC_{\cQ}\neq \emptyset$.
 \end{proof}
 \begin{lemm}\label{complementary} Let $A_X\rightarrow B$ be a non-trivial quotient with connected kernel $K$ and let $\pi: \tilde{A}_X\rightarrow A_X$ be an \'etale cover. Considering the commutative diagram (\ref{setup}),
 assume that the induced morphism $\tilde{\varphi}_B: \tilde{X}_B\rightarrow \tilde{Z}_B$ has degree $>1$. Then there exists $p_{B'}\in\cC_X$ such that the morphism $ p_{B'}\mid_K: K\rightarrow B'$ is an isogeny onto its image. Moreover, if $\chi(X, \omega_X)=\chi(Z, \omega_Z)$,   we can choose $p_{B'}\in \cC_{\cQ}$.
 \end{lemm}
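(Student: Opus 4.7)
The plan is to combine the canonical-splitting formula (\ref{explanation}) with the M-regular subsheaf produced by Lemma~\ref{line}: first witness a ``parallel'' summand $p_{C_0}\in \cC_{\cQ}$ from the hypothesis $\deg\tilde\varphi_B>1$, then upgrade this to the ``transverse'' quotient $p_{B'}\in\cC_X$ demanded by the conclusion.

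After enlarging $\pi\colon \tilde A_X\to A_X$ so that every torsion line bundle $Q_{B,i}$ appearing in (\ref{decomposition}) is trivialized, the decomposition on $\tilde A_X$ is clean and (\ref{explanation}) applies. Apply Lemma~\ref{line} to $\tilde\varphi_X\colon \tilde X\to\tilde Z$ to obtain a line bundle $\tilde\cL$ on $\tilde Z$ with $\tilde\cL\hookrightarrow \tilde\varphi_{X*}\omega_{\tilde X}$ and $\cF^{\tilde X}\hookrightarrow \tilde g_*\tilde\cL$. The hypothesis $\deg\tilde\varphi_B>1$ then gives a non-zero $\tilde\cQ_B$ in the canonical splitting $\tilde\varphi_{B*}\omega_{\tilde X_B}=\omega_{\tilde Z_B}\oplus\tilde\cQ_B$, and the second identity in (\ref{explanation}) produces some $p_{C_0}\in\cC_{\cQ}$ with $\widehat{C_0}\subset\PB$, equivalently $K\subset\ker p_{C_0}$.

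The core step is to convert this parallel witness into a transverse $p_{B'}$. Decompose $\tilde g_*\tilde\cL$ via (\ref{decomposition}): its M-regular part $\cF^{\tilde X}$ has rank $\chi(X,\omega_X)>0$ with Fourier support on all of $\widehat{\tilde A_X}$, while the rank-one $\tilde\cL$ must project non-trivially onto the $\tilde\cQ$-summand of $\tilde\varphi_{X*}\omega_{\tilde X}$, for otherwise $\tilde\cL\subset\omega_{\tilde Z}$ forces $\tilde g_*\tilde\cL\subset\tilde g_*\omega_{\tilde Z}$, contradicting the presence of the $p_{C_0}$-summand in $\tilde g_{B*}\tilde\cQ_B$. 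Tracking the components of $\tilde g_*\tilde\cL$ against the decomposition on $\tilde A_X$ and matching Fourier supports then produces a non-M-regular summand $p_{B'}^*\cF_{B',j}$ of $\tilde g_*\tilde\cL$ with $p_{B'}\in\cC_X$ and $\widehat{B'}\not\subset\PB$; a final rank/support comparison sharpens $\widehat{B'}\not\subset\PB$ to $K\cap\ker p_{B'}$ finite, so $p_{B'}\vert_K\colon K\to B'$ is an isogeny onto its image. For the moreover clause, $\chi(X,\omega_X)=\chi(Z,\omega_Z)$ gives $\cF^{\cQ}=0$, so every non-M-regular summand of $g_*\cQ$ lies in $\cC_{\cQ}$, and the transverse summand produced above therefore lies in $\cC_{\cQ}$.

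\emph{The main obstacle} is this conversion step: formula (\ref{explanation}) only sees summands $p_C\in\cC_X$ with $\PC\subset\PB$, so producing a truly transverse $p_{B'}$---with $K\cap\ker p_{B'}$ finite, not merely $\widehat{B'}\not\subset\PB$---has to be extracted indirectly from $\tilde g_*\tilde\cL$ on $\tilde A_X$, leveraging simultaneously the M-regularity of $\cF^{\tilde X}$ and the parallel witness $p_{C_0}$ to rule out every non-transverse alternative.
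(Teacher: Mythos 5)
There is a genuine gap at exactly the point you flag as ``the main obstacle'': the conversion of the parallel witness $p_{C_0}$ (with $\widehat{C_0}\subset\PB$) into a quotient $p_{B'}$ with $K\cap\ker p_{B'}$ finite is never actually argued. ``Tracking the components of $\tilde g_*\tilde\cL$ against the decomposition on $\tilde A_X$ and matching Fourier supports'' and ``a final rank/support comparison'' are placeholders for the entire content of the lemma, and the surrounding claims do not support them. In particular: (i) you decompose $\tilde g_*\tilde\cL$ via (\ref{decomposition}), but the decomposition theorem applies to \emph{direct summands} of pushforwards of canonical sheaves, whereas $\cL$ is only a rank-one \emph{subsheaf} of $\varphi_{X*}\omega_X$, so $\tilde g_*\tilde\cL$ has no a priori decomposition of that shape; (ii) the claim that $\tilde\cL$ must project non-trivially onto the $\tilde\cQ$-summand is false precisely in the case $\chi(X,\omega_X)=\chi(Z,\omega_Z)$, where $\cL=\omega_Z$; (iii) the M-regular part $\cF^{\tilde X}$ has $\chi$ equal to $\chi(X,\omega_X)$, not rank equal to it. Even granting a decomposition of $\tilde g_*\tilde\cL$, knowing that it contains a non-M-regular summand with $\widehat{B'}\not\subset\PB$ does not yield $K\cap\ker p_{B'}$ finite when $\dim A_X\geq 3$, and you give no mechanism for that sharpening.

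The paper's proof supplies the missing mechanism by restricting to the fiber $K$ of $A_X\to B$ over a general point $t\in B$. Writing $s_1,\dots,s_m$ for the preimages of $t$ in $Z_B$ and $t_{i1},\dots,t_{ik}$ ($k=\deg\varphi_B>1$) for their preimages in $X_B$, base change gives $(a_{X*}\omega_X)\vert_K=\bigoplus_{i,j}g_{i*}\varphi_{ij*}\omega_{X_{ij}}$, while $\varphi_X^*\cL\hookrightarrow\omega_X$ restricts to $\cL\vert_{Z_i}\hookrightarrow\varphi_{ij*}\omega_{X_{ij}}$ for \emph{each} of the $k\geq 2$ values of $j$. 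Hence the M-regular sheaf $\cF^X\vert_K\hookrightarrow (g_*\cL)\vert_K$ embeds into $\bigoplus_i g_{i*}\varphi_{ij*}\omega_{X_{ij}}$ for each $j$ separately, so the M-regular part of $(a_{X*}\omega_X)\vert_K$ strictly exceeds $\cF^X\vert_K$; comparing with the restriction of the decomposition forces some $(p_{B'}^*\cF^X_{B'})\vert_K$ to be M-regular on $K$, which is precisely the statement that $p_{B'}\vert_K$ is an isogeny onto its image. The ``moreover'' clause then follows because $\cL=\omega_Z$ and $k>1$ give $\omega_{Z_i}\hookrightarrow\cQ\vert_{Z_i}$, so the extra M-regular content sits inside $g_*\cQ\vert_K$. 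Your proposal would need to be rebuilt around an argument of this kind (or an equivalent one); as written it does not establish the transversality that is the whole point of the lemma.
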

 \begin{proof}
Since the argument would be the same, we may simply assume that the \'etale cover $\pi$ is identity of $A_X$ and $\deg \varphi_B>1$.
 
 We then take $t\in B$ a general point in the image of $Z_B$, $s_1,\cdots, s_m$ the pre-image of $t$ in $Z_B$ and for each $s_i$, we denote by $t_{i1},\cdots, t_{ik}$ the pre-image in $X_B$. Note that by assumption $k=\deg \varphi_B>1$. 
 
Denote by $K$ the fiber of $A_X\rightarrow B$ over $t$, $Z_i$ the fiber of $Z\rightarrow Z_{B}$ over $s_i$, and $X_{ij}$ the fiber of $X\rightarrow X_B$ over $t_{ij}$. We will denote by $X_{ij}\xrightarrow{\varphi_{ij}} Z_i\xrightarrow{g_i} K$ the morphisms between the fibers. We consider $a_{X*}\omega_X\mid_K$. By base change, 
 \begin{eqnarray}\label{iso}
 \cF^X\mid_K\oplus \bigoplus_{p_B\in\cC_X}\bigoplus_k (p_B^*\cF^X_{B,k}\otimes Q_{B,k})\mid_K&=&(a_{X*}\omega_X)\mid_K\\\nonumber
 &=&g_* \omega_Z\mid_K\oplus g_*\cQ\mid_K\\\nonumber&=&\bigoplus_{1\leq i\leq m}\bigoplus_{1\leq j\leq k}g_{i*}\varphi_{ij*}\omega_{X_{ij}}.
 \end{eqnarray}
  
Since $\varphi_X: X\rightarrow Z$ is the eventual paracanonical map of $X$, after birational modifications of $X$ and $Z$, there exists a line bundle $\cL$  of $\varphi_{X*}\omega_X$  such that $g_*\cL$ contains $\cF^X$. Then $\varphi_X^*\cL\hookrightarrow \omega_X$, we  have $\varphi_X^*\cL\mid_{X_{ij}}=\varphi_{ij}^*(\cL\mid_{Z_i})\hookrightarrow \omega_{X_{ij}}$ on each $X_{ij}$. In particular, $ \cL\mid_{Z_i}\hookrightarrow\varphi_{ij*}\omega_{X_{ij}}$ for each $j$. Moreover, $(g_*\cL)\mid_K\simeq \bigoplus_{1\leq i\leq m}g_{i*}(\cL\mid_{Z_i})$ by base change. 

Thus $\cF^X\mid_K\hookrightarrow \bigoplus_{1\leq i\leq m}g_{i*}\varphi_{ij*}\omega_{X_{ij}}$, for each $j$.
 
Note that $\cF^X\mid_K$ is M-regular, we compare the M-regular part of both sides of (\ref{iso}) and  conclude that, for some $p_{B'}\in\cC_X$, $(p_{B'}^*\cF^X_{B', k})\mid_K$ is M-regular. Then $p_{B'}\mid_K: K\rightarrow B'$ is an isogeny onto its image.
  
  Note that if $\chi(X, \omega_X)=\chi(Z, \omega_Z)$, then $\cL=\omega_Z$. Then $g_{i*}\omega_{Z_i}$ contains a M-regular sheaf on $K$. Since $\deg\varphi_B=k>1$, we see that $\omega_{Z_i}\hookrightarrow \cQ\mid_{Z_i}$. Hence $g_*\cQ\mid_K$ also contains non-trivial M-regular part and we can pick $p_{B'}\in \cC_{\cQ}$. 
  \end{proof}
 
 \begin{coro}\label{complementary2}
Assume that $\deg\varphi_X>1$ and $\chi(X, \omega_X)=\chi(Z, \omega_Z)$. For any $p_{B_1}\in\cC_{\cQ}$, there exists $p_{B_2}\in\cC_{\cQ}$ such that $p_{B_1}\times p_{B_2}: A_X\rightarrow B_1\times B_2$ is an isogeny onto its image.
 \end{coro}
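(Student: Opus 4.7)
The plan is to derive this corollary directly from Lemma \ref{complementary} applied to $B = B_1$; the only nontrivial point is to check that the hypothesis $\deg \tilde{\varphi}_{B_1} > 1$ of that lemma is satisfied after a suitable étale cover.

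First, I would recall that, as explained after (\ref{explanation}), the membership $p_{B_1} \in \cC_{\cQ}$ is equivalent to the existence of a finite abelian étale cover $\tilde{X} \to X$ with
\[
\chi(\tilde{X}_{B_1}, \omega_{\tilde{X}_{B_1}}) > \chi(\tilde{Z}_{B_1}, \omega_{\tilde{Z}_{B_1}}).
\]
Since $\chi(-, \omega_{(-)})$ is a birational invariant of smooth projective varieties, a birational $\tilde{\varphi}_{B_1}$ would force equality in the above inequality, contradicting $p_{B_1} \in \cC_{\cQ}$. Hence $\deg \tilde{\varphi}_{B_1} > 1$, which is exactly the hypothesis needed to invoke Lemma \ref{complementary}.

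Next I would apply the \emph{moreover} clause of Lemma \ref{complementary}, which uses precisely the assumption $\chi(X,\omega_X) = \chi(Z,\omega_Z)$, to produce $p_{B_2} \in \cC_{\cQ}$ such that $p_{B_2}|_{K_1} : K_1 \to B_2$ is an isogeny onto its image, where $K_1$ denotes the connected component of $\ker(p_{B_1})$. To conclude, note that $\ker(p_{B_1} \times p_{B_2}) = \ker(p_{B_1}) \cap \ker(p_{B_2})$, and this subgroup is finite: the identity component $K_1$ meets $\ker(p_{B_2})$ in a finite set by construction, and $\ker(p_{B_1})$ has only finitely many connected components, so each coset of $K_1$ contributes at most finitely many intersection points. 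Finiteness of the kernel is precisely the statement that $p_{B_1} \times p_{B_2} : A_X \to B_1 \times B_2$ is an isogeny onto its image.

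I do not anticipate a genuine obstacle here: the proof is essentially a repackaging of Lemma \ref{complementary}, with the substantive content lying in translating $p_{B_1} \in \cC_{\cQ}$ into the hypothesis $\deg \tilde{\varphi}_{B_1} > 1$ via birational invariance of $\chi(\omega)$.
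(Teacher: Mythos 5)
Your proposal is correct and follows essentially the same route as the paper: pass to an étale cover killing the torsion line bundles, use the characterization of $p_{B_1}\in\cC_{\cQ}$ coming from (\ref{explanation}) to see that $\tilde{\varphi}_{B_1}$ has degree $>1$, and then invoke the \emph{moreover} clause of Lemma \ref{complementary}. The only difference is cosmetic — you deduce $\deg\tilde{\varphi}_{B_1}>1$ from the strict inequality of holomorphic Euler characteristics rather than directly from the rank comparison in the decomposition, and you spell out the finiteness of $\ker(p_{B_1}\times p_{B_2})$, which the paper leaves implicit.
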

  \begin{proof}
  By the assumption, $\cC_{\cQ}\neq \emptyset$.
   We may take an \'etale cover $\tilde{A}_X\rightarrow A_X$ such that the pull-back of all $Q_{B_1,i}$ in (\ref{decomposition}) is trivial. Let $p_{B_1}\in \cC_{\cQ}$. Then we know from (\ref{explanation})  that $\tilde{\varphi}_B: \tilde{X}_B\rightarrow \tilde{Z}_B$ has degree $>1$. Then by Lemma \ref{complementary}, there exists $p_{B_2}\in \cC_{\cQ}$ such that $p_{B_1}\times p_{B_2}: A_X\rightarrow B_1\times B_2$ is an isogeny onto its image.
  \end{proof}

Finally, we shall briefly mention the general strategy of this paper.   
\begin{rema}\label{strategy}
We are interesting about the structure of $\varphi_X$. We often take a sufficiently large and divisible \'etale cover $\pi: \tilde{A}_X\rightarrow A_X$ of abelian varieties \footnote{we can take $\pi$ to be the $m_d: A_X\rightarrow A_X$ for $d$ sufficiently large and divisible. } and consider the base changes $\pi_X:\tilde{X}\rightarrow X$, $\pi_Z:\tilde{Z}\rightarrow Z$, and $\tilde{a}_X: \tilde{X}\xrightarrow{\tilde{\varphi}_X} \tilde{Z}\xrightarrow{\tilde{g}}\tilde{A}_X$. Then we may and will assume that $\pi^*Q_{B,i}$ are trivial line bundles for all $Q_{B,i}$ appears in the decomposition formula of $a_{X*}\omega_X$ and the morphism $\tilde{\varphi}_X$ is birational equivalent to the map induced by $|K_{\tilde{X}}\otimes Q|$ for $Q\in \PA_X$ general.

Note that the structure of $\varphi_X$ is the same as the structure of $\tilde{\varphi}_X$, $\pi_Z^*\cL\hookrightarrow \tilde{\varphi}_{X*}\omega_{\tilde{X}}$, and $h_{\tilde{g}}^0(\pi_Z^*\cL)=\chi(\tilde{X},\omega_{\tilde{X}})$. Thus we can focus on the structure of $\tilde{\varphi}_X$. The advantage is that, by (\ref{explanation}), Lemma \ref{fiber},  Lemma \ref{complementary}, and Corollary \ref{complementary2}, we have several irregular fibrations $\tilde{X}\rightarrow \tilde{X}_B$ with $\tilde{X}_B$ of general type. The existence of such fibrations together with the assumption that $\cF^{\tilde{X}}=\pi^*\cF^X$ is contained in the sheaf $\tilde{g}_*(\pi_Z^*\cL)$ whose rank is $\leq \deg\tilde{g}$, give very strong restrictions on the structure of $\tilde{X}$, $\tilde{Z}$, and $\tilde{\varphi}_X$. 
\end{rema}

 \section{Surface case}
 In this section, we study the eventual paracanonical maps in dimension $2$. We already know by \cite{BPS2} that $\deg\varphi_X\leq 4$ and $\deg\varphi_X\leq 2$ if $Z$ is also of general type. We shall first show that in the latter case, $X$ is always birational to $Z$.
 
\subsection{$Z$ cannot be of general type}

\begin{prop}\label{technical} 
Let $Z$ be a smooth projective surface of maximal Albanese dimension and of general type. Let $t: X\rightarrow Z$ be a generically finite morphism between smooth projective surfaces and $\deg t>1$. Assume that $\chi(X, \omega_X)=\chi(Z, \omega_Z)$. Then there exists double covers $C_i\rightarrow B_i$ of elliptic curves with involutions $\tau_i$, where $C_i$ is a smooth projective curve of genus $>1$ and an abelian \'etale cover $\tilde{Z}\rightarrow Z$ with the induced base change $\tilde{t}:\tilde{X}\rightarrow \tilde{Z}$ such that we have the following commutative diagram:
\begin{eqnarray*}
\xymatrix{
\tilde{X}\ar[r]^{\tilde{t}}\ar[d]^{birational} & \tilde{Z}\ar[d]^{birational}\ar[r] & \tilde{A}_{Z}\ar@{=}[d] \\
\tilde{C_1\times C_2}\ar[d]^{\text{\'etale}}\ar[r] & \tilde{(C_1\times C_2)/\langle\tau_1\times \tau_2\rangle}\ar[d]^{\text{\'etale}}\ar[r] & \tilde{A}_{Z}\ar[d]\\
C_1\times C_2\ar[r] & (C_1\times C_2)/\tau_1\times\tau_2\ar[r] & B_1\times B_2,}
\end{eqnarray*}
where the two commutative diagram downstairs are Cartesian.
\end{prop}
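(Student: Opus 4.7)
The approach is to run the generic vanishing argument from Lemma~\ref{complementary} and Corollary~\ref{complementary2} in this slightly more general setting, where $t$ is not assumed to be the eventual paracanonical map. The key observation is that, since $Z$ is of general type, the line bundle $\omega_Z \hookrightarrow t_*\omega_X$ plays precisely the role of the line bundle $\cL$ from Lemma~\ref{line}: one will automatically have $\cF^X \hookrightarrow a_{Z*}\omega_Z$, so the techniques developed for the eventual paracanonical map apply directly.

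First I would set up the canonical splitting $t_*\omega_X = \omega_Z \oplus \cQ$ and apply the decomposition theorem (\ref{decomposition}) to $a_{Z*}\omega_Z$ and $a_{Z*}\cQ$, writing $\cF^X = \cF^Z \oplus \cF^{\cQ}$ for the M-regular summands. The assumption $\chi(X, \omega_X) = \chi(Z, \omega_Z)$ then forces $\chi(\cF^{\cQ}) = 0$, hence $\cF^{\cQ} = 0$ by M-regularity; combined with $\cQ \neq 0$ (as $\deg t > 1$), this yields $\cC_{\cQ} \neq \emptyset$. Next I would adapt the proof of Lemma~\ref{complementary} essentially verbatim, with $\omega_Z$ in place of $\cL$: for any $p_{B_1} \in \cC_{\cQ}$ one finds $p_{B_2} \in \cC_{\cQ}$ such that $p_{B_1} \times p_{B_2}\colon A_X \to B_1 \times B_2$ is an isogeny onto its image. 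Since $\dim A_X = 2$, both $B_i$ must be elliptic curves and this isogeny is surjective.

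Passing to an \'etale cover $\pi\colon \tilde A_X = B_1 \times B_2 \to A_X$ that trivializes all torsion line bundles in the decomposition, the Stein factorizations produce $\tilde X \to C_i \to B_i$ and $\tilde Z \to \tilde Z_{B_i} \to B_i$, with (\ref{explanation}) giving $\chi(C_i, \omega_{C_i}) = h^0(\cF_{B_i}^X) > 0$ and therefore $g(C_i) \geq 2$. Since $p_{B_i} \in \cC_{\cQ}$, the induced maps $C_i \to \tilde Z_{B_i}$ have degree $\geq 2$. Assembling the two fibrations yields a generically finite map $\tilde X \to C_1 \times C_2$. A rank and degree computation, pitting the bound $\rank \cF^X \leq \deg(Z/a_X(X))$ (from $\cF^X \hookrightarrow a_{Z*}\omega_Z$) against the decomposition of $\tilde a_{X*}\omega_{\tilde X}$, will force this map to be birational and pin down $\deg(C_i/B_i) = 2$ and $\tilde Z_{B_i} = B_i$. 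Hence $C_i \to B_i$ are double covers with deck involutions $\tau_i$, and $\tilde Z$ is a degree-two intermediate quotient of $C_1 \times C_2 \to B_1 \times B_2$ with maximal Albanese dimension $2$. The one-sided quotients $C_1 \times B_2$ and $B_1 \times C_2$ have Albanese of dimension strictly greater than $2$, so the only remaining possibility is the diagonal quotient $(C_1 \times C_2)/\langle \tau_1 \times \tau_2\rangle$, as desired.

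The main obstacle will be the rank/degree bookkeeping of the last step: concluding simultaneously that $\tilde X \to C_1 \times C_2$ is birational, that each $C_i \to B_i$ has degree exactly $2$, and that $\tilde Z_{B_i}$ descends all the way down to $B_i$. This demands a careful interplay between the bound $\rank \cF^X \leq \deg(Z/a_X(X))$ coming from $Z$ of general type and the rank contributions appearing in $\tilde a_{X*}\omega_{\tilde X} = \cF^{\tilde X} \oplus p_{B_1}^*\cF_{B_1}^X \oplus p_{B_2}^*\cF_{B_2}^X$.
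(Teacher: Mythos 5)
Your setup is sound and matches the first part of the paper's argument: since $\chi(X,\omega_X)=\chi(Z,\omega_Z)$, the M-regular summand $\cF^{\cQ}$ of $a_{Z*}\cQ$ vanishes, so $\cF^X=\cF^Z\hookrightarrow a_{Z*}\omega_Z$ and $\omega_Z$ indeed plays the role of $\cL$; the paper itself notes this inside the proof of Lemma \ref{complementary}, and your use of Lemma \ref{complementary}/Corollary \ref{complementary2} to produce two complementary quotients $p_{B_1},p_{B_2}\in\cC_{\cQ}$ is legitimate. However, the proposal has three genuine gaps, and the last one is fatal to the strategy as described.

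First, you assert $\dim A_X=2$ (hence that the $B_i$ are elliptic curves) without proof. A priori $q(Z)$ could exceed $2$, with the $B_i$ abelian surfaces and $a_Z(Z)$ a proper subvariety of $A_Z$. The paper rules this out by a separate Claim --- $\tilde Z$ admits no fibration onto a curve of genus $\geq 2$ --- proved via Fujita's nefness of $h_*(\omega_{\cdot/C}\otimes Q)$ and Riemann--Roch on the base; nothing in your outline substitutes for this. Second, you never establish $\deg t=2$. The paper's very first line invokes \cite[Proposition 3.3]{BPS2} (which rests on Miyaoka--Yau) to get this, and then uses it to see that each $\tilde X_B\to B$ is itself a double cover, i.e.\ each $\cF_B$ is a line bundle.

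Third, and most seriously, the concluding ``rank and degree computation'' cannot force $\tilde X\to C_1\times C_2$ to be birational. Writing $k=\deg\tilde t$, $m=\deg\tilde g$, the rank identity reads $km=m+\sum_{p_B\in\cC_{\tilde\cQ}}\rank\cF_B$, and the configuration with $k=2$, $m=4$ and \emph{four} pairwise complementary elliptic quotients each contributing a line bundle satisfies every rank, degree and divisibility constraint you have available (including $\rank\cF^{\tilde X}\leq m$), yet gives $\deg(\tilde X/C_1\times C_2)=2$. What actually excludes a third element of $\cC_{\tilde\cQ}$ in the paper is a different kind of argument: one forms the birational $(\mathbb{Z}/2\mathbb{Z})^3$-cover $X_1\to\tilde A_Z$ built from $C_1,C_2,C_3$, and uses the ring structure of $a_{1*}\cO_{X_1}$ (the inclusions $\cL_{\chi_j\chi_k}\hookrightarrow\cL_{\chi_i}\cL_{\chi_1\chi_2\chi_3}$) to show the eigensheaf $\cL_{\chi_1\chi_2\chi_3}$ is M-regular, contradicting the fact that $\tilde g_*\tilde\cQ$ has no M-regular summand. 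You correctly identify the last step as the main obstacle, but the tool you propose for it is not the one that works; without the Miyaoka--Yau input, the exclusion of genus $\geq 2$ bases, and the eigensheaf/M-regularity argument, the proof does not close.
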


We will see a different proof of this proposition later, which is quite short (see Remark \ref{relation} and Remark \ref{easyproof}) but depends heavily on a previous result in \cite{CDJ}. We insist to write down this technical proof because it shed some light on higher dimensional cases.
\begin{proof}
By \cite[Proposition 3.3]{BPS2}, $\deg t=2$.  
 
Let $\rho: X\xrightarrow{t} Z\xrightarrow{a_Z} A_Z$ be the composition of morphisms and denote $t_*\omega_X=\omega_Z\oplus\cQ$. We then have $\rho_*\omega_X=a_{Z*}\omega_X\oplus a_{Z*}\cQ$. Since $\chi(X, \omega_X)=\chi(Z, \omega_Z)$, $a_{Z*}\cQ$ is the direct sum of pull-back of M-regular sheaves on lower dimensional abelian varieties by the decomposition (\ref{decomposition}). We then take an \'etale cover of $\tilde{A_Z}\rightarrow A_Z$  with induced morphisms $$\tilde{X}\xrightarrow{\tilde{t}} \tilde{Z}\xrightarrow{\tilde{g}} \tilde{A}_Z,$$ and the splitting $\tilde{t}_*\omega_{\tilde{X}}=\omega_{\tilde{Z}}\oplus\tilde{\cQ}$ such that 
\begin{eqnarray}\label{dec-surface}\tilde{g_*}\tilde{\cQ}=\bigoplus_{p_B: \tilde{A}_Z\rightarrow B}p_B^*\cF_B.
\end{eqnarray}

\begin{claim}
The surface $\tilde{Z}$ does not admit a fibration to a smooth curve $C$ of genus $\geq 2$.
\end{claim}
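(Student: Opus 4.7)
The plan is to suppose for contradiction that $f : \tilde{Z} \to C$ is a fibration onto a smooth projective curve $C$ with $g(C) \geq 2$, and to reach a contradiction by combining the decomposition (\ref{dec-surface}) with the universal property of the Albanese.

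The first step is to force $\dim \tilde{A}_Z = 2$ with a splitting $\tilde{A}_Z \sim B_1 \times B_2$ into elliptic curves. The hypotheses $\deg \tilde{t} = 2$ and $\chi(\tilde{X}, \omega_{\tilde{X}}) = \chi(\tilde{Z}, \omega_{\tilde{Z}})$ force the M-regular part of $\tilde{g}_*\tilde{\cQ}$ to vanish, so the nonzero sheaf $\tilde{g}_*\tilde{\cQ}$ of rank $\deg\tilde{g}$ produces $\cC_{\tilde{\cQ}} \neq \emptyset$. Corollary~\ref{complementary2} then yields $p_{B_1}, p_{B_2} \in \cC_{\tilde{\cQ}}$ with $p_{B_1} \times p_{B_2} : \tilde{A}_Z \to B_1 \times B_2$ an isogeny onto its image. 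Combined with maximal Albanese dimension $\dim \tilde{A}_Z \geq 2$ and the strict inequalities $\dim B_i < \dim \tilde{A}_Z$, together with the general type property of each $\tilde{Z}_{B_i}$ extracted from (\ref{explanation}), this pins down $\dim \tilde{A}_Z = 2$ and $\dim B_1 = \dim B_2 = 1$. In particular $\tilde{A}_Z$ will then coincide with the Albanese of $\tilde{Z}$.

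Next, the universal property of the Albanese applied to $\tilde{Z} \xrightarrow{f} C \hookrightarrow J(C)$ gives a morphism of abelian varieties $\phi : \tilde{A}_Z \to J(C)$. Since $C$ generates $J(C)$ via Abel--Jacobi, the image $\phi(\tilde{A}_Z)$ is an abelian subvariety containing a translate of $C$, hence equal to $J(C)$; this forces $g(C) \leq \dim \tilde{A}_Z = 2$, so $g(C) = 2$ and $\phi$ is an isogeny of abelian surfaces. But then the image of $\tilde{Z}$ in $J(C)$ under $\phi \circ \tilde{g}$ equals the curve $C$, so $\tilde{g}(\tilde{Z})$ is contained in the finite preimage $\phi^{-1}(C)$, a $1$-dimensional subvariety of $\tilde{A}_Z$. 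This contradicts maximal Albanese dimension of $\tilde{Z}$, which requires $\tilde{g}(\tilde{Z})$ to be $2$-dimensional.

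The main obstacle is the dimension reduction in the first step, namely making precise that $\dim \tilde{A}_Z = 2$ and that $\tilde{A}_Z$ actually coincides with $\mathrm{Alb}(\tilde{Z})$ despite the caveat in the preliminaries that the base-change morphism to an étale cover of the Albanese need not itself be an Albanese map. This is where the hypotheses $\deg \tilde{t} = 2$ and $\chi(\tilde{X}, \omega_{\tilde{X}}) = \chi(\tilde{Z}, \omega_{\tilde{Z}})$ must be used essentially, through the interaction between (\ref{dec-surface}), (\ref{explanation}), and Corollary~\ref{complementary2}; if the cleaner dimension count fails, one would have to work directly with $\mathrm{Alb}(\tilde{Z})$ and run a finer analysis showing that a fibration $\tilde{Z} \to C$ with $g(C) \geq 2$ forces a summand in (\ref{dec-surface}) whose base abelian variety has dimension $\geq 2$, contradicting the strictly lower-dimensional shape of the $B_i$ appearing there.
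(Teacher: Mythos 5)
There is a genuine gap, and it sits exactly where you flagged the ``main obstacle'': Step 1 does not establish $\dim \tilde{A}_Z = 2$. From $p_{B_1}\times p_{B_2}:\tilde{A}_Z\to B_1\times B_2$ being an isogeny onto its image you only get $\dim B_1+\dim B_2\geq \dim\tilde{A}_Z$ with $\dim B_i<\dim\tilde{A}_Z$, which is perfectly consistent with, say, $\dim\tilde{A}_Z=3$ and $\dim B_1=\dim B_2=2$; nothing you cite forces the $B_i$ to be elliptic curves. In the paper the fact that every $B$ in (\ref{dec-surface}) is an elliptic curve is deduced \emph{from} the claim (``Hence any $B$ appearing in (\ref{dec-surface}) is an elliptic curve\ldots''), so using it to prove the claim is circular. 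Worse, your Step 2 uses neither $\deg\tilde{t}=2$ nor $\chi(\tilde{X},\omega_{\tilde{X}})=\chi(\tilde{Z},\omega_{\tilde{Z}})$: it is a pure dimension count valid for any surface with $q=2$. Since surfaces of maximal Albanese dimension with $q\geq 3$ do fiber over curves of genus $\geq 2$ (e.g.\ a product of a genus-$2$ curve with an elliptic curve), any correct proof must bring those two hypotheses to bear at the point where the fibration is excluded, not merely to shrink the Albanese; reducing to $q(\tilde{Z})=2$ is essentially as hard as the whole of Proposition \ref{technical}. (A smaller issue: Lemma \ref{complementary} and Corollary \ref{complementary2} are stated for the eventual paracanonical map, whereas here $t$ is an arbitrary degree-$2$ morphism, so you would at least need to note that their proofs only use $\cL=\omega_Z$ when $\chi(X,\omega_X)=\chi(Z,\omega_Z)$.)

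The paper's argument is direct and avoids all of this: given a fibration $h_1:\tilde{Z}\to C$ with $g(C)\geq 2$ and the induced $h_2:\tilde{X}\to C$, one has $h_{1*}(\omega_{\tilde{Z}}\otimes Q)\hookrightarrow h_{2*}(\omega_{\tilde{X}}\otimes Q)$ for general $Q$, and these have different ranks because $\deg\tilde{t}\geq 2$. Fujita's theorem makes $h_{i*}(\omega_{\cdot/C}\otimes Q)$ nef, so Riemann--Roch on $C$ (here $g(C)\geq 2$ is used, via $\deg\omega_C>0$) shows $h^0$ of these twisted pushforwards strictly increases with the rank; hence $h^0(\tilde{X},\omega_{\tilde{X}}\otimes Q)>h^0(\tilde{Z},\omega_{\tilde{Z}}\otimes Q)$, contradicting $\chi(\tilde{X},\omega_{\tilde{X}})=\chi(\tilde{Z},\omega_{\tilde{Z}})$. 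Your closing remark about forcing ``a summand in (\ref{dec-surface}) whose base abelian variety has dimension $\geq 2$'' points vaguely in this direction (a genus $\geq 2$ quotient contributes extra generic sections), but as written that analysis is not carried out, and without it the proof does not close.
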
 
\begin{proof}[Proof of the claim]
If the claim is not true, there exists a fibration $h_1: \tilde{Z}\rightarrow C$ and denote by $h_2: \tilde{X}\rightarrow C$ be the induced morphism. Then for $Q\in \Pic^0(A_Z)$ general, we have an inclusion of vector bundles $h_{1*}(\omega_{\tilde{Z}}\otimes Q)\hookrightarrow  h_{2*}(\omega_{\tilde{X}}\otimes Q)$. Since $\tilde{t}$ is not birational, the two sheaves have different rank. Moreover, by Fujita's theorem (see for instance \cite{Vie}), both $h_{1*}(\omega_{\tilde{Z}/C}\otimes Q)$  and $h_{2*}(\omega_{\tilde{X}/C}\otimes Q)$ are nef. Then by Riemann-Roch, the global sections of these two sheaves are different. This is a contradiction to the assumption that $\chi(X, \omega_X)=\chi(Z, \omega_Z)$.
\end{proof}

Hence any $B$ appearing in (\ref{dec-surface}) is an elliptic curve, and for any $p_B$ in (\ref{dec-surface}), the induced morphism $\tilde{Z}\rightarrow B$ is a fibration over the elliptic curve $B$.
 Let $\tilde{X}\xrightarrow{q} \tilde{X}_B\xrightarrow{t_B} B$ be the Stein factorization of the morphism $\tilde{X}\rightarrow B$. 
 By (\ref{explanation}), we know  that $t_{B*}\omega_{\tilde{X}_B}=\omega_B\oplus \cF_B$.

In particular, $\deg t_B\geq 2$.  Since $\deg \tilde{t}=2$ and $\tilde{t}$ factors through $\tilde{X}\rightarrow \tilde{X}_B\times_B\tilde{Z}\rightarrow \tilde{Z}$, we conclude that $t_B$ is also a double cover and $\cF_B$ is a line bundle on $B$.

We denote by $m=\deg a_Z=\deg \tilde{g}=\rank \tilde{g}_*\tilde{\cQ}>1$. Then we see that there are at least two fibrations $p_{B_1}$ and $p_{B_2}$ in the decomposition (\ref{dec-surface}). Let $C_i=X_{B_i}$, $\tau_i$ the involutions of the double covers $C_i\rightarrow B_i$, and $\tau$ the involution of $\tilde{t}$. Then we have the $\mathbb{Z}/2\mathbb{Z}$-equivariant morphism $\tilde{X}\rightarrow C_1\times C_2$ with the commutative diagram:
\begin{eqnarray*}
\xymatrix{
\tilde{X}\ar[d]\ar[r]^{\tilde{t}} & \tilde{Z}\ar[d]\ar[r] & \tilde{A}_{Z}\ar[d]^{\pi}\\
C_1\times C_2\ar[r] & (C_1\times C_2)/\langle\tau_1\times\tau_2\rangle\ar[r] & B_1\times B_2.}
\end{eqnarray*} 

We claim that $\cC_{\tilde{\cQ}}=\{p_{B_1}, p_{B_2}\}$. We argue by contradiction. Assume $p_{B_3}\in \cC_{\tilde{\cQ}}$. Then we have a double cover $C_3\rightarrow B_3$ with the corresponding involution $\tau_3$. 
Let $\tilde{C_1\times C_2}$ and $\tilde{(C_1\times C_2)/\langle\tau_1\times \tau_2\rangle}$ be respectively the \'etale cover of $C_1\times C_2$ and $(C_1\times C_2)/\langle\tau_1\times \tau_2\rangle$ induced by $\pi$. 
The induced morphism $\tilde{C_1\times C_2}\rightarrow B_3$ is a  fibration by \cite[Lemma 5.3]{CDJ}. Let $X_1$ be a desingularization of the main component of $(\tilde{C_1\times C_2})\times_{B_3}C_3$.    
We shall still denote by $\tau_i$ the natural lift of $\tau_i$-action of $C_i$ on $X_1$. 
Then the natural morphism $\tilde{X}\rightarrow X_1$ is a $(\mathbb{Z}/2\mathbb{Z})$-equivaraint morphism with the $\tau$-action on $\tilde{X}$ and the $(\tau_1\times\tau_2\times\tau_3)$-action on $X_1$. Obviously, the natural morphism $a_1: X_1\rightarrow \tilde{A}_Z$ is  a birational $G:=(\mathbb{Z}/2\mathbb{Z})^3$-cover, 
where $\tau_1$, $\tau_2$, and $\tau_3$ are the generator of $G$. 
Let $\chi_i\in G^*$ be the character such that $\chi_i(\tau_i)=-1$ and $\chi_i(\tau_j)=1$ for $j\neq i$. 
 Let $Z_1=X_1/\langle \tau_1\times\tau_2\times\tau_3\rangle$ and consider the commutative diagram:
\begin{eqnarray*}
\xymatrix{
\tilde{X}\ar@/^2pc/[rr]^{\tilde{a}}\ar[d]\ar[r]^{\tilde{t}} & \tilde{Z}\ar[d]\ar[r]^{\tilde{g}} & \tilde{A}_{Z}\ar@{=}[d]\\
X_1\ar@/_2pc/[rr]^{a_1}\ar[r]^{t_1} &  Z_1\ar[r]^{g_1} & \tilde{A}_{Z}.}
\end{eqnarray*}
Let us write $t_{1*}\omega_{X_1}=\omega_{Z_1}\oplus\cQ_1$. 
We have a decomposition $a_{1*}\omega_{X_1}=\bigoplus_{\chi\in G^*}\cL_{\chi}$ as the eigensheaves of the action of $G$.
For $\{i, j, k\}=\{1,2,3\}$, the direct summand $p_{B_i}^*\cF_{B_i}$ is $\tau_j$ and $\tau_k$ invariant, hence  it is isomorphic to $\cL_{\chi_i}$.
Moreover, the sheaf $g_{1*}\omega_{Z_1}$ is the $(\tau_1\times\tau_2\times\tau_3)$-invariant part and thus $$g_{1*}\cQ_1=\cL_{\chi_1}\oplus \cL_{\chi_2}\oplus\cL_{\chi_3}\oplus\cL_{\chi_1\chi_2\chi_3}.$$ 

We claim that $\cL_{\chi_1\chi_2\chi_3}$ is M-regular on $\tilde{A}_Z$. Otherwise, $\cL_{\chi_1\chi_2\chi_3}$ is a line bundle pulled back from an elliptic curve by the decomposition (\ref{dec-surface}). 
One the other hand, the sheaf ${a_1}_* \mathcal{O}_{\tilde{X}}=\cHom({a_1}_* \omega_{\tilde{X}}, \mathcal{O}_{\tilde{A}_Z})$ is an algebra, whose multiplication is compatible with the $G$-action. 
By considering the multiplication we get that $\cL_{\chi_j\chi_k}\hookrightarrow \cL_{\chi_i}\cL_{\chi_1\chi_2\chi_3}$ for any $\{i, j, k\}={1, 2, 3}$. Moreover, by the structure of $X_1$, we have $\cL_{\chi_j\chi_k}=\cL_{\chi_j}\otimes\cL_{\chi_k}$, for any $1\leq j<k\leq 3$. Hence  $\cL_{\chi_1}\cL_{\chi_2}\cL_{\chi_3}\hookrightarrow\cL_{\chi_1\chi_2\chi_3}^{\otimes 3} $, which is a contradiction. 

We then get a contradiction as following. Note that, being  respectively the anti-invariant parts of $\tilde{a}_*\omega_{\tilde{X}}$ and $a_{1*}\omega_{X_1}$,  $g_{1*}\cQ\hookrightarrow \tilde{g}_*\tilde{\cQ}$. However, $g_{1*}\cQ$ contains the M-regular sheaf $\cL_{\chi_1\chi_2\chi_3}$, this is not compatible with $(\ref{dec-surface})$.

We can now conclude the proof. As $\mathcal{C}_{\tilde{\cQ}}=\{p_{B_1}, p_{B_2}\}$. We see by $(\ref{dec-surface})$ that $\deg a_Z=\deg \tilde{g}=2$. Hence $\tilde{Z}$ is birational to the \'etale cover of $(C_1\times C_2)/\langle\tau_1\times\tau_2\rangle$ induced by $\pi$ and similarly $\tilde{X}$ is birational to the \'etale cover of $C_1\times C_2$ induced by $\pi$. 
 \end{proof}
\begin{rema}\label{relation}
Assume that $t: X\rightarrow Z$ is a generically finite and surjective morphism of degree $>1$ between smooth varieties of maximal Albanese dimension. Then the hypothesis that $\chi(X, \omega_X)=\chi(Z, \omega_Z)>0$ is related to varieties $Y$ of general type, of  maximal Albanese dimension, and of $\chi(Y, \omega_Y)=0$, which have been studied in \cite{CDJ, CJ}.

Indeed, assume moreover that $t$ is a birational $G$-cover and we have a $G$-cover $s: C\rightarrow E$ from a smooth curve of genus $\geq 2$ to an elliptic curve. Write $t_*\omega_X=\omega_Z\oplus \cQ_1$ and $s_*\omega_C=\cO_E\oplus \cQ_2$. Note that,by the assumption that $\chi(X, \omega_X)=\chi(Z, \omega_Z)$, $a_{Z*}\cQ_1$ has no M-regular direct summand.

Let $Y$ be a smooth model of the diagonal quotient $(X\times C)/G$. We claim that \\
{\em $Y$ is  of general type, of  maximal Albanese dimension, and  $\chi(Y, \omega_Y)=0$.}\\

Consider the following commutative  diagram
\begin{eqnarray*}
\xymatrix{& Y\ar[d]^{\epsilon}\ar[drr]^{\varphi}\\
X\times C\ar[r] \ar@/_2pc/[rrr]^{\rho}& (X\times C)/G\ar[r] & Z\times E\ar[r]_{a_Z\times \mathrm{id}_E} & A_Z\times E.}
\end{eqnarray*}
Since $(X\times C)/G$ has quotient singularities, $\epsilon_*\omega_Y=\omega_{(X\times C)/G}$. Hence 
$$\varphi_*\omega_Y=(\rho_*\omega_{X\times C})^G=(a_Z\times\mathrm{Id}_E)_*\omega_{Z\times E}\bigoplus (a_{Z*}\cQ_1\boxtimes \cQ_2)^G.$$

We then deduce that the M-regular part $\cF^Y=0$. Hence $\chi(Y, \omega_Y)=0$. Moreover,  $Y$ has a fibration to $Z$, whose general fiber is $C$.  Iitaka's $C_{nm}$ conjecture about subadditivity of Kodaira dimensions is solved when the base is general type (see \cite{V}). Thus $Y$ is also of general type and we finish the proof of the claim.\end{rema}

\begin{rema}\label{easyproof}
We can apply this observation to give a short proof of Proposition  \ref{technical}. Indeed, under the assumption of Proposition \ref{technical}, $t$ is a birational $\mathbb{Z}/2\mathbb{Z}$-cover. We then take a bi-eliptic curve $C\rightarrow E$ with $g(C)\geq 2$ and let $Y$ be a smooth model of the diagonal quotient $(X\times C)/(\mathbb{Z}/2\mathbb{Z})$. As we have seen that $Y$ is of general type, of maximal Albanese dimension, and $\chi(Y, \omega_Y)=0$. By the main result of \cite{CDJ}, we know that a finite abelian \'etale cover of  $Y$ is birational to an Ein-Lazarsfeld threefold $(C_1\times C_2\times C_3)/\langle\tau_1\times \tau_2\times\tau_3\rangle$, where $C_i$ are bi-elliptic curves with bi-elliptic involution $\tau_i$. It is then easy to check that a finite abelian \'etale cover of $X$ (resp. $Z$) is birational to $C_i\times C_j$ (resp. $(C_i\times C_j)/\langle\tau_i\times\tau_j\rangle$) for some $i,j\in\{1, 2, 3\}$.
 \end{rema}  

\begin{prop}\label{non-gt}Assume that $\dim X=2$, then either $\varphi_X$ is birational or $\chi(Z, \omega_Z)=0$.
\end{prop}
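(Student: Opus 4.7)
\textbf{Proof plan for Proposition~\ref{non-gt}.}

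I argue by contradiction, assuming $\deg\varphi_X\geq 2$ and $\chi(Z,\omega_Z)>0$, and plan to reduce to a situation where Corollary~\ref{criteria} yields a contradiction. First, the Barja--Pardini--Stoppino dichotomy (A) recalled in the introduction forces $\chi(Z,\omega_Z)=\chi(X,\omega_X)>0$, and their bound (B) in the equal-characteristic case gives $\deg\varphi_X=2$. Since $a_X: X\to Z\to A_X$ is generically finite onto its image, $Z$ also has maximal Albanese dimension; combined with $\chi(Z,\omega_Z)>0$, the classification of surfaces (a surface of maximal Albanese dimension with $\kappa\leq 1$ has $\chi(\omega)=0$) forces $Z$ to be of general type, so we are in the setting of Proposition~\ref{technical}.

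I would then apply Proposition~\ref{technical} to $t=\varphi_X$. This supplies bielliptic curves $C_i\to B_i$ with $g(C_i)\geq 2$ and an abelian \'etale cover $\pi:\tilde A_Z\to A_Z$ for which the base-changed surface $\tilde X$ is birational to the induced \'etale cover of $C_1\times C_2$. Because each Albanese map $C_i\hookrightarrow J(C_i)$ is an embedding, the product embeds in $J(C_1)\times J(C_2)$, and \'etale base change preserves this embedding; thus, after choosing appropriate smooth models, the Albanese morphism of $\tilde X$ is birational onto its image, so $\deg(\tilde X/a_{\tilde X}(\tilde X))=1$.

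Next, since $\chi(\tilde X,\omega_{\tilde X})=\deg(\pi)\cdot\chi(X,\omega_X)>0$, the decomposition formula (\ref{decomposition}) applied to $\tilde a_{X*}\omega_{\tilde X}$ shows that $h^0(\cF^{\tilde X}\otimes P)=\chi(\tilde X,\omega_{\tilde X})>0$ for generic $P$, so $\cF^{\tilde X}\neq 0$ and $\rank\cF^{\tilde X}\geq 1$. Corollary~\ref{criteria} then applies: $\deg(\tilde X/a_{\tilde X}(\tilde X))=1<2\leq 2\rank\cF^{\tilde X}$ implies that the eventual paracanonical map of $\tilde X$ is birational. But the eventual paracanonical map is compatible with abelian \'etale base change (as clear from its construction via $m_d$, see Remark~\ref{strategy}), hence it coincides with the base change $\tilde\varphi_X$ of $\varphi_X$, which has degree $\deg\varphi_X=2$. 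This contradiction completes the proof.

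The main obstacle I anticipate is ensuring that the passage to $\tilde X$ is clean enough: one must verify that, starting from the embedding $C_1\times C_2\hookrightarrow J(C_1)\times J(C_2)$, both the \'etale cover and the birational modification implicit in Proposition~\ref{technical} really preserve the property $\deg(\tilde X/a_{\tilde X}(\tilde X))=1$, so that Corollary~\ref{criteria} can be applied to $\tilde X$ with the bound $2\rank\cF^{\tilde X}\geq 2$.
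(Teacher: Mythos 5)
Your reduction to the case $\chi(X,\omega_X)=\chi(Z,\omega_Z)>0$, $\deg\varphi_X=2$, $Z$ of general type, and the application of Proposition~\ref{technical} all match the paper. The gap is in your final step, where you identify the eventual paracanonical map of $\tilde X$ with the base change $\tilde\varphi_X$ of $\varphi_X$. These are two different maps here. By definition the eventual paracanonical map of $\tilde X$ is $\varphi_{K_{\tilde X},\,a_{\tilde X}}$, taken with respect to the Albanese morphism of $\tilde X$; but Proposition~\ref{technical} makes $\tilde X$ birational to an \'etale cover of $C_1\times C_2$ with $g(C_i)\geq 2$, so $q(\tilde X)\geq 4>2=\dim\tilde A_X$ and $A_{\tilde X}\neq \tilde A_X$. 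The base change $\tilde\varphi_X$ is instead the eventual map of $K_{\tilde X}$ with respect to the induced primitive morphism $\tilde a_X:\tilde X\to\tilde A_X$ (this is exactly what Remark~\ref{strategy} asserts: $\tilde\varphi_X$ is induced by $|K_{\tilde X}\otimes Q|$ for $Q\in\widehat{\tilde A}_X$ general, not $Q\in\Pic^0(\tilde X)$ general). The remark following Lemma~\ref{characterization} only gives that $\varphi_{K_{\tilde X},\tilde a_X}$ factors \emph{through} $\varphi_{\tilde X}$, i.e.\ $\deg\varphi_{K_{\tilde X},\tilde a_X}\geq\deg\varphi_{\tilde X}$, so knowing $\varphi_{\tilde X}$ is birational (which your Corollary~\ref{criteria} argument does correctly establish, and which is anyway immediate since $a_{\tilde X}$ is generically injective) places no upper bound on $\deg\tilde\varphi_X$. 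Indeed on $\tilde X\sim \tilde C_1\times\tilde C_2$ there is no contradiction at all: its Albanese map is an embedding, its eventual paracanonical map is birational, and yet the eventual map relative to $\tilde A_X$ is the degree-$2$ quotient — this is precisely the phenomenon of the Example in the introduction.

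The paper closes the argument differently: from Proposition~\ref{technical} it reads off that $\deg g=\deg(Z/A_X)=2$, and then invokes \cite[Lemma 5.1]{BPS2} to conclude that the eventual paracanonical map of $Z$ is birationally equivalent to $g$, hence of degree $2$; on the other hand $\chi(X,\omega_X)=\chi(Z,\omega_Z)$ forces $\dim|K_X\otimes Q|=\dim|K_Z\otimes Q|$, so the eventual paracanonical map of $Z$ must be birational — a contradiction. If you want to keep your strategy you would need an input of this kind about $Z$ (or about the map relative to $\tilde A_X$ itself), not about the eventual paracanonical map of $\tilde X$ relative to its own, much larger, Albanese variety.
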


\begin{proof}
We just need to prove that if $\chi(X, \omega_X)=\chi(Z, \omega_Z)>0$,  $\varphi_X$ is birational. We argue by contradiction.

 Assume that $\varphi_X: X\rightarrow Z$ is the eventual paracanonical map, $\deg \varphi_X>1$, and $\chi(X, \omega_X)=\chi(Z, \omega_Z)>0$. Then $\dim |K_X\otimes Q|=\dim |K_Z\otimes Q|$ for $Q\in\Pic^0(A_X)$  general and the eventual paracanonical map of $Z$ is birational.

On the other hand, we  apply Proposition \ref{technical} to conclude that $g: Z\rightarrow A_X$ is of degree $2$. By \cite[Lemma 5.1]{BPS2}, the eventual paracanonical map of $Z$ is birational equivalent to $g$, which is a contradiction.
 \end{proof}
 \subsection{Complete descriptions of $\varphi_X$ of surfaces}
Barja, Pardini, Stoppino constructed surfaces $X$ such that $\deg \varphi_X=2$ or $4$. We will show in the following that $\deg \varphi=3$ can never happen and the structure of $
\varphi_X$ is quite restrictive when $\deg \varphi_X=4$.
 
\begin{lemm}\label{image}Let $X$ be a surface. If $\deg \varphi_X>2$,  $g$ is birational.
\end{lemm}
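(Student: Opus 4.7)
The plan is to argue by contradiction: assume $\deg \varphi_X > 2$ and, for the sake of contradiction, that $g: Z \to A_X$ has degree $\geq 2$. The strategy is to violate the maximality property of $\deg \varphi_X$ in Lemma \ref{characterization} by producing the factorization $a_X: X \to A_X$ itself as a competitor with strictly larger degree.

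First, since $\deg \varphi_X > 1$, Proposition \ref{non-gt} forces $\chi(Z, \omega_Z) = 0$. Writing $\varphi_{X*}\omega_X = \omega_Z \oplus \cQ$, the M-regular part $\cF^X$ of $a_{X*}\omega_X = g_*\omega_Z \oplus g_*\cQ$ sits inside $g_*\cQ$, because $\chi(Z, \omega_Z) = 0$ forces $h^0(g_*\omega_Z \otimes P) = 0$ for general $P \in \Pic^0(A_X)$. Next, I would apply Lemma \ref{line} to obtain a line bundle $\cL$ on $Z$ with $\cL \hookrightarrow \varphi_{X*}\omega_X$ and $\cF^X \hookrightarrow g_*\cL$. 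Since $\cL$ is torsion-free of rank $1$ on smooth $Z$, the projection $\cL \to \omega_Z$ coming from the splitting is either zero or injective; if it were injective, one would get $g_*\cL \hookrightarrow g_*\omega_Z$ and hence $\cF^X \hookrightarrow g_*\omega_Z$, contradicting $h^0(\cF^X \otimes P) = \chi(X, \omega_X) > 0$ for general $P$. Therefore $\cL$ embeds into $\cQ$.

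Consider now the factorization $a_X: X \to A_X$ (the trivial one, with $Z$ replaced by $A_X$). Its degree is $\deg \varphi_X \cdot \deg g > \deg \varphi_X$, so if it satisfies conditions (1) and (2) of Lemma \ref{characterization}, we obtain the sought contradiction. Condition (1) is automatic. Condition (2) requires a rank-$1$ subsheaf $\cL' \hookrightarrow a_{X*}\omega_X$ with $h^0(\cL' \otimes P) = \chi(X, \omega_X)$ for general $P$. If $\rank \cF^X = 1$, then $\cF^X$ itself is a line bundle, and by M-regularity it satisfies $h^0(\cF^X \otimes P) = \chi(X, \omega_X)$ generically, so $\cL' := \cF^X$ closes the argument.

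The main obstacle is therefore establishing $\rank \cF^X = 1$. I would combine the inequality $\rank \cF^X \leq \deg g$ from Lemma \ref{line}, the non-emptiness of $\cC_{\cQ}$ from Corollary \ref{fiber}, and the base-change formula (\ref{explanation}) applied to each $p_B \in \cC_{\cQ}$, which produces a fibration $X \to X_B \to B$ with $B$ an elliptic curve and $X_B$ a curve of general type. The surface setting, in which every non-trivial quotient $p_B$ of $A_X$ is an elliptic quotient, sharply restricts the decomposition of $a_{X*}\omega_X$; combined with the constraint that $\cF^X$ embeds into the rank-$\deg g$ sheaf $g_*\cL \subset g_*\cQ$ while controlling the generic number of sections, I expect this to force $\rank \cF^X = 1$. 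Should the direct rank analysis prove too rigid, an alternative is to construct $\cL'$ by Galois-invariants: since $\cL$ descends from $\cO(1)$ on the canonical $G$-equivariant embedding of $Z^{(d)}$ in projective space, it is preserved by the deck transformations of the Galois closure of $g$ (after suitable \'etale pullback), and the corresponding invariant rank-$1$ subsheaf of $g_*\cL$ on $A_X$ provides the desired $\cL'$.
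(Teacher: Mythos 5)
Your reduction is sound as far as it goes: by the maximality in Lemma \ref{characterization}, $g$ can fail to be birational only if the factorization $X\rightarrow a_X(X)$ violates condition (2), and since the image of the continuous evaluation map is exactly $\cF^X$ (\cite[Proposition 2.13]{PP}), condition (2) for that competitor is \emph{equivalent} to $\rank\cF^X=1$. So your argument converts the lemma into the assertion $\rank\cF^X=1$, and that is precisely the step you do not prove. The heuristic you offer for it rests on the claim that in the surface case every quotient $p_B$ appearing in the decomposition of $a_{X*}\omega_X$ is an elliptic quotient. This is only true when $q(X)=2$, and it fails exactly in the case that carries all of the difficulty, namely $\kappa(Z)=1$: there, after a suitable abelian \'etale cover, $\tilde{Z}$ is an elliptic surface over a curve $C$ of genus $\geq 2$, so $q(X)\geq g(C)+1\geq 3$, $a_X(X)$ is a proper subvariety of $A_X$, and the relevant quotient is the $g(C)$-dimensional $JC$ rather than an elliptic curve. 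The fallback via Galois-invariants of $\cL$ is too vague to assess; the invariant rank-one piece of $g_*\cL$ has no reason to retain the full generic $h^0$, which is exactly what condition (2) demands.

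The paper's proof is organized differently and contains an argument for which nothing in your proposal substitutes. Proposition \ref{non-gt} gives $\chi(Z,\omega_Z)=0$, hence $\kappa(Z)\leq 1$; if $\kappa(Z)=0$ the universal property of $a_X$ immediately identifies $Z$ with $A_X$; and the case $\kappa(Z)=1$ is excluded as follows. Passing to an \'etale cover where $\tilde{Z}$ is an elliptic fibration over a genus $\geq 2$ curve $C$, one shows $\cM=\cL\oplus\cM'$ with $\cM'\neq 0$ not M-regular and supported on $\tilde{Z}$, then applies Koll\'ar's theorem $\omega_{C'}=R^1q_{C'*}\omega_{\tilde{X}}$ to the Stein factorization $C'\rightarrow C$ of $\tilde{X}\rightarrow C$ to get $\deg t\geq k-1$, while the general-type fibers of $q_{C'}$ force $k\geq 2\deg t$; together these give $k\leq 2$. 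Until you supply an argument of comparable force for $\rank\cF^X=1$ (or directly exclude $\kappa(Z)=1$), the proof has a genuine gap.
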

\begin{proof}
If $\kappa(Z)=0$, then $Z$ is birational to an abelian surface and we may assume that $Z$ is an abelian surface. By the universal property of $a_X$, we conclude that $Z\simeq A_X$ and $g$ is an isomorphism.
  By Proposition \ref{non-gt}, $\kappa(Z)\leq 1$.  Hence we just need to rule out the case that $\kappa(Z)=1$.  We argue by contradiction. Assume that $\deg \varphi_X>2$ and $\kappa(Z)=1$. We may assume that $Z$ is minimal. In this case, we can  take a finite abelian \'etale cover $\pi: \tilde{Z}\rightarrow Z$ such that $\tilde{Z}$ is a smooth elliptic fibration over a genus $\geq 2$ curve $C$ and $\tilde{Z}$ is embedded in $A_{\tilde{Z}}$. Let $\tilde{X}:=X\times_Z\tilde{Z}$ be the induced \'etale cover. Note that, since $a_X$ factors through $\varphi_X$, $\tilde{X}$ is connected and $\deg(\tilde{X}/\tilde{Z})=\deg \varphi_X>2$.
  
 We then consider the commutative diagram:
\begin{eqnarray*}
\xymatrix{
\tilde{X}\ar@/^2pc/[rr]^{f}\ar[d]^{q_{C'}} \ar[r]^{\tilde{\varphi}_X} & \tilde{Z} \ar[d]^{q_C}\ar@{^{(}->}[r] & A_{\tilde{Z}}\ar[d] \\
C'\ar[r]^t & C\ar@{^{(}->}[r]  & JC,}
\end{eqnarray*} 
where $q_{C'}$ is the Stein factorization of $\tilde{X}\rightarrow C$.
We denote by $k=\deg \varphi_X=\deg \tilde{\varphi}_X$.

 We consider $\tilde{\varphi}_{X*}\omega_{\tilde{X}}=\omega_{\tilde{Z}}\oplus \cM$. Let $\cL$ be the torsion-free rank $1$ sheaf in Lemma \ref{line}. 
Then $\pi^*\cL\hookrightarrow \tilde{\varphi}_{X*}\omega_{\tilde{X}}$. Since $\kappa(\tilde{Z})=1$, we have $\pi^*\cL\hookrightarrow \cM$. Moreover, since $\pi$ is an \'etale cover, \begin{eqnarray}\label{equal1}\chi(\tilde{X}, \omega_{\tilde{X}})=h^0_{a_{\tilde{Z}}}(\pi^*\cL).\end{eqnarray}
We then apply the decomposition theorem to $f_*\omega_{\tilde{X}}=\tilde{\varphi}_{X*}\omega_{\tilde{X}}$.  By (\ref{equal1}), we conclude that $\pi^*\cL=\cF^{\tilde{X}}$ is the M-regular part of $f_*\omega_{\tilde{X}}$. Since $\rank \cM=k-1>1$, we conclude that $\cM=\cL\oplus \cM'$, where $\cM'\neq 0$ is  not M-regular.
 
 Note that $\cM'$ is supported on $\tilde{Z}$ and hence after further abelian \'etale cover of $\tilde{Z}$, we may write $\cM'=q_C^*\cM_C$.
By Koll\'ar's theorem \cite[Proposition 7.6]{K1}, $\omega_{C'}=R^1q_{C'*}\omega_{\tilde{X}}$.  Then $t_*\omega_{C'}=t_*R^1q_{C'*}\omega_{\tilde{X}}=R^1q_{C*}\tilde{\varphi}_{X*}\omega_{\tilde{X}}\supseteq \omega_C\oplus \cM_C$. Thus $\deg t\geq k-1$. On the other hand, $\tilde{X}$ is of general type, hence a general fiber of $q_{C'}$
is a curve of genus $\geq 2$. Thus $k=\deg \tilde{\varphi}_X\geq 2\deg t\geq 2(k-1)$ and hence $k\leq 2$, which is a contradiction.

\end{proof}

\begin{theo}\label{surface}
Let $X$ be a surface such that the eventual paracanonical $\varphi_X$ is not birational. Then either $\deg \varphi_X=2$ and $Z$ can be any surface of maximal Albanese dimension with $0\leq \kappa(Z)\leq 1$ or $\varphi_X$ is  a birational $(\mathbb{Z}/2\mathbb{Z})^2$-cover, $Z$ is birational to $A_X$, and an \'etale cover of $X$ is birational to a product of two bielliptic curves.
\end{theo}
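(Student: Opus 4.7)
The plan is to use the bound $\deg \varphi_X \leq 4$ from \cite{BPS2} and analyze separately the cases $\deg \varphi_X \in \{2, 3, 4\}$, ruling out the middle one.

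For $\deg \varphi_X = 2$, Proposition \ref{non-gt} gives $\chi(Z, \omega_Z) = 0$. Since $a_X$ factors through the generically finite morphism $g: Z \to A_X$, $Z$ is of maximal Albanese dimension. Because a surface of maximal Albanese dimension and of general type has $\chi(\omega) > 0$ (a standard consequence of generic vanishing), we conclude $\kappa(Z) \leq 1$; the inequality $\kappa(Z) \geq 0$ is automatic since such a surface cannot be ruled. This yields the first alternative.

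For $\deg \varphi_X \geq 3$, Lemma \ref{image} gives that $g$ is birational, so we may identify $Z$ with $A_X$ birationally. By Corollary \ref{fiber} we pick some $p_{B_1} \in \cC_X$; since $\dim A_X = 2$, the target $B_1$ is an elliptic curve. Take a sufficiently divisible \'etale cover so that the relevant torsion twists become trivial, and set $K_1 = \ker(p_{B_1})$. Passing to Stein factorizations, the multiplicative identity
\begin{eqnarray*}
\deg \varphi_X \;=\; \deg\bigl(\tilde X \to \tilde X_{B_1} \times_{B_1} \tilde A_X \bigr) \cdot \deg\bigl(\tilde X_{B_1}/B_1\bigr)
\end{eqnarray*}
together with $\deg(\tilde X_{B_1}/B_1) \geq 2$ (forced by $p_{B_1} \in \cC_X$, which makes $\tilde X_{B_1}$ of general type) severely constrains the factorization. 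When $\deg \varphi_X = 3$, the only option is $\deg(\tilde X_{B_1}/B_1) = 3$ with $\tilde X$ birational to $\tilde X_{B_1} \times K_1$; the projection $\tilde X \to K_1$ then has connected fibers, and Lemma \ref{complementary} applied to $\tilde\varphi_{B_1}$ of degree $3 > 1$ produces $p_{B_2} \in \cC_X$ with $K_1 \to B_2$ an isogeny, forcing the associated Stein image $\tilde X_{B_2}$ to equal $K_1$. But $K_1$ is an elliptic curve, contradicting the requirement that $\tilde X_{B_2}$ be of general type. Hence $\deg \varphi_X = 3$ is impossible.

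For $\deg \varphi_X = 4$ the same trick rules out the factorization with $\deg(\tilde X \to \tilde X_{B_1} \times_{B_1} \tilde A_X) = 1$ and $\deg(\tilde X_{B_1}/B_1) = 4$, leaving only $(2, 2)$. Thus $\tilde X_{B_1} \to B_1$ is a bielliptic double cover; Lemma \ref{complementary} supplies $p_{B_2} \in \cC_X$ with $K_1 \to B_2$ an isogeny, and the symmetric analysis forces $\tilde X_{B_2} \to B_2$ to be another bielliptic double cover. After a further \'etale base change identifying $\tilde A_X$ with $B_1 \times B_2$, the natural morphism $\tilde X \to \tilde X_{B_1} \times \tilde X_{B_2}$ (obtained by combining the two Stein factorizations) is a surjection between irreducible surfaces of equal degree $4$ over $\tilde A_X$, hence birational. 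The existence of two distinct intermediate double covers $\tilde X_{B_1}, \tilde X_{B_2}$ rules out $\mathbb{Z}/4\mathbb{Z}$ as the Galois group of $\tilde\varphi_X$, leaving $(\mathbb{Z}/2\mathbb{Z})^2$; this structure descends through the \'etale cover to give the claimed description of $\varphi_X$. The main obstacle will be carefully controlling the Stein factorizations and fiber products across the several layers of \'etale covers, and justifying the symmetric application of Lemma \ref{complementary} in the $(2,2)$ step.
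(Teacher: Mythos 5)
Your strategy reaches the right conclusions but runs along a noticeably different track from the paper's. The paper, after invoking Lemma \ref{image} to reduce to $Z=A_X$, writes $a_{X*}\omega_X=\cO_{A_X}\oplus\cL\oplus\cM$ and does all the work by bookkeeping the rank of the non-M-regular part $\cM$: for $\deg\varphi_X=3$ one gets $\rank\cM=1$, hence a single elliptic-curve summand and an intermediate cover of degree $2$ over $\tilde{A}_X$, which cannot divide $3$; for $\deg\varphi_X=4$ the configurations supported on a single elliptic curve force an intermediate cover of degree $3$, again contradicting divisibility, leaving exactly two rank-one summands from two distinct elliptic curves. You instead take the divisibility relation $\deg(\tilde{X}_{B_1}/B_1)\mid\deg\varphi_X$ as the starting point and then use Lemma \ref{complementary} together with the rigidity of fibrations on a product $C\times E$ (with $C$ of genus $\geq 2$ and $E$ elliptic) to kill the extremal factorizations; both routes rest on the decomposition theorem, and yours trades the explicit rank count for a second application of Lemma \ref{complementary} plus a geometric argument, which is slightly longer but equally valid. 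Two points need repair. First, the clause ``$Z$ can be any surface of maximal Albanese dimension with $0\leq\kappa(Z)\leq 1$'' is a realizability statement: the paper gets it from \cite[Example 5.2]{BPS2}, while your argument only proves the constraint $\kappa(Z)\leq 1$, not that every such $Z$ actually occurs. Second, in the degree-$3$ step the Stein image $\tilde{X}_{B_2}$ need not literally equal $K_1$; what your product structure actually gives is that any fibration of $\tilde{X}_{B_1}\times K_1$ onto a curve of genus $\geq 2$ must contract the $K_1$-fibers and hence factor through $\tilde{X}_{B_1}$, whereas the map to $B_2$ restricts to an isogeny on those fibers, so $\tilde{X}_{B_2}$ is forced to be an elliptic curve --- which is the contradiction with $p_{B_2}\in\cC_X$ you want (and the same correction applies where you reuse this step for degree $4$). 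With those two adjustments the proof is complete at essentially the same level of detail as the paper's, which is likewise brief about descending the $(\Z/2\Z)^2$-structure from $\tilde{X}\rightarrow\tilde{A}_X$ back to $X\rightarrow A_X$.
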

\begin{proof}
In \cite[Example 5.2]{BPS2}, the authors prove that every variety of maximal Albanese dimension $Z$ with $\chi(Z, \omega_Z)=0$ occurs as the eventual paracanonical image for some $X$ with $\deg \varphi_X=2$. Note that for surfaces $Z$ of maximal Albanese dimension, $\chi(Z, \omega_Z)=0$ is equivalent to say that $0\leq \kappa(Z)\leq 1$.

We see in Lemma \ref{image} that if $\deg \varphi_X=3$ or $4$, then $\kappa(Z)=0$ and hence we may assume in these cases that $Z=A_X$. We then apply the decomposition theorem and note that in this case $\cF^X=\cL$: 
\begin{eqnarray*}a_{X*}\omega_X=\cO_{A_X}\oplus \cL\oplus \cM,
\end{eqnarray*}
where $\cM$ is the direct sum of the direct summands which are not M-regular.

If $\deg\varphi_X=3$, then $\rank \cM=1$. Hence $\cM$ can be wrote as $p_B^*\cL_B\otimes Q_B$ for some line bundle $\cL_B$ on an elliptic curve $B$. Then after an \'etale cover $\tilde{A}_X\rightarrow A_X$, the base-change $\tilde{X}:=X\times_{A_X}\tilde{X}$ has a fibration $\tilde{X}\rightarrow Z_B\rightarrow B$ such that $\deg(Z_B/B)=2$. This is impossible. Hence $\deg \varphi_X=3$ can never happen.

If $\deg \varphi_X=4$, we argue that $\cM$ is the direct sum of  two direct summands $(p_B^*\cL_B\otimes Q_B)\oplus (p_C^*\cL_C\otimes Q_C)$. Otherwise,  $\cM=p_B^*\cF_B\otimes Q_B$ or $\cM=(p_B^*\cL_B\otimes Q_1)\oplus (p_B^*\cL_B'\otimes Q_2)$. In both cases, after an \'etale cover of $A_X$, the induced \'etale cover $\tilde{X}$ of $X$ admits a Stein factorization $\tilde{X}\rightarrow Z_B\rightarrow B$ such that $\deg(Z_B/B)=3$ and this is a contradiction. Hence there are two fibrations $p_B: A_X\rightarrow B$ and $p_C: A_X\rightarrow C$ and $\cM=(p_B^*\cL_B\otimes Q_B)\oplus (p_C^*\cL_C\otimes Q_C)$ and it is easy to see that $\varphi_X$ is   a birational $(\mathbb{Z}/2\mathbb{Z})^2$-cover and can be constructed exactly the same way as in \cite[Example 5.4]{BPS2}. In particular, an \'etale cover of $X$ is birational to a product of two bielliptic curves.
\end{proof}
\subsection{Variations of the eventual paracanonical maps of surfaces}
Let $a: S\rightarrow A$ be a primitive and generating morphism from a smooth projective surface $S$ to an abelian variety. We assume that $a$ is generically finite onto its image and consider the eventual map $\varphi_{K_S, a}$ of $K_S$ with respect to $a$. We have already see that $\varphi_{K_S, a}: S\rightarrow Z_{K_S, a}$ factors birationally through $\varphi_S$.

Note that \cite[Corollary 3.2 and Proposition 3.3]{BPS2} works also  for general $\varphi_{K_S, a}$. Hence $\deg\varphi_{K_S, a}\leq 4$ and $\deg\varphi_{K_S, a}\leq 2$ if $Z_{K_S, a}$ is of general type. Moreover, we also have the decomposition theorem for $a_*\omega_S$, hence the same arguments in Proposition \ref{non-gt} and Theorem \ref{surface} prove actually a variation of Theorem \ref{surface}.
 \begin{theo}\label{variation}
Let $a: S\rightarrow A$ be a generically finite morphism from a smooth surface of general type to an abelian variety. Assume that $a$ is primitive and generating, then the eventual paracanonical map $\varphi_{K_S, a}$ is birational, or is  a birational double cover or is a birational bidouble cover. If $\deg\varphi_{K_S, a}>1$, $Z_{K_S,a}$ can not be of general type. If $\deg\varphi_{K_S, a}=4$, $\varphi_{K_S, a}$ is a birational $(\mathbb{Z}/2\mathbb{Z})^2$-cover and an \'etale cover of $S$ is birational to the product of two bielliptic curves.
 \end{theo}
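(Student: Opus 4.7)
The plan is to mimic the proofs of Proposition~\ref{non-gt}, Lemma~\ref{image} and Theorem~\ref{surface} essentially verbatim, observing that every structural input used there --- the decomposition theorem, generic vanishing, Remark~\ref{va1}, and Proposition~\ref{technical} --- is already formulated for an arbitrary primitive, generating, generically finite morphism $a\colon S\to A$, not just for the Albanese $a_S$. First, the arguments of \cite[Corollary~3.2 and Proposition~3.3]{BPS2} depend only on the primitivity of $a$, so they give $\deg\varphi_{K_S,a}\le 4$ and the sharper $\deg\varphi_{K_S,a}\le 2$ whenever $Z_{K_S,a}$ is of general type. The BPS2 dichotomy also carries over: either $\chi(Z_{K_S,a},\omega_{Z_{K_S,a}})=0$ or $\chi(Z_{K_S,a},\omega_{Z_{K_S,a}})=\chi(S,\omega_S)>0$.

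Next I argue that if $\deg\varphi_{K_S,a}>1$ then $Z_{K_S,a}$ is not of general type, by adapting the proof of Proposition~\ref{non-gt}. Supposing for contradiction that $Z_{K_S,a}$ is of general type, the above forces $\deg\varphi_{K_S,a}=2$ and the dichotomy forces $\chi(S,\omega_S)=\chi(Z_{K_S,a},\omega_{Z_{K_S,a}})>0$. I then apply Proposition~\ref{technical} to the double cover $\varphi_{K_S,a}\colon S\to Z_{K_S,a}$: this identifies $Z_{K_S,a}$, after suitable \'etale base changes, birationally with a quotient $(C_1\times C_2)/\langle\tau_1\times\tau_2\rangle$ and shows in particular that the induced morphism $g\colon Z_{K_S,a}\to A$ has degree $2$. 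By \cite[Lemma~5.1]{BPS2}, the eventual map of $K_{Z_{K_S,a}}$ with respect to $g$ is then birationally equivalent to $g$ itself; but the characterization in Remark~\ref{va1} applied to $Z_{K_S,a}$ (using $h^0_g(\omega_{Z_{K_S,a}})=\chi(Z_{K_S,a},\omega_{Z_{K_S,a}})$) would force this eventual map to be birational, a contradiction.

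For the remaining cases $\deg\varphi_{K_S,a}\in\{3,4\}$, I repeat the argument of Lemma~\ref{image} to rule out $\kappa(Z_{K_S,a})=1$: if there were an elliptic fibration to a curve of genus $\ge 2$, then Koll\'ar's identity $\omega_{C'}=R^1 q_{C'*}\omega_{\tilde S}$ combined with the decomposition theorem applied to $f_*\omega_{\tilde S}$ forces $\deg\varphi_{K_S,a}\le 2$, a contradiction. Hence $\kappa(Z_{K_S,a})=0$, so $Z_{K_S,a}$ is birational to an abelian subvariety of $A$, which by the factorization $a\colon S\to Z_{K_S,a}\to A$ we may take to be $a(S)$. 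Now the analogue of Lemma~\ref{line} for the primitive morphism $a$ gives a line bundle $\cL$ on $Z_{K_S,a}$ with the M-regular part of $a_*\omega_S$ embedded in $g_*\cL$; applying the decomposition theorem to $a_*\omega_S$ exactly as in Theorem~\ref{surface}, the non-M-regular part has rank $\deg\varphi_{K_S,a}-1$ and must split as pullbacks from elliptic quotients of $A$. A single non-M-regular summand, or two summands pulled back from the same elliptic quotient, would yield a degree-$3$ cyclic cover over an elliptic base, which is impossible; this rules out $\deg\varphi_{K_S,a}=3$ entirely, and when $\deg\varphi_{K_S,a}=4$ forces two summands from distinct elliptic quotients, giving the birational $(\mathbb{Z}/2\mathbb{Z})^2$-cover structure and the product-of-bielliptic-curves description exactly as in \cite[Example~5.4]{BPS2}.

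The main obstacle is the subtle point in the second paragraph: Proposition~\ref{technical} is phrased for the Albanese morphism of the target surface, whereas here the map $Z_{K_S,a}\to A$ need not be the Albanese of $Z_{K_S,a}$. However $Z_{K_S,a}$ still has maximal Albanese dimension (it admits the factor $g$ generically finite to $A$), and the proof of Proposition~\ref{technical} uses only this property together with the decomposition theorem for $g_*\omega_S$; the application is then legitimate after passing to the Albanese of $Z_{K_S,a}$ and lifting $a$ through it. Beyond this technical point, all the remaining adaptations are purely cosmetic.
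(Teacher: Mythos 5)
Your proposal is correct and follows essentially the same route as the paper, whose own proof consists precisely of the observation that \cite[Corollary 3.2, Proposition 3.3]{BPS2}, the decomposition theorem, and the arguments of Proposition \ref{non-gt}, Lemma \ref{image} and Theorem \ref{surface} all apply verbatim to a general primitive, generating $a$; you correctly flag and resolve the one genuine subtlety (Proposition \ref{technical} is stated via the Albanese of the image surface, but primitivity and generation of $a$ force $A_{Z_{K_S,a}}\to A$ to be an isomorphism in the relevant cases, so $g$ is identified with $a_{Z_{K_S,a}}$). The only blemish is bookkeeping: the non-M-regular complement of $\cO_A\oplus\cL$ has rank $\deg\varphi_{K_S,a}-2$, not $\deg\varphi_{K_S,a}-1$, though your subsequent case analysis uses the correct count.
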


 \section{Higher dimensions}
 In the surface case, the bounds $\deg\varphi_X\leq 4$ and $\deg\varphi_X\leq 2$ when $\chi(X, \omega_X)=\chi(Z, \omega_Z)$ are deduced from Miyaoka-Yau inequality $9\chi(X, \omega_X)\geq \vol(X)$. These bounds are important for the complete classifications of $\varphi_X$ as we have seen in the proof of Proposition \ref{technical} and Theorem \ref{surface}.
 
  Unfortunately, we do not have such inequalities in higher dimensions. 
  In this section, we will apply the generic vanishing theory to get an effective bound of the eventual paracanonical maps of threefolds. 
  
  \begin{defi} \begin{itemize}
  \item[(1)] Let $X$ be a smooth projective variety. We define $$\tilde{q}(X)=\mathrm{Sup}\{q(\tilde{X})\mid \tilde{X}\xrightarrow{\text{finite abelian \'etale}} X\}.$$  
  \item[(2)]We say that a smooth variety $X$ of maximal Albanese dimension satisfies property $\C1$ if there exists $p_B\in \cC_X$ such that $\dim X_B=1$.
  \end{itemize}
  \end{defi}
\begin{rema}If $X$ is a smooth variety of maximal Albanese dimension  and $\tilde{q}(X)=\dim X$, then for any finite \'etale cover $\tilde{A}_X\rightarrow A_X$, the induced morphism $\tilde{X}\rightarrow \tilde{A}_X$ is the Albanese morphism of $\tilde{X}$.
  
By the decomposition theorem,  property $\C1$ for $X$ is equivalent to one of the following:
\begin{itemize}
\item there exists a direct summand $p_B^*\cF_{B,i}^X\otimes Q_{B,i}$  of $a_{X*}\omega_X$ such that $\dim \Supp\cF_{B,i}^X=1$;
\item For some abelian \'etale cover $\tilde{X}$ of $X$, the corresponding $\tilde{X}_B$ is a smooth curve of genus $\geq 2$.
\end{itemize}
  In particular,  if $X$ satisfies $\C1$, then $\tilde{q}(X)=\infty$.
  \end{rema}
  The proof of Theorem \ref{threefold} is lengthy and can be  roughly divided to the following cases:
  \begin{itemize}
  \item[(1)] The easiest case is that $\tilde{q}(Z)>3$. In this case, we essentially reduce the problem to surface or curve cases. Note that if $\kappa(Z)=1$, then $Z$ satisfies property $\C1$ and  hence $\tilde{q}(Z)>3$.
  \item[(2)] We then assume that $\tilde{q}(X)=3$ and $X$   satisfies property $\C1$. In this case, we will deduce a bound of $\varphi_X$ when $\kappa(Z)=2$ or $3$.
  \item[(3)] We then assume that  $X$ does not satisfy $\C1$ and $\tilde{q}(Z)=3$. Here, the case that $\chi(X, \omega_X)=\chi(Z, \omega_Z)>0$ is particularly interesting. Theorem \ref{=} will be the threefold version of Theorem \ref{technical} and Examples \ref{example1} and \ref{example2} show some new features of $\varphi_X$ in higher dimensions.  We will complete the discussions about the case that $\kappa(Z)=2$ or $3$.
  \item[(4)] Finally we  prove that $\varphi_X\leq 8$ and the equality holds only when $\kappa(Z)=0$ and $\varphi_X$ is a birational $(\mathbb{Z}/2\mathbb{Z})^3$-cover.
  \end{itemize}
  
In this section, we will always take $\pi: \tilde{A}_X\rightarrow A_X$   a sufficiently large \'etale cover so that we are in the situation of Remark \ref{strategy}.

\subsection{Case 1}
Assume that $\tilde{q}(Z)>3$.  In this case, as $\pi: \tilde{A}_X\rightarrow A_X$ is a sufficiently large \'etale cover, $a_{\tilde{Z}}(\tilde{Z})$ is a proper subvariety of $A_{\tilde{Z}}$. Note that $Z$ always satisfies this assumption if $\kappa(Z)=1$.

 We  denote by $V$ the image of $a_{\tilde{Z}}(\tilde{Z})$. Then $\kappa(V)>0$ and by the same argument in Corollary \ref{fiber}, $V$ is fibred by tori and hence $\kappa(V)=1$ or $2$.  
 
 Hence, we consider the commutative diagram:
\begin{eqnarray*}
\xymatrix{\tilde{X}\ar[drr]\ar[r]^{\tilde{\varphi}_X} & \tilde{Z}\ar[dr]\ar@{->>}[r]^t & V\ar[d]^q\ar@{^{(}->}[r] & A_{\tilde{Z}}\ar[d]\\
&& V_B\ar@{^{(}->}[r]  & B,}
\end{eqnarray*}
where $A_{\tilde{Z}}\rightarrow B$ is the quotient such that the induced morphism $V\rightarrow V_B$ is the Iitaka fibration of $V$ and $V_B$ is a subvariety of $B$ of general type.

Recall that $\pi_Z^*\cL\hookrightarrow \tilde{\varphi}_{X*}\omega_{\tilde{X}}$ and $h^0_{a_{\tilde{Z}}}(\pi_Z^*\cL)=\chi(\tilde{Z}, \tilde{\varphi}_{X*}\omega_{\tilde{X}})$.

\begin{lemm}\label{reduction}
In the above  setting,  let $s\in V_B$ be a general point and let $\tilde{X}_s$ and $\tilde{Z}_s$ be the connected fiber of $\tilde{X}$ and $\tilde{Z}$ over $s$, then for $Q\in \Pic^0(A_{\tilde{Z}})$ general, we have $h^0(\tilde{Z}_s, \cL\mid_{\tilde{Z}_s}\otimes Q)=h^0(\tilde{X}_s, \omega_{\tilde{X}_s}\otimes Q)$ and $\deg\varphi_X=\deg\tilde{\varphi}_{X}=\deg(\tilde{X}_s/\tilde{Z}_s)$.
\end{lemm}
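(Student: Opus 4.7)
The plan is to use base change on a general fiber of the Iitaka-fibration-induced morphisms $h: \tilde{X} \to V_B$ and $k: \tilde{Z} \to V_B$ to transfer the global equality $h^0_{\tilde{a}_X}(\omega_{\tilde{X}}) = h^0_{\tilde{g}}(\pi_Z^*\cL)$ into a fiberwise equality, exploiting crucially that $V_B$ is of general type.

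First, after suitable birational modifications of $\tilde{X}, \tilde{Z}, V_B$, I may assume both $h$ and $k$ are flat over a dense open $U \subseteq V_B$ with smooth connected fibers. For $s \in U$, the restriction $\tilde{\varphi}_s : \tilde{X}_s \to \tilde{Z}_s$ is generically finite of degree $\deg \tilde{\varphi}_X = \deg \varphi_X$, which is the second assertion. Using adjunction on a smooth fiber, $\omega_{\tilde{X}}|_{\tilde{X}_s} \cong \omega_{\tilde{X}_s}$ (the pullback of $\omega_{V_B}$ to a single fiber being trivial as a line bundle), so flat base change identifies $(\tilde{\varphi}_{X*}\omega_{\tilde{X}})|_{\tilde{Z}_s}$ with $\tilde{\varphi}_{s*}\omega_{\tilde{X}_s}$. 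Restricting the inclusion $\pi_Z^*\cL \hookrightarrow \tilde{\varphi}_{X*}\omega_{\tilde{X}}$ to $\tilde{Z}_s$ yields $\cL|_{\tilde{Z}_s} \hookrightarrow \tilde{\varphi}_{s*}\omega_{\tilde{X}_s}$, and twisting by any $Q \in \Pic^0(A_{\tilde{Z}})$ followed by taking $H^0$ gives one direction,
\[
h^0(\tilde{Z}_s, \cL|_{\tilde{Z}_s} \otimes Q) \;\leq\; h^0(\tilde{X}_s, \omega_{\tilde{X}_s} \otimes Q).
\]

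For the reverse inequality, the plan is to push forward to $V_B$ and compare generic ranks. Splitting $\Pic^0(A_{\tilde{Z}})$ up to isogeny as $\Pic^0(B) \times \Pic^0(K)$ with $K = \ker(A_{\tilde{Z}} \to B)$, a general $P$ factors as $P_B \otimes P_K$; by the projection formula, $h^0(\tilde{X}, \omega_{\tilde{X}} \otimes \tilde{a}_X^*P)$ equals $h^0(V_B, h_*(\omega_{\tilde{X}} \otimes \tilde{a}_X^*P_K) \otimes P_B|_{V_B})$, with the analogous formula for $\tilde{Z}$. Since $V_B$ is of general type and of maximal Albanese dimension, the pushforwards in question are GV sheaves on $V_B$ (Hacon/Koll\'ar), and the Chen--Jiang decomposition together with the M-regularity of $\omega_{V_B}$ let me compute their global $h^0$ for general $P_B$ in terms of generic ranks on $V_B$; by base change, those generic ranks equal precisely the fiberwise $h^0(\tilde{X}_s, \omega_{\tilde{X}_s} \otimes Q)$ and $h^0(\tilde{Z}_s, \cL|_{\tilde{Z}_s} \otimes Q)$ (with $Q = P_K$ restricted). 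Equating the two global $h^0$'s then forces equality on the general fiber.

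The main obstacle is this last step: the generic-vanishing argument only controls a sum of Euler characteristics on $V_B$, so to extract a pointwise equality on a general fiber I need to combine the fiberwise subsheaf inclusion from Step~2 (which signs the difference of generic ranks) with the general-type hypothesis on $V_B$ (which prevents any non-M-regular summands of the pushforwards from absorbing the discrepancy). Once the generic ranks of $h_*(\omega_{\tilde{X}} \otimes \tilde{a}_X^*P_K)$ and $k_*(\pi_Z^*\cL \otimes \tilde{g}^*P_K)$ on $V_B$ are shown to coincide, the earlier inclusion collapses to a fiberwise $h^0$-equality, completing the lemma.
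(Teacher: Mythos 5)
Your first half (restriction to general fibers, adjunction, and the resulting inequality $h^0(\tilde{Z}_s,\cL|_{\tilde{Z}_s}\otimes Q)\leq h^0(\tilde{X}_s,\omega_{\tilde{X}_s}\otimes Q)$, together with the degree statement) is fine and matches what the paper uses implicitly. The gap is in the reverse inequality. Your plan is to equate the two global $h^0$'s on $V_B$ and deduce equality of generic ranks, but for a GV sheaf $\cG$ supported on $V_B\subset B$ the quantity $h^0(\cG\otimes P_B)$ for general $P_B$ computes the Euler characteristic $\chi(B,\cG)$, which does not determine the generic rank: on an elliptic curve, $\cO_E\oplus L$ with $\deg L=2$ and a single line bundle of degree $2$ have the same general $h^0$ but generic ranks $2$ and $1$. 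So ``equating the two global $h^0$'s forces equality on the general fiber'' is not a valid inference, and your closing paragraph acknowledges the obstacle without actually removing it. A secondary problem is that $k_*(\pi_Z^*\cL\otimes\cdot)$ is not covered by Hacon/Koll\'ar: $\cL$ is only a sub-line-bundle of $\tilde{\varphi}_{X*}\omega_{\tilde{X}}$, not a canonical-type sheaf, so you cannot invoke generic vanishing or a Chen--Jiang decomposition for it.

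The missing idea, which is how the paper argues, is to run the decomposition theorem upstairs on $A_{\tilde{Z}}$ rather than on $B$. Write $t_*\tilde{\varphi}_{X*}\omega_{\tilde{X}}=\cF^{\tilde{X}}\oplus\cF$ with $\cF=\bigoplus_i p_i^*\cF_i$ the non-M-regular part. This sheaf is torsion-free with support exactly $V$, so $V$ is invariant under translation by each $\ker p_i$; since $V_B=V/K$ is of general type, Ueno's theorem forces $\ker p_i\subseteq K$. Consequently, for $Q\in\Pic^0(A_{\tilde{Z}})$ general, $Q$ restricts to a general (hence nontrivial on $\ker p_i$) element of $\Pic^0(K)$ along the fibers of $q:V\rightarrow V_B$, so $q_*(\cF\otimes Q)=0$. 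The sandwich $\cF^{\tilde{X}}\hookrightarrow t_*\pi_Z^*\cL\hookrightarrow t_*\tilde{\varphi}_{X*}\omega_{\tilde{X}}$ (the first inclusion coming from $h^0_{a_{\tilde{Z}}}(\pi_Z^*\cL)=\chi(\tilde{X},\omega_{\tilde{X}})$ as in Lemma \ref{line}) then collapses after applying $q_*(-\otimes Q)$, giving $q_*(t_*\pi_Z^*\cL\otimes Q)=q_*(t_*\tilde{\varphi}_{X*}\omega_{\tilde{X}}\otimes Q)$; restricting to a general $s\in V_B$ and using generic base change yields the fiberwise equality directly, with no comparison of Euler characteristics against generic ranks needed. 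Without this support analysis on $A_{\tilde{Z}}$, your argument does not close.
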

\begin{proof}
Apply the decomposition theorem in this case, we write $t_*\tilde{\varphi}_{X*}\omega_{\tilde{X}}=t_*(\omega_{\tilde{Z}}\oplus\tilde{\cQ})=\cF^{\tilde{X}}\oplus \cF$, where $\cF^{\tilde{X}}$ is the M-regular part and $\cF$ is the direct sum of the pull-back of M-regular sheaves on lower dimension abelian varieties.
Since $h^0_{a_{\tilde{Z}}}(\pi_Z^*\cL)=\chi(\tilde{Z}, \tilde{\varphi}_{X*}\omega_{\tilde{X}})$, we know that $t_*\pi_Z^*\cL\hookrightarrow t_*\tilde{\varphi}_{X*}\omega_{\tilde{X}}$ contains $\cF^{\tilde{X}}$. On the other hand, $\cF$ is supported on $V$ and since $V_B$ is of general type, each component of  the support of $\cF$ dominates $B$. Thus there exist $A_{\tilde{Z}}\xrightarrow{p_i} B_i\rightarrow B$ such that $\cF=\oplus_ip_i^*\cF_i$, where $p_i$ are quotient with positive dimensional fibers between abelian varieties and $\cF_i$ are M-regular sheaves on $B_i$.

 Thus   $q_*(\cF\otimes Q)=0$ for $Q\in \Pic^0(A_{\tilde{Z}})$ general. Hence
$q_*(t_*\tilde{\varphi}_{X*}\omega_{\tilde{X}}\otimes Q)=q_*(\cF^{\tilde{X}}\otimes Q)$. Moreover, $\cF^{\tilde{X}}\hookrightarrow t_*\pi_Z^*\cL$ and $\pi_Z^*\cL\hookrightarrow \tilde{\varphi}_{X*}\omega_{\tilde{X}}$, hence
  $q_*(t_*\tilde{\varphi}_{X*}\omega_{\tilde{X}}\otimes Q)=q_*(t_*\pi_Z^*\cL\otimes Q)$.
  Thus we   have $h^0(\tilde{Z}_s, \pi_Z^*\cL\mid_{\tilde{Z}_s}\otimes Q)= h^0(\tilde{X}_s, \omega_{\tilde{X}_s}\otimes Q)$ and $\deg\tilde{\varphi}_{X}=\deg(\tilde{X}_s/\tilde{Z}_s)$.
\end{proof}

\begin{prop}\label{case1}
In Case 1, $\deg \varphi_X\leq 4$.
\end{prop}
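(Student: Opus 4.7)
The plan is to reduce the threefold problem in Case 1 to the surface and curve cases already handled by Theorem \ref{variation} and the curve discussion following Lemma \ref{characterization}, by slicing $\tilde{X}$ along the Iitaka fibration of $V$.

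Concretely, I would first invoke Lemma \ref{reduction}: over a general point $s \in V_B$, the restriction $\pi_Z^*\cL|_{\tilde{Z}_s}$ is a rank one subsheaf of $\tilde{\varphi}_{X*}\omega_{\tilde{X}}|_{\tilde{Z}_s}$ satisfying
\[
h^0(\tilde{Z}_s, \cL|_{\tilde{Z}_s}\otimes Q) \;=\; h^0(\tilde{X}_s, \omega_{\tilde{X}_s}\otimes Q)
\]
for $Q\in\Pic^0(A_{\tilde{Z}})$ general, together with the identity $\deg\varphi_X = \deg(\tilde{X}_s/\tilde{Z}_s)$. Thus it suffices to bound $\deg(\tilde{X}_s/\tilde{Z}_s)$.

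The fiber dimensions are controlled by $\kappa(V)\in\{1,2\}$: since $\dim V = 3$ and $\dim V_B = \kappa(V)$, the fibers $\tilde{X}_s$ and $\tilde{Z}_s$ are either curves (when $\kappa(V)=2$) or surfaces (when $\kappa(V)=1$). Next I would verify that $\tilde{X}_s$ is of general type (inherited from $\tilde{X}$ being of general type, using Viehweg subadditivity with the general-type base $V_B$) and of maximal Albanese dimension (by restricting the Albanese map of $\tilde{X}$). Let $K\subset A_{\tilde{Z}}$ be the kernel of $p_B$; the composition $\tilde{X}_s\hookrightarrow \tilde{X}\to A_{\tilde{Z}}$ followed by projection to a translate of $K$ gives a generically finite morphism $a_s:\tilde{X}_s\to K_s$, which I would then make primitive and generating by replacing $K$ by the sub-abelian variety generated by the image and passing to an appropriate isogeny (this does not affect the degree $\deg(\tilde{X}_s/\tilde{Z}_s)$). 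The identity above, combined with the factorization $a_s:\tilde{X}_s\to \tilde{Z}_s\to K_s$, shows that $\tilde{X}_s\to\tilde{Z}_s$ satisfies conditions (1) and (2) of Remark \ref{va1} for $\varphi_{K_{\tilde{X}_s},a_s}$, so by the maximality property
\[
\deg(\tilde{X}_s/\tilde{Z}_s)\;\leq\;\deg\varphi_{K_{\tilde{X}_s},a_s}.
\]

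Finally I would appeal to the established lower-dimensional bounds: for a curve fiber, the discussion following Lemma \ref{characterization} yields $\deg\varphi_{K_{\tilde{X}_s},a_s}\leq 2$, and for a surface fiber Theorem \ref{variation} yields $\deg\varphi_{K_{\tilde{X}_s},a_s}\leq 4$. In either case $\deg\varphi_X\leq 4$. The principal obstacle is the bookkeeping in the middle step, namely ensuring that the auxiliary morphism $a_s$ can be arranged to be primitive and generating without altering the degree of interest; once this is in place, the bound is a direct consequence of Lemma \ref{reduction} and the surface/curve classifications.
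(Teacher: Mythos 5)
Your first step coincides with the paper's: Lemma \ref{reduction} reduces everything to the general fiber $\tilde{X}_s\rightarrow\tilde{Z}_s$ over $V_B$, with $\deg\varphi_X=\deg(\tilde{X}_s/\tilde{Z}_s)$ and the equality of twisted $h^0$'s for the restricted rank one sheaf. Where you diverge is in how the fiber degree is then bounded. The paper does not identify $\tilde{X}_s\rightarrow\tilde{Z}_s$ with a map dominated by an eventual paracanonical map of the fiber; it runs a volume computation directly: by the Severi inequality \cite[Theorem E]{BPS1}, $\vol(\pi_Z^*\cL\mid_{\tilde{Z}_s})\geq 2h^0_{a_{\tilde{Z}}}(\pi_Z^*\cL\mid_{\tilde{Z}_s})=2\chi(\tilde{X}_s,\omega_{\tilde{X}_s})$; the inclusion $(\tilde{\varphi}_X\mid_{\tilde{X}_s})^*(\pi_Z^*\cL\mid_{\tilde{Z}_s})\hookrightarrow\omega_{\tilde{X}_s}$ gives $\vol(\tilde{X}_s)\geq 2(\deg\tilde{\varphi}_X)\chi(\tilde{X}_s,\omega_{\tilde{X}_s})$; and Miyaoka--Yau yields $\deg\varphi_X\leq 4$ when $\dim V_B=1$ (surface fibers), the case $\dim V_B=2$ giving $\leq 2$ the same way. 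This requires nothing of the morphism $\tilde{X}_s\rightarrow K_s$ beyond generic finiteness.

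The step you flag as ``bookkeeping'' is in fact a genuine gap, and I do not see how to close it as stated. To invoke the maximality of Remark \ref{va1} you need the factorization $\tilde{X}_s\xrightarrow{\varphi_s}\tilde{Z}_s\rightarrow K'$, where $K'$ is the abelian variety over which $a_s$ has been made primitive and generating. Making $a_s$ generating is harmless (replace $K_s$ by the subtorus generated by the image; restrictions of general $Q\in\Pic^0(A_{\tilde{Z}})$ remain general there). But making it primitive requires passing to the isogeny $K'\rightarrow K''$ dual to the finite group $T=\ker\bigl(a_s^*:\Pic^0(K'')\rightarrow\Pic^0(\tilde{X}_s)\bigr)$, and the map $\tilde{Z}_s\rightarrow K''$ need not lift to $K'$: the lift fails exactly when some nonzero $P\in T$ has $g_s^*P$ a nontrivial torsion line bundle on $\tilde{Z}_s$ killed by $\varphi_s^*$, which is possible because $\varphi_s^*$ is not injective on torsion line bundles for a non-birational generically finite cover. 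In that situation condition (1) of Remark \ref{va1} holds only for the induced \'etale cover $\tilde{Z}_s'\rightarrow\tilde{Z}_s$, so maximality bounds $\deg(\tilde{X}_s/\tilde{Z}_s')$, which is only a divisor of the quantity $\deg(\tilde{X}_s/\tilde{Z}_s)$ you need to control; the inequality points the wrong way. You would have to rule out this degeneration (presumably using the $h^0$-equality of Lemma \ref{reduction}, which is not immediate), or simply replace this step by the paper's Severi/Miyaoka--Yau argument on the fiber, which is immune to the issue. A minor further point: the fiber is of general type by the easy addition inequality $\kappa(\tilde{X})\leq\kappa(\tilde{X}_s)+\dim V_B$, not by Viehweg subadditivity.
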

\begin{proof}
 Let $K$ be the fiber of $A_{\tilde{Z}}\rightarrow B$. Let $\tilde{X}_s\rightarrow \tilde{Z}_s\rightarrow K_s$ be the natural morphisms. 
 
 If $\dim V_B=1$,
by Lemma \ref{reduction}, $h^0(\tilde{X}_s, \omega_{\tilde{X}_s}\otimes Q)=h^0(\tilde{Z}_s, \pi_Z^*\cL\mid_{\tilde{Z}_s}\otimes Q)$ and $\deg\tilde{\varphi}_{X}=\deg(\tilde{X}_s/\tilde{Z}_s)$. Moreover, by the Severi inequality \cite[Theorem E]{BPS1}, $\vol(\pi_Z^*\cL\mid_{\tilde{Z}_s})\geq 2h^0_{a_{\tilde{Z}}}(\pi_Z^*\cL\mid_{\tilde{Z}_s})=2\chi(\tilde{X}_s, \omega_{\tilde{X}_s})$. On other other hand, $\pi_Z^*\cL\mid_{\tilde{Z}_s}\hookrightarrow (\tilde{\varphi}_{X}\mid_{\tilde{X}_s})_*(\omega_{\tilde{X}_s})$.  Hence $ (\tilde{\varphi}_{X}\mid_{\tilde{X}_s})^*(\pi_Z^*\cL\mid_{\tilde{Z}_s})\hookrightarrow \omega_{\tilde{X}_s}$ and thus $$\vol(\tilde{X}_s)\geq \deg(\tilde{X}_s/\tilde{Z}_s)\vol(\pi_Z^*\cL\mid_{\tilde{Z}_s})\geq 2(\deg\tilde{\varphi}_X)h_{a_X}^0(\omega_{\tilde{X}_s})=2(\deg\tilde{\varphi}_X)\chi(\tilde{X}_s, \omega_{\tilde{X}_s}).$$ By Miyaoka-Yao inequality, $9\chi(\tilde{X}_s, \omega_{\tilde{X}_s})\geq \vol(\tilde{X}_s)$, we conclude that $\deg \varphi_X=\deg\tilde{\varphi}_X\leq 4$.

If $\dim V_B=2$, by  the same argument, we have $\deg \varphi_X\leq 2$.
\end{proof}

In the following cases, we will always assume that  $\dim \tilde{q}(Z)=3$ and hence $a_Z$ and $a_X$ are surjective. In particular, $\kappa(Z)=0$, or $2$, or $3$.

\subsection{Case 2}  

In this subsection, we assume that $\tilde{q}(Z)=3$  and $X$ satisfies $\C1$. Let $p_B\in\cC_X$ such that $\dim X_B=1$. Note that $X_B\simeq B$. Otherwise $g(X_B)\geq 2$ and hence $a_X(X)$ dominates $X_B$ and hence so does $Z$, which contradicts the assumption that $\tilde{q}(Z)=3$. We apply the strategy in Remark \ref{strategy}. Let $\pi: \tilde{A}_X\rightarrow A_X$ be a sufficiently large isogeny. Denote by $G$ the Galois group of $\pi$. Then $G$ is an abelian group and is also the Galois group of $\pi_X$ and $\pi_Z$. Let $C:=\tilde{X}_B$.

 Then from the commutative diagram
\begin{eqnarray*}
\xymatrix{
\tilde{X}\ar[r]\ar[d] &X\ar[d]\\
C\ar[r]^{\epsilon} & B,}
\end{eqnarray*} we see that the induce morphism $C:=\tilde{X}_B\rightarrow B$ is also an abelian cover with Galois group $G_1$, which is naturally a quotient group of $G$.

Let $\hat{Z}$ be a smooth model of the main component of $C\times_B\tilde{Z}$. After birational modifications, we then have $$\tilde{\varphi}_X: \tilde{X}\xrightarrow{\varphi_1} \hat{Z}\xrightarrow{\varphi_2} \tilde{Z},$$ where $\varphi_2$ is  a birational $G_1$-cover.

\begin{lemm}\label{trivial} If $\hat{Z}$ is of general type, then $\deg\varphi_1\leq 2$. The equality holds only if for $c\in C$ general, $\varphi_1\mid_{\tilde{X}_c}: \tilde{X}_c\rightarrow \hat{Z}_c$ is  as in Proposition \ref{technical}, where $\tilde{X}_c$ and $\hat{Z}_c$ are respectively the fibers of $\tilde{X}$ and $Z$ over $c$.
\end{lemm}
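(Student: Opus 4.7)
The plan is to restrict to a general fiber of the natural fibration to $C$ and reduce the claim to Proposition~\ref{technical} in the surface case. Both $\tilde X$ and $\hat Z$ fiber over $C$ compatibly (the latter because $\hat Z$ is a smooth model of $C\times_B\tilde Z$), and $\varphi_1$ sends fibers to fibers; hence for general $c\in C$ one has $\deg\varphi_1=\deg(\varphi_1|_{\tilde X_c})$, with $\tilde X_c$ and $\hat Z_c$ smooth surfaces mapping generically finitely to the abelian surface $K:=\ker(\tilde A_X\to B)$.

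Next I would verify the hypotheses of Proposition~\ref{technical} for $\varphi_1|_{\tilde X_c}:\tilde X_c\to \hat Z_c$. Both surfaces have maximal Albanese dimension via their maps to $K$. Easy addition of Kodaira dimensions applied to the fibration $\hat Z\to C$ gives $\kappa(\hat Z_c)\geq \kappa(\hat Z)-\dim C=3-1=2$, so $\hat Z_c$ is of general type; the analogous argument, using that $X$ (hence $\tilde X$) is of general type, shows $\tilde X_c$ is of general type as well. As observed in the proof of Theorem~\ref{surface}, for a surface of maximal Albanese dimension one has $\chi(\omega)>0$ if and only if the Kodaira dimension equals $2$; therefore $\chi(\hat Z_c,\omega_{\hat Z_c})>0$ and $\chi(\tilde X_c,\omega_{\tilde X_c})>0$.

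The crucial step is the $\chi$-equality on the fiber. For this I would restrict the line bundle $\varphi_2^*\cL$ (with $\cL$ as in Lemma~\ref{line}) to $\hat Z_c$, obtaining a line bundle $\cL_c\hookrightarrow \varphi_1|_{\tilde X_c*}\omega_{\tilde X_c}$. Base change of $\varphi_{1*}\omega_{\tilde X}$ along $\hat Z\to C$, combined with generic vanishing and the surjectivity of $\Pic^0(\tilde A_X)\twoheadrightarrow \Pic^0(K)$ (so a general $Q\in\Pic^0(\tilde A_X)$ restricts to a general element of $\Pic^0(K)$), yields $h^0_{a_K}(\cL_c)=\chi(\tilde X_c,\omega_{\tilde X_c})$. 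By the characterization in Remark~\ref{va1}, the morphism $\varphi_1|_{\tilde X_c}$ is then a factor of the eventual map $\varphi_{K_{\tilde X_c},a}$ with respect to $a:\tilde X_c\to K$, so the dichotomy of Corollary~\ref{intermediate} (applied to this primitive generating setting) forces either $\chi(\tilde X_c)=\chi(\hat Z_c)$ or $\chi(\hat Z_c)=0$; the latter is ruled out by the previous paragraph.

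Finally I apply Proposition~\ref{technical} to $\varphi_1|_{\tilde X_c}$: if $\deg\varphi_1|_{\tilde X_c}>1$, then $\deg=2$ and after an abelian \'etale cover $\tilde X_c$ is birational to a product $C_1\times C_2$ of bielliptic curves while $\hat Z_c$ is birational to $(C_1\times C_2)/\langle\tau_1\times\tau_2\rangle$, exactly as in the statement. Hence $\deg\varphi_1\leq 2$, with equality forcing the claimed fiberwise structure. I expect the main obstacle to be making Step~3 fully rigorous: one must carefully control the restriction of $\cL$ through the cover $\varphi_2:\hat Z\to\tilde Z$ of Galois group $G_1$, keeping track of base change for the pushforward on $\hat Z\to C$ (as opposed to $\tilde Z\to B$) and of the compatibility of the decomposition theorem of $\cite{CJ}$ with this restriction, so that the equality $h^0_{a_K}(\cL_c)=\chi(\tilde X_c,\omega_{\tilde X_c})$ really holds and the characterization of Remark~\ref{va1} is truly applicable.
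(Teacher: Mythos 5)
Your overall strategy (restrict to a general fiber of the fibrations over $C$ and invoke Proposition \ref{technical}) is the same as the paper's, and your verification that $\hat{Z}_c$ and $\tilde{X}_c$ are of general type with positive $\chi$ is fine. The gap is exactly where you suspect it: the claimed equality $h^0_{a_K}(\cL_c)=\chi(\tilde{X}_c,\omega_{\tilde{X}_c})$ does not follow from base change, generic vanishing and the surjectivity of $\Pic^0(\tilde{A}_X)\twoheadrightarrow\Pic^0(K)$. What those ingredients give you is the upper bound $h^0(\hat{Z}_c,\cL_c\otimes P)\leq h^0(\tilde{X}_c,\omega_{\tilde{X}_c}\otimes P)$ together with the \emph{global} identity $h^0(C, h_{2*}(\varphi_2^*\pi_Z^*\cL\otimes Q))=\chi(\tilde{X},\omega_{\tilde{X}})$; but $h^0(\hat{Z}_c,\cL_c\otimes Q|_{\hat Z_c})$ is the \emph{generic rank} of $h_{2*}(\varphi_2^*\pi_Z^*\cL\otimes Q)$, and to pass from equal $h^0$ on $C$ to equal ranks you need a positivity statement for this sheaf on the genus $\geq 2$ curve $C$. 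Such positivity is available for $h_{2*}(\omega_{\hat{Z}/C}\otimes Q)$ by Fujita--Viehweg, but there is no reason for $h_{2*}(\varphi_2^*\pi_Z^*\cL\otimes Q)\otimes\omega_C^{-1}$ to be nef ($\cL$ is only a rank-one subsheaf of $\varphi_{X*}\omega_X$), so the lower bound $h^0_{a_K}(\cL_c)\geq\chi(\tilde{X}_c,\omega_{\tilde{X}_c})$ is not established and the appeal to Remark \ref{va1} on the fiber cannot get started.

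The paper sidesteps this entirely by working with canonical sheaves rather than with $\cL$. First it proves the \emph{global} equality $\chi(\tilde{X},\omega_{\tilde{X}})=\chi(\hat{Z},\omega_{\hat{Z}})>0$: since $\hat{Z}$ is an intermediate variety for $\tilde{\varphi}_X$, Corollary \ref{intermediate} forces either this equality or $\chi(\hat{Z},\omega_{\hat{Z}})=0$, and the latter is excluded because a general type threefold of maximal Albanese dimension with vanishing $\chi$ is (up to abelian \'etale cover) an Ein--Lazarsfeld threefold, which admits no fibration onto a curve of genus $\geq 2$, whereas $\hat{Z}$ fibers over $C$. Then, for $Q$ general, $h^0(C,h_{1*}(\omega_{\tilde{X}}\otimes Q))=h^0(C,h_{2*}(\omega_{\hat{Z}}\otimes Q))$, and Fujita's nefness of $h_{1*}(\omega_{\tilde{X}/C}\otimes Q)$ and $h_{2*}(\omega_{\hat{Z}/C}\otimes Q)$ combined with Riemann--Roch on $C$ converts equal $h^0$ into equal ranks, i.e.\ $\chi(\tilde{X}_c,\omega_{\tilde{X}_c})=\chi(\hat{Z}_c,\omega_{\hat{Z}_c})$, after which Proposition \ref{technical} applies. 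If you want to keep your fiberwise formulation, you should replace your Step 3 by this global-to-fiberwise argument; as written, the crucial step is not a proof.
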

\begin{proof}
Note that both  $\tilde{X}$ and $\hat{Z}$ have a fibration to $C$. 
Since $\hat{Z}$ is of general type, by Corollary \ref{intermediate} and the structure of Ein-Lazarsfeld threefold (see  Remark \ref{easyproof}), $\chi(\tilde{X}, \omega_{\tilde{X}})=\chi(\hat{Z}, \omega_{\hat{Z}})>0$.

Let $h_1: \tilde{X}\rightarrow C$ and $h_2: \hat{Z}\rightarrow C$ be the natural fibrations. Then for $Q\in \PA_X$ general, we have $$h^0(C, h_{1*}(\omega_{\tilde{X}}\otimes Q))=\chi(\tilde{X}, \omega_{\tilde{X}})=\chi(\hat{Z}, \omega_{\hat{Z}})=h^0(C, h_{2*}(\omega_{\hat{Z}}\otimes Q)).$$

By Fujita's theorem on  the nefness of $h_{1*}(\omega_{\tilde{X}/C}\otimes Q)$ and $h_{2*}(\omega_{\hat{X}/C}\otimes Q)$ and Riemann-Roch, we conclude that $\rank h_{1*}(\omega_{\tilde{X}/C}\otimes Q)=\rank h_{2*}(\omega_{\hat{X}/C}\otimes Q)$. Hence $\chi(\tilde{X}_c, \omega_{\tilde{X}_c})=\chi(\hat{Z}_c, \omega_{\hat{Z}_c})$ and hence by Proposition \ref{technical}, $\deg \varphi_1\leq 2$.
\end{proof}
\begin{lemm}\label{base}
Under the setting of Lemma \ref{trivial}, for any quotient $p_{B'}\in\cC_X$ such that $p_B\times p_{B'}$ is an isogeny, the  map of $\tilde{X}_{B'}$ induced by the linear system $|K_{\tilde{X}_{B'}}\otimes Q|$ for $Q\in\PB'$, factors through the induced morphism $ \tilde{X}_{B'}\rightarrow \tilde{Z}_{B'}$ and hence the latter is  either birational, or is a  birational $G$-cover, where $G=\mathbb{Z}/2\mathbb{Z}$ or $(\mathbb{Z}/2\mathbb{Z})^2$.
\end{lemm}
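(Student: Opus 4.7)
The strategy is to identify the generically finite induced morphism $f: \tilde{X}_{B'} \to \tilde{Z}_{B'}$ as an early factor of the eventual paracanonical map of $\tilde{X}_{B'}$ with respect to $\tilde{a}_{B'} : \tilde{X}_{B'} \to B'$, and then to invoke Theorem \ref{variation} to control its structure.

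First I would verify that $\tilde{X}_{B'}$ is a smooth projective surface of general type with $\chi(\omega_{\tilde{X}_{B'}}) > 0$, and that $\tilde{a}_{B'}$ is primitive and generating. The hypothesis $p_{B'} \in \cC_X$ combined with the decomposition (\ref{explanation}) provides an M-regular direct summand $\cF_{B',i}^X$ of $\tilde{a}_{B'*}\omega_{\tilde{X}_{B'}}$, so $\chi(\omega_{\tilde{X}_{B'}}) > 0$. Since $\tilde{X}_{B'}$ is of maximal Albanese dimension (as $\tilde{a}_{B'}$ is generically finite), this forces general type. Primitivity of $\tilde{a}_{B'}$ follows from the primitivity of $\tilde{X} \to B'$ via the Stein factorization.

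Next, I would check the two conditions of Remark \ref{va1} for $f$, with $L = K_{\tilde{X}_{B'}}$ and $a = \tilde{a}_{B'}$. Condition (1) is immediate from diagram (\ref{setup}). For condition (2), I would construct a line bundle $H$ on $\tilde{Z}_{B'}$ by pushing forward $\pi_Z^*\cL$ along the fibration $r : \tilde{Z} \to \tilde{Z}_{B'}$ and extracting an appropriate rank-one piece. The key cohomological identity $h^0(\tilde{Z}_{B'}, H \otimes P) = h^0(\tilde{X}_{B'}, \omega_{\tilde{X}_{B'}} \otimes P)$ for $P \in \PB'$ general would then be established by combining (a) the matching $h^0_{\tilde{g}}(\pi_Z^*\cL) = \chi(\tilde{X}, \omega_{\tilde{X}})$ from Lemma \ref{line}, (b) the decomposition (\ref{explanation}) applied at $B'$, which identifies the M-regular contribution of $\tilde{a}_{B'*}\omega_{\tilde{X}_{B'}}$, and (c) the inclusion $\cF^{\tilde{X}} \hookrightarrow \tilde{g}_*(\pi_Z^*\cL)$ together with base change along $p_{B'}$. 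This step --- constructing $H$ and verifying the cohomological equality --- is the main technical obstacle.

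Granting this, Remark \ref{va1} implies that the eventual map $\varphi_{K_{\tilde{X}_{B'}}, \tilde{a}_{B'}}$ factors through $f$. By Remark \ref{strategy} applied to $\tilde{X}_{B'}$, the map induced by $|K_{\tilde{X}_{B'}} \otimes Q|$ for $Q \in \PB'$ general is birationally equivalent to this eventual map, and therefore factors through $f$ as well. Finally, since $\tilde{X}_{B'}$ is a surface of general type with $\tilde{a}_{B'}$ primitive and generating, Theorem \ref{variation} forces $\varphi_{K_{\tilde{X}_{B'}}, \tilde{a}_{B'}}$ to be birational, a birational double cover, or a birational $(\mathbb{Z}/2\mathbb{Z})^2$-cover. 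Consequently, $f$, being an early factor, is itself either birational or a birational $G$-cover with $G = \mathbb{Z}/2\mathbb{Z}$ or $(\mathbb{Z}/2\mathbb{Z})^2$, completing the proof.
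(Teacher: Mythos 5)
Your high-level plan (show that the eventual map of $K_{\tilde{X}_{B'}}$ with respect to $\tilde{a}_{B'}$ factors through $f:\tilde{X}_{B'}\to\tilde{Z}_{B'}$, then quote Theorem \ref{variation}) ends the same way the paper does, but the one step that actually carries the content --- establishing the factorization --- is not proved. You reduce it to verifying condition (2) of Remark \ref{va1}, i.e.\ producing a line bundle $H$ on $\tilde{Z}_{B'}$ with $f^*H\hookrightarrow \omega_{\tilde{X}_{B'}}$ and $h^0(\tilde{Z}_{B'},H\otimes P)=h^0(\tilde{X}_{B'},\omega_{\tilde{X}_{B'}}\otimes P)$, and you explicitly label this ``the main technical obstacle'' without carrying it out. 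The sketch you give for it is not convincing: $\pi_Z^*\cL$ lives on $\tilde{Z}$ as a subsheaf of $\tilde{\varphi}_{X*}\omega_{\tilde{X}}$, and pushing it forward along the fibration $r:\tilde{Z}\to\tilde{Z}_{B'}$ does not naturally produce a subsheaf of $f_*\omega_{\tilde{X}_{B'}}$ --- the canonical sheaf of the base of a fibration is related to that of the total space through $R^1$ direct images (Koll\'ar) or through an adjunction-type inequality, not through $R^0$. A telling symptom is that your argument never uses the hypothesis that $p_B\times p_{B'}$ is an isogeny, nor the curve $C=\tilde{X}_B$ from the Case 2 setting, even though that hypothesis is exactly what the factorization hinges on.

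The paper's mechanism is different and concrete: since $p_B\times p_{B'}$ is an isogeny, $\tilde{X}\to C\times\tilde{X}_{B'}$ is generically finite, so $K_{\tilde{X}}$ dominates the pullback of $K_C\boxtimes K_{\tilde{X}_{B'}}$; choosing $D\in|K_C\otimes P|$ for $P\in\PB$ general exhibits $D+|K_{\tilde{X}_{B'}}\otimes Q|$ as a sub-linear system of $|K_{\tilde{X}}\otimes P\otimes Q|$. The map induced by the full system is $\tilde{\varphi}_X$ (for $P\otimes Q$ general), and it dominates the map induced by the sub-system, which is $\tilde{X}\to\tilde{X}_{B'}\dashrightarrow$ (image of $|K_{\tilde{X}_{B'}}\otimes Q|$); this forces the paracanonical-type map of $\tilde{X}_{B'}$ to factor through $\tilde{X}_{B'}\to\tilde{Z}_{B'}$. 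If you want to salvage your route, you would need to supply this geometric input (or an equivalent one) rather than assert that the cohomological identity ``would then be established.'' As written, the proof has a genuine gap at its central step.
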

\begin{proof}
Since $p_{B'}\in\cC_X$, $\tilde{X}_{B'}$ is of general type.
Under the assumption, $\tilde{X}\rightarrow C\times \tilde{X}_{B'}$ is a generically finite morphism. 
Let $D\in |K_C\otimes P|$ for some general $P\in\PB$. Then  $D+|K_{\tilde{X}_{B'}}\otimes Q|$ is a sub-linear system of $|K_{\tilde{X}}\otimes P\otimes Q|$. 

As $P\otimes Q\in\Pic^0(\tilde{A}_X)$ is general,  $\tilde{\varphi}_X$ is birationally equivalent to the map induced by $|K_{\tilde{X}}\otimes P\otimes Q|$. Hence the map induced by $|K_{\tilde{X}_{B'}}\otimes Q|$  factors birationally through the morphism $\tilde{X}_{B'}\rightarrow \tilde{Z}_{B'}$. Since $\pi$ is large enough and sufficiently divisible, we may assume that the map induced by $|K_{\tilde{X}_{B'}}\otimes Q|$ is the eventual paracanonical map $\varphi_{K_{\tilde{X}_{B'}},a'}$ for the morphism $a':\tilde{X}_{B'}\rightarrow B'$. We have seen in Theorem \ref{surface} and Theorem \ref{variation} that $\varphi_{a'}$ is either birational, or is a  birational $G$-cover with $G=\mathbb{Z}/2\mathbb{Z}$ or $(\mathbb{Z}/2\mathbb{Z})^2$.
\end{proof}

We will need a refined version of Lemma \ref{representative}.
\begin{lemm}\label{refi-repre}  Assume that we have a commutative diagram
\begin{eqnarray*}\xymatrix{
\tilde{X}\ar[r]^{\tilde{\varphi}_X}\ar[d]^{t_X} & \tilde{Z}\ar[d]^{t_Z}\ar[r]^{\tilde{g}}& \tilde{A}_X\ar[d]\\
X_1\ar[r]^{f_1} & Z_1\ar[r]^{g_1} & A_1}\end{eqnarray*} such that $g_1\circ f_1$ is a sub-representative of $\tilde{g}\circ\tilde{\varphi}_X$. Assume that $\chi(\tilde{X}, \omega_{\tilde{X}})=\chi(\tilde{Z}, \omega_{\tilde{Z}})$ and that $f_1$ is a birational $H$-cover. Then there exists at most one irreducible character $\chi$ of $H$ such that $\chi(Z_1, (f_{1*}\omega_{X_1})^{\chi})\neq 0$.

 \end{lemm}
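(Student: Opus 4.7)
The plan is to decompose $f_{1*}\omega_{X_1}$ into $H$-isotypic components and show that only the trivial character, corresponding to $\omega_{Z_1}$, can contribute a non-zero Euler characteristic. The first step is to exploit the hypothesis $\chi(\tilde{X}, \omega_{\tilde{X}}) = \chi(\tilde{Z}, \omega_{\tilde{Z}})$ to control the ``extra'' summand $\tilde{\cQ}$ in the canonical splitting $\tilde{\varphi}_{X*}\omega_{\tilde{X}} = \omega_{\tilde{Z}} \oplus \tilde{\cQ}$. Since $\tilde{\varphi}_X$ is generically finite, Koll\'ar vanishing gives $R^i \tilde{\varphi}_{X*}\omega_{\tilde{X}} = 0$ for $i > 0$, and generic vanishing applied to $\tilde{a}_{X*}\omega_{\tilde{X}}$ and $\tilde{g}_*\omega_{\tilde{Z}}$ yields $\chi(\tilde{X},\omega_{\tilde{X}}) = h^0_{\tilde{g}}(\tilde{\varphi}_{X*}\omega_{\tilde{X}}) = \chi(\tilde{Z}, \omega_{\tilde{Z}}) + h^0_{\tilde{g}}(\tilde{\cQ})$; the assumption forces $h^0_{\tilde{g}}(\tilde{\cQ}) = 0$.

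Next, write the canonical splitting $f_{1*}\omega_{X_1} = \omega_{Z_1} \oplus \cQ_1$ and decompose $\cQ_1 = \bigoplus_{\chi\ne 1}(f_{1*}\omega_{X_1})^\chi$ into the non-trivial $H$-isotypic components. Lemma \ref{representative} supplies a natural inclusion $\cQ_1 \hookrightarrow t_{Z*}\tilde{\cQ}$. Pushing down to $A_1$ and using the projection formula for the isogeny $t:\tilde{A}_X \to A_1$, one has $h^0(A_1, g_{1*}t_{Z*}\tilde{\cQ} \otimes P) = h^0(\tilde{A}_X, \tilde{g}_*\tilde{\cQ} \otimes t^*P)$ for every $P \in \Pic^0(A_1)$. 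The dual isogeny $t^*:\Pic^0(A_1)\to\Pic^0(\tilde{A}_X)$ is surjective, so $t^*P$ is general in $\Pic^0(\tilde{A}_X)$ whenever $P$ is general; combined with the previous paragraph this gives $h^0_{g_1}(t_{Z*}\tilde{\cQ}) = h^0_{\tilde{g}}(\tilde{\cQ}) = 0$. Therefore $h^0_{g_1}((f_{1*}\omega_{X_1})^\chi) \le h^0_{g_1}(\cQ_1) = 0$ for every non-trivial $\chi$.

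Finally, to pass from generic $h^0$ to Euler characteristic, note that Koll\'ar vanishing gives $R^i (g_1\circ f_1)_*\omega_{X_1} = 0$ for $i > 0$, and together with $R^i f_{1*}\omega_{X_1}=0$ the Leray spectral sequence forces $R^i g_{1*}(f_{1*}\omega_{X_1})^\chi = 0$ for $i > 0$ for each isotypic summand. Hence $\chi(Z_1, (f_{1*}\omega_{X_1})^\chi) = \chi(A_1, g_{1*}(f_{1*}\omega_{X_1})^\chi)$. Since $a_{X_1*}\omega_{X_1} = g_{1*}f_{1*}\omega_{X_1}$ is a GV sheaf by Hacon's theorem, each direct summand is itself GV on $A_1$, and for a GV sheaf on an abelian variety the Euler characteristic equals the generic $h^0$; combining these, $\chi(Z_1, (f_{1*}\omega_{X_1})^\chi) = h^0_{g_1}((f_{1*}\omega_{X_1})^\chi) = 0$ for every non-trivial $\chi$. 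Only the trivial character can yield a non-zero Euler characteristic, proving the lemma. The only delicate step is the base change of the generic $h^0$ along the isogeny $t$, handled via the projection formula and surjectivity of $t^*$ on $\Pic^0$; the rest is a routine application of Koll\'ar vanishing and the generic vanishing dictionary.
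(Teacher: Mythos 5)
Your proof is correct, and it actually establishes something stronger than the statement: under the hypothesis $\chi(\tilde{X},\omega_{\tilde{X}})=\chi(\tilde{Z},\omega_{\tilde{Z}})$, the only character that can carry a nonzero Euler characteristic is the \emph{trivial} one. Your route is genuinely different from the paper's. The paper first replaces $\tilde{X}$ by the main component of $X_1\times_{Z_1}\tilde{Z}$ so that $\tilde{\varphi}_X$ becomes a birational $H$-cover, then observes that the rank-one sheaf $\cL$ of Lemma \ref{line} embeds into a single isotypic component $(\tilde{\varphi}_{X*}\omega_{\tilde{X}})^{\chi}$, so the M-regular part of $\tilde{a}_{X*}\omega_{\tilde{X}}$ is concentrated in that one component, and finally pushes all the other components down to $Z_1$ via the argument of Lemma \ref{representative}; it does not say which component survives. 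You instead use the hypothesis directly to get $h^0_{\tilde{g}}(\tilde{\cQ})=0$ for the whole non-invariant summand, transfer this to $\cQ_1$ through Lemma \ref{representative} together with the projection formula along the isogeny $t$ (correctly noting that $t^*$ is surjective on $\Pic^0$, so general twists pull back to general twists), and then convert generic $h^0$ into Euler characteristic via Koll\'ar vanishing and the GV property of direct summands of $(g_1\circ f_1)_*\omega_{X_1}$. All of these steps are sound, and under the stated hypothesis the two proofs agree in substance, since $\chi(\tilde{X},\omega_{\tilde{X}})=\chi(\tilde{Z},\omega_{\tilde{Z}})$ forces $\cF^{\tilde{X}}=\cF^{\tilde{Z}}\hookrightarrow\tilde{g}_*\omega_{\tilde{Z}}$, i.e.\ the paper's distinguished character is also the trivial one. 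One caveat worth recording: your sharper conclusion makes visible a tension with how the lemma is later invoked (for instance in Propositions \ref{k=2} and \ref{reduction2}), where a \emph{non-trivial} character with positive Euler characteristic is exhibited and the lemma is used to deduce that the Galois group is $\mathbb{Z}/2\mathbb{Z}$; in those applications one has $\chi(\tilde{Z},\omega_{\tilde{Z}})=0<\chi(\tilde{X},\omega_{\tilde{X}})$, so the hypothesis of the lemma as literally stated does not hold there. That is a bookkeeping issue on the paper's side, not a gap in your argument, which does prove the lemma as stated.
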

\begin{proof}
By Corollary \ref{intermediate}, we may replace $\tilde{X}$ by a smooth model of the main component of $X_1\times_{Z_1}\tilde{Z}$ and assume that $\tilde{\varphi}_X$ is also a birational $H$-cover.  We may assume that $\pi_Z^*\cL\hookrightarrow (\tilde{\varphi}_{X*}\omega_{\tilde{X}})^{\chi}$. Then the M-regular part of $\tilde{a}_{X*}\omega_{\tilde{X}}$ is contained in $(\tilde{a}_{X*}\omega_{\tilde{X}})^{\chi}$. Because 
$$\tilde{a}_{X*}\omega_{\tilde{X}}=\bigoplus_{\tau\in \mathrm{Char}(H)}(\tilde{a}_{X*}\omega_{\tilde{X}})^{\tau},$$ for any other character $\rho\neq \chi$, $\chi(\tilde{Z}, (\tilde{a}_{X*}\omega_{\tilde{X}})^{\rho})=0$. By the same argument in the proof of Lemma \ref{representative}, $(f_{1*}\omega_{X_1})^{\rho}\hookrightarrow t_{Z*}(\tilde{\varphi}_{X*}\omega_{\tilde{X}})^{\rho}$. Hence for all $\rho\neq \chi$, $\chi(Z_1, (f_{1*}\omega_{X_1})^{\rho})=0$.
\end{proof}

\begin{prop}\label{k=2}
In case 2, if  moreover $\kappa(Z)=2$, then $\deg\varphi_X\leq 4$.
\end{prop}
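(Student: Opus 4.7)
The approach combines the Iitaka structure of $\tilde Z$ forced by $\kappa(Z)=2$ with the $\C1$-fibration through $p_B$, applying Lemma \ref{base} in the $2$-dimensional direction and bounding the contribution from the $1$-dimensional direction by rank-counting.

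First, I would extract the Iitaka fibration of $\tilde Z$. Since $\tilde q(Z)=3$ makes $\tilde g\colon\tilde Z\to\tilde A_X$ generically finite surjective, and $\kappa(\tilde Z)=2$, the structure theorem for Iitaka fibrations of varieties of maximal Albanese dimension produces a $2$-dimensional quotient $p_{B_0}\colon\tilde A_X\to B_0$ such that the Iitaka fibration of $\tilde Z$ is $\tilde Z\to \tilde Z_{B_0}\hookrightarrow B_0$ with $\tilde Z_{B_0}$ a surface of general type. The Stein factor $\tilde X_{B_0}$ generically-finitely dominates $\tilde Z_{B_0}$, so $\tilde X_{B_0}$ is of general type and of maximal Albanese dimension, giving $\chi(\tilde X_{B_0},\omega_{\tilde X_{B_0}})>0$ and $p_{B_0}\in\cC_X$. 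The product $p_B\times p_{B_0}$ is an isogeny onto its image, since $\dim\ker p_{B_0}=1<2=\dim\ker p_B$ and $p_{B_0}$ cannot factor through $p_B$ (which would force $\dim B_0\le\dim B=1$).

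Second, I apply Lemma \ref{base} with $p_{B'}=p_{B_0}$: the eventual paracanonical map of the surface $\tilde X_{B_0}$ with respect to $\tilde X_{B_0}\to B_0$ factors through $\tilde X_{B_0}\to\tilde Z_{B_0}$. Since $\tilde Z_{B_0}$ is of general type, Theorem \ref{variation} forces this eventual paracanonical to be birational (otherwise its image would be of general type, contradicting Theorem \ref{variation} when the degree is $>1$), so $\tilde X_{B_0}\to\tilde Z_{B_0}$ is birational. Using the factorization $\tilde\varphi_X\colon\tilde X\xrightarrow{\varphi_1}\hat Z\xrightarrow{\varphi_2}\tilde Z$ where $\hat Z$ is a smooth model of the main component of $C\times_B\tilde Z$, the threefold $\hat Z$ dominates $C\times \tilde Z_{B_0}$, which is of general type (product of a curve of genus $\geq 2$ with a surface of general type), so $\hat Z$ itself is of general type. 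Lemma \ref{trivial} then gives $\deg\varphi_1\le 2$, hence $\deg\varphi_X = |G_1|\cdot\deg\varphi_1 \le 2|G_1|$, where $|G_1|=\deg(C/B)$.

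The main obstacle will be bounding $|G_1|\le 2$. My plan is a rank-counting argument in the decomposition of $\tilde a_{X*}\omega_{\tilde X}$: by (\ref{explanation}) applied to the abelian cover $C\to B$, the $p_B$-direction contributes a direct summand of rank $|G_1|-1$, while Lemma \ref{line} constrains $\cF^{\tilde X}\hookrightarrow\tilde g_*(\pi_Z^*\cL)$, bounding the total M-regular rank by $\deg\tilde g$. Combining these with the rank contributions from $p_{B_0}$ (a 2-dimensional quotient) and the total equality $\rank\tilde a_{X*}\omega_{\tilde X}=\deg\tilde\varphi_X\cdot\deg\tilde g$ should force $|G_1|\le 2$. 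In the extremal case $\deg\varphi_1=2$, Lemma \ref{trivial} together with Proposition \ref{technical} imposes that each fiber $\tilde X_c$ is birational to a product of bielliptic curves, which should separately pin down $|G_1|=2$. Either case yields $\deg\varphi_X\le 4$.
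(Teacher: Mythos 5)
Your reduction to a single ``transverse'' case rests on a faulty dimension count, and the case you discard is precisely the hard half of the proof. For $p_B\times p_{B_0}\colon \tilde A_X\to B\times B_0$ to be an isogeny onto its image you need $\ker p_B\cap\ker p_{B_0}$ to be finite, i.e.\ the one-dimensional $\ker p_{B_0}$ must not be contained in the two-dimensional $\ker p_B$; but the containment $\ker p_{B_0}\subset\ker p_B$ is exactly the statement that $p_B$ factors through $p_{B_0}$ (that $B_0$ dominates $B$), and nothing about dimensions forbids it. Your parenthetical only excludes the reverse factorization $\ker p_B\subset\ker p_{B_0}$, which is indeed impossible but irrelevant. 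When $B_0$ dominates $B$ --- the paper's ``second case,'' where $\tilde Z\simeq S\times E$ and $p_{B_0}$ is the projection to $A_S$ --- the map $\hat Z\to C\times\tilde Z_{B_0}$ contracts the elliptic fibers of the Iitaka fibration: $\hat Z$ is birational to $S_1\times E$ with $S_1$ a model of $C\times_B S$, so $\hat Z$ is \emph{not} of general type, Lemma \ref{trivial} does not apply, and your whole chain collapses. The paper devotes most of the proof to this situation: it produces via Lemma \ref{complementary} a second quotient $p_{B'}\in\cC_X$ with $\tilde A_X\to B\times B'$ an isogeny, shows $S'=\tilde X_{B'}\to B'$ is a birational double cover (excluding the bidouble case using Theorem \ref{variation} and the product-of-bielliptic-curves structure), checks that $C\times S'\to B\times B'$ is a sub-representative of $\tilde g\circ\tilde\varphi_X$, applies Lemma \ref{refi-repre} to get $G_1=\mathbb{Z}/2\mathbb{Z}$, and concludes with a fiberwise comparison over $C$ showing $\tilde X$ is birational to the main component of $(C\times S')\times_{B\times B'}\tilde Z$.

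Even in the transverse case your argument is incomplete where it matters: the bound $|G_1|\le 2$ is only announced as a plan (``rank-counting \ldots should force $|G_1|\le 2$''), not carried out, and it is not clear that a naive rank count in the decomposition of $\tilde a_{X*}\omega_{\tilde X}$ suffices. The paper's mechanism is Lemma \ref{refi-repre}: the morphisms $C\times\tilde Z_{B_1}\xrightarrow{f_1}B\times\tilde Z_{B_1}\to B\times B_1$ form a sub-representative of $\tilde g\circ\varphi_2$ with $f_1$ a birational $G_1$-cover, and since every nontrivial eigensheaf of $C\to B$ is ample and $\chi(\tilde Z_{B_1},\omega_{\tilde Z_{B_1}})>0$, every nontrivial character of $G_1$ has nonvanishing Euler characteristic; Lemma \ref{refi-repre} permits at most one such character, whence $G_1=\mathbb{Z}/2\mathbb{Z}$. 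You should replace your sketch with this argument. (A smaller point: your deduction that $\tilde X_{B_0}\to\tilde Z_{B_0}$ is birational is also not airtight, since Theorem \ref{variation} only controls the eventual image of $\tilde X_{B_0}$, which is a priori merely dominated by $\tilde Z_{B_0}$ rather than equal to it; fortunately that step is not needed for the rest of your outline.)
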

\begin{proof}
Since $\kappa(Z)=2$, we may assume that  $Z\rightarrow Z_{B_1}$ is the Iitaka fibration of $Z$ for some $p_{B_1}\in \cC_{Z}$ (see for instance \cite[Theorem 13]{Kaw}). Note that, since $\tilde{q}(Z)=3$, $\dim B_1=2$. Moreover, $\tilde{Z}_{B_1}$ is of general type.

There are two cases: either   $p_{B_1}\times p_B: A_X\rightarrow B_1\times B$ is an isogeny or $p_{B_1}$ dominates $p_B$ for any $p_B\in\cC_X$ with $\dim B=1$.

In the first case, the natural morphism $\hat{Z}\rightarrow C\times \tilde{Z}_{B_1}$ is generically finite and surjective and hence $\hat{Z}$ is of general type and we may apply Lemma \ref{trivial}. It suffices to show that $\deg \varphi_2=|G|=2$.   The natural morphisms $C\times \tilde{Z}_{B_1}\xrightarrow{f_1} B\times\tilde{Z}_{B_1}\rightarrow B\times B_1$ is a sub-representative of $\tilde{g}\circ \varphi_2$. Moreover, $f_1$ is a birational $G$-cover and as $C\rightarrow B$ is so and $\tilde{Z}_{B_1}$ is of general type, for any $\chi\in G^*$ non-trivial, $\chi(B\times\tilde{Z}_{B_1}, (f_{1*}\omega_{ C\times \tilde{Z}_{B_1}})^{\chi})\neq 0$. Thus, by Lemma \ref{refi-repre}, $G=\mathbb{Z}/2\mathbb{Z}$.

The second case is more difficult. We may assume that $\tilde{Z}\simeq S\times E$ and $\tilde{A}_X=A_{\tilde{Z}}\simeq A_S\times E$, where $E$ is an elliptic curve and $S$ is a surface of general type. Then $p_{B_1}$ is the natural projection $A_S\times E\rightarrow A_S$.

 We apply Lemma \ref{complementary} to $\tilde{\varphi}_X$ and conclude that there exists $p_{B'}\in\cC_X$ such that $\tilde{A}_X\rightarrow B\times B'$ is an isogeny. Let $S':=\tilde{X}_{B'}$ be the surface of general type. Since $\tilde{Z}\simeq S\times E$ and $\tilde{Z}$ does not have  fibrations to  smooth projective curves of genus $\geq 2$, $\tilde{Z}_{B'}=B'$.
By Lemma \ref{base}, the induced morphism $t: S'\rightarrow B'$ is the eventual paracanonical map of $S'\rightarrow B'$. We claim that $t$ is a birational double cover. Otherwise, by Lemma \ref{base}, this morphism is a birational bidouble cover. But in this case,  we see from Theorem \ref{variation} that $S'$ is birational to a product of curves. This is a contradiction to the assumption that $p_{B_1}$ dominates $p_B$ for any $p_B\in\cC_X$ with  $\dim B=1$.
 
 Note that $C\times S'\rightarrow B\times B'$ is primitive.  We consider  the  natural morphisms
\begin{eqnarray}\label{ms}C\times S'\xrightarrow{f_1} B\times B'\xrightarrow{\mathrm{Identity}}  B\times B'.\end{eqnarray}

 We claim that (\ref{ms}) is a sub-representative of $\tilde{g}\circ\tilde{\varphi}_X$. It suffices to prove that $(C\times S')\times_{B\times B'}\tilde{Z}$ has an irreducible main component.  Note that $\hat{Z}$, which is a smooth model of $C\times_B\tilde{Z}$, is birational to $S_1\times E$, where $S_1$ is a smooth model of $C\times_BS$. In particular, $\hat{Z}_{B'}$ is not of general type. Moreover, $S'=\tilde{X}_{B'}\rightarrow B'$ is a birational double cover. Thus $\hat{Z}_{B'}=B'$ and hence $\hat{Z}\rightarrow  B'$ is a fibration and $S'\times_{B'}\hat{Z}$ has an irreducible main component and we finish the proof of the claim.
 
  By Lemma \ref{refi-repre}, we then   conclude that $G_1=\mathbb{Z}/2\mathbb{Z}$.

Let $X'$ be a smooth model of the main component of $ (C\times S')\times_{B\times B'}\tilde{Z}$. It just suffices to show that the natural map  $\tilde{X}\rightarrow X'$ is birational to finish the proof of the second case.
 We have a commutative diagram
\begin{eqnarray*}
\xymatrix{
\tilde{X}\ar[dr]_{h_1}\ar[r] & X'\ar[d]^{h_2}\ar[r]& S\ar[dl]^{h_3}\\
& C.}\end{eqnarray*}
Moreover, $X'$ is of general type, so $\chi(X', \omega_{X'})>0$ and hence by Corollary \ref{intermediate}, $\chi(\tilde{X}, \omega_{\tilde{X}})=\chi(X', \omega_{X'})$. 
We apply the argument as in Lemma \ref{trivial}. For $c\in C$ general, $\chi(\tilde{X}_c, \omega_{\tilde{X}_c})=\chi(X_c', \omega_{X_c'})$. Then $\tilde{X}_c\rightarrow X_c'$ is either birational or is as in Proposition \ref{technical}. In particular, $q(X_c')=2$. However, $X_c'$ admits a surjective morphism to the fiber $S_c$ of $h_3$, which is a curve of genus $\geq 2$, which is a contradiction.
\end{proof}

\begin{prop}\label{reduction2}
In case 2, if $\kappa(Z)=3$, then $\deg\varphi_X\leq 4$.
\end{prop}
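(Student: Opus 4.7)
Following the Case~2 setup of Remark~\ref{strategy}, we have the factorization $\tilde\varphi_X:\tilde X\xrightarrow{\varphi_1}\hat Z\xrightarrow{\varphi_2}\tilde Z$, where $\hat Z$ is a smooth model of $C\times_B\tilde Z$ and $\varphi_2$ is a birational $G_1$-cover with $G_1=\mathrm{Gal}(C/B)$; here $C=\tilde X_B$ has genus $\ge 2$. Since $\kappa(Z)=3$ forces $\tilde Z$, and hence $\hat Z$, to be of general type, Lemma~\ref{trivial} yields $\deg\varphi_1\le 2$ at once, and the proposition reduces to showing $|G_1|\le 2$, since then $\deg\varphi_X=\deg\varphi_1\cdot|G_1|\le 4$.

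Property~(A) of \cite{BPS2} leaves two possibilities: either $\chi(\tilde Z,\omega_{\tilde Z})=\chi(\tilde X,\omega_{\tilde X})>0$ (Case~A) or $\chi(\tilde Z,\omega_{\tilde Z})=0$ (Case~B). In Case~A I would use Corollary~\ref{complementary2} to produce a complementary $p_{B'}\in\cC_{\cQ}$ so that $p_B\times p_{B'}:\tilde A_X\to B\times B'$ is an isogeny, and set $S':=\tilde X_{B'}$, a surface of general type. By Lemma~\ref{base} the induced map $S'\to\tilde Z_{B'}$ is a birational $H$-cover with $H\in\{1,\mathbb{Z}/2\mathbb{Z},(\mathbb{Z}/2\mathbb{Z})^2\}$. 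After verifying that the main component of $(C\times S')\times_{B\times B'}\tilde Z$ is irreducible and dominated by $\tilde X$, the morphism $C\times S'\to B\times B'$ becomes a sub-representative of $\tilde g\circ\tilde\varphi_X$ which is a birational $(G_1\times H)$-cover. Lemma~\ref{refi-repre} then permits at most one character $(\chi,\rho)\in (G_1\times H)^*$ whose eigensheaf has nonzero Euler characteristic. The Künneth eigendecomposition $(f_{1*}\omega_{C\times S'})^{(\chi,\rho)}\cong L_\chi\boxtimes(g_{B'*}\omega_{S'})^\rho$ combined with Riemann-Roch on the elliptic curve $B$ shows that this Euler characteristic is nonzero exactly when $\chi$ is nontrivial (so that $\deg L_\chi>0$) and $\rho$ is such that $\chi(B',(g_{B'*}\omega_{S'})^\rho)\ne 0$; the number of such pairs is at least $|G_1|-1$, so the count $\le 1$ forces $|G_1|\le 2$.

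In Case~B the main theorem of \cite{CDJ} implies that, after a further abelian \'etale cover, $\tilde Z$ is birational to an Ein-Lazarsfeld threefold $(C_1\times C_2\times C_3)/\langle\tau_1\tau_2\tau_3\rangle$ with $C_i\to E_i$ bielliptic, and in particular $\tilde A_X=E_1\times E_2\times E_3$. The elliptic quotient $B$ must then be one of the three projections $\tilde A_X\to E_i$, and $\tilde Z\to E_i$ has connected fiber $C_j\times C_k$. Comparing the Stein factorization $\tilde X\to C\to E_i$ with this explicit structure forces the cover $C\to E_i$ to factor through a bielliptic cover, again giving $|G_1|=2$.

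\textbf{Main obstacle.} The delicate technical point is the sub-representative verification in Case~A: since $\tilde Z$ carries no a priori product structure when $\kappa(Z)=3$, showing that $(C\times S')\times_{B\times B'}\tilde Z$ has an irreducible main component requires ruling out extra fibrations of $\tilde Z$ over $B\times B'$ that would split the fiber product. The subcases $H=\{1\}$ and $H=(\mathbb{Z}/2\mathbb{Z})^2$ also require separate bookkeeping, with the latter invoking Theorem~\ref{surface} to put $S'$ birationally into the form of a product of two bielliptic curves while preserving the eigensheaf count.
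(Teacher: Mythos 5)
Your overall skeleton matches the paper's: reduce via Lemma~\ref{trivial} to showing $|G_1|\le 2$, split according to whether $\chi(\tilde Z,\omega_{\tilde Z})>0$ or $=0$, and in each case produce a product-type (sub-)representative over $B\times B'$ and count characters whose eigensheaves have nonzero Euler characteristic. But both of your cases have genuine gaps. In Case~A you work with $\tilde\varphi_X:\tilde X\to\tilde Z$ and $S'=\tilde X_{B'}$, and the whole argument hinges on $C\times S'\to B\times\tilde Z_{B'}$ being a sub-representative of $\tilde g\circ\tilde\varphi_X$, i.e.\ on the irreducibility of the main component of $(C\times S')\times_{B\times\tilde Z_{B'}}\tilde Z$. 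You flag this yourself as the ``main obstacle'' and do not resolve it; it is exactly the point where the $\kappa(Z)=2$ argument (Proposition~\ref{k=2}) leans on the product structure $\tilde Z\simeq S\times E$, which is unavailable here. The paper avoids the issue entirely by working one level down: it takes the complementary fibration for $\varphi_2:\hat Z\to\tilde Z$ (not for $\tilde\varphi_X$), compares the surfaces $\hat Z_{B_1}\to\tilde Z_{B_1}$, and uses the \emph{diagonal} quotient $C\times\hat Z_{B_1}\to(C\times\hat Z_{B_1})/G_1\to B\times B_1$ as a representative of $\tilde g\circ\varphi_2$ — a representative property that is essentially built in because $\hat Z$ is by construction the main component of $C\times_B\tilde Z$ — and then derives the contradiction $\chi(C\times\hat Z_{B_1},\omega)-\chi((C\times\hat Z_{B_1})/G_1,\omega)>0$ against $\chi(\hat Z,\omega_{\hat Z})=\chi(\tilde Z,\omega_{\tilde Z})$ via Lemma~\ref{representative}.

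Case~B is the more serious problem: your argument there does not work. You try to deduce $|G_1|=2$ purely from the Ein--Lazarsfeld structure of $\tilde Z$ (``$B$ must be one of the three projections $\tilde A_X\to E_i$'' and ``comparing Stein factorizations forces $C\to E_i$ to factor through a bielliptic cover''). Neither claim is justified: an elliptic quotient of $E_1\times E_2\times E_3$ need not be a coordinate projection, and, more fundamentally, the degree of $C=\tilde X_B$ over $B$ is a property of $\tilde X$, not of $\tilde Z$, so no amount of information about $\tilde Z$ alone can bound it — the constraint must come from the eventual-paracanonical-map property (the rank-one sheaf $\cL$ of Lemma~\ref{line}), which your Case~B never invokes. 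Even granting a bielliptic factorization, ``factors through a bielliptic cover'' does not give $\deg(C/B)=2$. The paper instead uses the EL structure only to find $p_{B'}\in\cC_Z$ with $p_B\times p_{B'}$ an isogeny and $\tilde Z_{B'}$ of general type, takes the representative $C\times\tilde Z_{B'}\to B\times\tilde Z_{B'}\to B\times B'$, and observes that every nontrivial $\chi\in G_1^*$ yields an eigensheaf $\cL_\chi\boxtimes\omega_{\tilde Z_{B'}}$ with positive Euler characteristic, so Lemma~\ref{refi-repre} forces $G_1$ to have a single nontrivial character.
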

\begin{proof} 

We again have two cases: either $\chi(Z, \omega_Z)>0$ or $\chi(Z, \omega_Z)=0$.

In both cases, we can apply Lemma \ref{trivial} and just need to show that the Galois group $G_1$ of $\varphi_2$ is $\mathbb{Z}/2\mathbb{Z}$. 

 In the first case, we consider $\hat{Z}\xrightarrow{\varphi_2}\tilde{Z}\xrightarrow{\tilde{g}} \tilde{A}_X$. Write $\varphi_{2*}\omega_{\hat{Z}}=\omega_{\tilde{Z}}\oplus \cQ_2$.  By Lemma \ref{complementary} and Corollary \ref{complementary2}, there exists $p_{B_1}\in\cC_{\cQ_2}$ such that $p_B\times p_{B_1}: \tilde{A}_X\rightarrow B\times B_1$ is an isogeny.
 Since $p_{B_1}\in\cC_{\cQ}$, the induced morphism $\hat{Z}_{B_1}\rightarrow \tilde{Z}_{B_1}$ has degree $>1$ and $\chi(\hat{Z}_{B_1}, \omega_{\hat{Z}_{B_1}})>\chi(\tilde{Z}_{B_1}, \omega_{\tilde{Z}_{B_1}})$.  Moreover, as $\varphi_2$ is a birational $G_1$-cover, $\hat{Z}_{B_1}\rightarrow \tilde{Z}_{B_1}$ is also a birational abelian cover whose Galois group $H$, which is a quotient group of $G_1$.

We claim that $G_1=\mathbb{Z}/2\mathbb{Z}$. Since $\chi(\hat{Z}_{B_1}, \omega_{\hat{Z}_{B_1}})>\chi(\tilde{Z}_{B_1}, \omega_{\tilde{Z}_{B_1}})$, for the morphism $\psi:\hat{Z}_{B_1}\rightarrow \tilde{Z}_{B_1}$, there exists a character $\chi\in H^*$ such that the eigensheaf $\cL_{\chi}$ of the sheave $\psi_{*}\omega_{\hat{Z}_{B_1}}$ has $\chi(\tilde{Z}_{B_1}, \cL_{\chi})>0$. Assume that $G_1\neq \mathbb{Z}/2\mathbb{Z}$,   there exists a non-trivial character $\chi'$ of $G_1$ such that $\chi\chi'$ is   a non-trivial character of $G_1$. We then consider the diagonal quotient $C\times \hat{Z}_{B_1}\rightarrow (C\times \hat{Z}_{B_1})/G_1$. Let $\cL_{\chi'}$ be the corresponding character sheaf of $\epsilon_*\omega_C$ on $B$. Then $\chi(C\times\hat{Z}_{B_1}, \omega_{C\times\hat{Z}_{B_1}})-\chi((C\times\hat{Z}_{B_1})/G_1, \omega_{(C\times\hat{Z}_{B_1})/G_1})\geq \chi(B\times \tilde{Z}_{B_1}, \cL_{\chi'}\boxtimes \cL_{\chi})>0$. We note that the morphisms $C\times \hat{Z}_{B_1}\rightarrow (C\times \hat{Z}_{B_1})/G_1\rightarrow B\times B_1$ is a representative of $\tilde{g}\circ \varphi_2$. But $\chi(\hat{Z}, \omega_{\hat{Z}})=\chi(\tilde{Z}, \omega_{\tilde{Z}})$, which is a contradiction to Lemma \ref{representative}.

In the second case, we may assume that $\tilde{Z}$ is an Ein-Lazarsfeld threefold.
By the structure of an Ein-Lazarsfeld threefold, there exists $p_{B'}\in\cC_Z$ such that $p_B\times p_{B'}:\tilde{A}_X\rightarrow B\times B'$ is an isogeny and $\tilde{Z}_{B'}$ is a surface of general type.  The natural morphisms $X_1:=C\times \tilde{Z}_{B'}\xrightarrow{f_1} Z_1:=B\times \tilde{Z}_{B'} \rightarrow B\times B'$ is then a representative of $\tilde{g}\circ\varphi_2$. For any non-trivial character $\chi\in G_1^*$, let $\cL_{\chi}$ be the eigensheaf of $f_{1*}\omega_{X_1}$. Then it is easy to see that $\chi(Z_1, \cL_{\chi})>0$. Then by Lemma \ref{refi-repre} $G_1$ should have only one non-trivial character. Hence $G_1=\mathbb{Z}/2\mathbb{Z}$.
\end{proof}

\subsection{Case 3}

We now assume that $X$ does not satisfy $\C1$ and $\tilde{q}(Z)=3$. In order to simplify notations, we shall replace $\tilde{X}$ (resp. $\tilde{Z}$) by $X$ and $Z$.  Hence no torsion line bundles appear in the decomposition formula of $a_{X*}\omega_X$.

We will first deal with the case $\chi(X, \omega_X)=\chi(Z, \omega_Z)>0$. Then by Lemma \ref{complementary}, $| \cC_{\cQ}|\geq 2$.
Assume that  $p_{B_1}, p_{B_2}\in \cC_{\cQ}\subset\cC_X$. Let  $\PE$ be the neutral  component of $\PB_1\cap\PB_2$.  Let $Y$ be a smooth model of the main component of $X_{B_1}\times_EX_{B_2}$.

\begin{lemm}\label{primitive}
The natural morphism $\gamma: Y\rightarrow B_1\times_EB_2$ is primitive.
\end{lemm}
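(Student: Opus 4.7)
The plan is to prove primitivity of $\gamma$ by factoring the problem through $A_X$, using the universal primitivity of the Albanese map $a_X$. The key idea is to construct a canonical morphism $\phi\colon A_X\to B_1\times_E B_2$ and a compatible morphism $X\to Y$, so that $\gamma\circ(X\to Y)=\phi\circ a_X$; then injectivity of $\gamma^*$ will follow from injectivity of $\phi^*$.

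First I would verify the two ingredients of the factorization. The two inclusions $\PE\hookrightarrow\PB_i\hookrightarrow\PA_X$ ($i=1,2$) coincide, both equal to the tautological $\PE\subset\PA_X$, so dually the two compositions $A_X\to B_i\to E$ agree, and $(p_{B_1},p_{B_2})\colon A_X\to B_1\times B_2$ factors through a canonical $\phi\colon A_X\to B_1\times_E B_2$. Setting $K_i=\ker p_{B_i}$, the standard identity $\dim(K_1+K_2)=\dim K_1+\dim K_2-\dim(K_1\cap K_2)^{0}$ combined with $E=A_X/(K_1+K_2)$ shows $\dim(B_1\times_E B_2)=\dim A_X-\dim(K_1\cap K_2)^{0}$, which equals $\dim\phi(A_X)$; hence $\phi$ is surjective. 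Separately, the two Stein factorizations $r_i\colon X\to X_{B_i}$ combine into $X\to X_{B_1}\times_E X_{B_2}$, whose image is connected and dominates both $X_{B_i}$; it therefore lies in the main component, giving a morphism $X\to Y$ with $\gamma\circ(X\to Y)=\phi\circ a_X$.

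Given $P\in\widehat{B_1\times_E B_2}$ with $\gamma^*P=0$ on $Y$, pullback through $X\to Y$ yields $a_X^*\phi^*P=0$ on $X$, and primitivity of $a_X$ forces $\phi^*P=0$ in $\PA_X$. To conclude $P=0$ we need $\phi^*$ injective. A short computation using the dual of the exact sequence $0\to B_1\times_E B_2\to B_1\times B_2\xrightarrow{p_1-p_2}E\to 0$ identifies $\ker\phi^*$ with the Cartier dual of the component group $\pi_0(K_1\cap K_2)$, equivalently the finite torsion group $(\PB_1\cap\PB_2)/\PE$. Under the setup inherited from Corollary \ref{complementary2}, $\phi$ is an isogeny onto $B_1\times_E B_2$, so $K_1\cap K_2$ is already finite; what is needed is the stronger statement that it is actually connected, i.e.\ trivial.

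The main obstacle is precisely this connectedness of $K_1\cap K_2$. I would handle it by invoking the flexibility of Remark \ref{strategy}: the \'etale cover $\pi\colon\tilde A_X\to A_X$ may be enlarged so that $\widehat{\tilde E}$ absorbs the finite torsion of $(\PB_1\cap\PB_2)/\PE$, making the intersection connected after relabeling. Once this is arranged, $\phi^*$ is injective, and hence so is $\gamma^*$, proving that $\gamma$ is primitive. The factorization and dimension count steps are routine; the substantive input is the torsion absorption in the final \'etale-cover step.
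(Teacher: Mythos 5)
Your reduction is set up correctly as far as it goes: the factorization $\gamma\circ(X\to Y)=\phi\circ a_X$ together with the primitivity of $a_X$ shows that $\ker\gamma^*$ is contained in $\ker\phi^*$, the Cartier dual of $\pi_0(K_1\cap K_2)$, i.e.\ the finite group $(\PB_1\cap\PB_2)/\PE$. But the step that disposes of this finite group is exactly where your argument fails. First, the covers of Remark \ref{strategy} are of the form $m_d:A_X\rightarrow A_X$, and for such a cover the connected component of $m_d^{-1}(K_i)$ through the origin is $K_i$ itself, so $\pi_0(K_1\cap K_2)$ --- hence $\ker\phi^*$ --- is completely unchanged: no torsion is ``absorbed''. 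Second, even allowing an arbitrary isogeny, connectedness of $\tilde{K}_1\cap\tilde{K}_2$ is not preserved under refining covers (pulling back $E\times 0$ and $0\times E$ along $(x,y)\mapsto(x+y,x-y)$ turns a trivial intersection into $E[2]$), so ``take $\pi$ large enough'' is not a legitimate move, and one would have to handle all pairs in $\cC_{\cQ}$ simultaneously. Third, even if you arranged primitivity of the new $\tilde{\gamma}$ after a further cover, it would not descend to the original $\gamma$: a nonzero $P$ with $\gamma^*P=\cO_Y$ is only shown to lie in the (again finite, possibly nonzero) kernel of $\widehat{B_1\times_EB_2}\rightarrow\widehat{\tilde{B}_1\times_{\tilde{E}}\tilde{B}_2}$, whereas the lemma is used in Step 1 of Theorem \ref{=} precisely for the given $Y$ and $B_1\times_EB_2$, to obtain connectedness of $Y\times_{B_1\times_EB_2}A_X$.

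The missing idea is the geometric input that the paper's proof supplies and that cannot be recovered by passing through $\PA_X$. The paper writes $Q=Q_1\otimes Q_2\mid_{B_1\times_EB_2}$ with $\gamma^*Q=\cO_Y$, restricts to a general fiber $Y_e$ of $Y\rightarrow E$ (birational to $F_1\times F_2$) to deduce that each $t_{B_i}^*Q_i\mid_{F_i}$ is trivial, and then proves the Claim that this already forces $Q_i\in q_i^*\PE$: one studies the nef line bundle $R^1h_*(\omega_{X_{B_i}}\otimes t_{B_i}^*Q_i)$ on $E$ and uses Koll\'ar's splitting plus the structure theorem of \cite{CJ} --- ampleness would produce a one-dimensional member of $\cC_X$, contradicting the Case 3 hypothesis that $X$ does not satisfy $\C1$, while numerical triviality gives $Q_i\in\PE$ by Serre duality. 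This is precisely the mechanism that kills the classes in $\ker\phi^*\setminus\{0\}$. Without an argument of this kind, your proof only establishes that $\ker\gamma^*$ is finite, not that it is trivial.
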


\begin{proof}
In case 3, for any $p_B\in \cC_X$, $\dim B=2$.
Moreover, for any $p_B\in \cC$, let $q: B\rightarrow E$ be a fibration to an elliptic curve. We consider the commutative diagram:
\begin{eqnarray*}
\xymatrix{
X_B\ar[r]^{t_B}\ar[dr]_h & B\ar[d]^q\\
& E .}
\end{eqnarray*}
Let $F$ be a general fiber of $h$.  We claim that if $t_B^*Q\mid_F$ is trivial for some $Q\in\PB$, then $Q\in\PE$. Otherwise, $L=R^1h_*(\omega_{X_B}\otimes Q)$ is a non-trivial nef line bundle on $E$. If $L$ is ample, then $V^0(L)=\PE$. We then conclude by Koll\'ar's splitting that $Q+\PE$ is a component of $V^2(\omega_X)$. Then by the main theorem of \cite{CJ}, we conclude that $p_E: A_X\rightarrow E$ belongs to $\cC_X$, which is a contradiction to the assumption. If $L$ is numerically trivial, then $h^2(X_B, \omega_{X_B}\otimes Q\otimes L^{-1})\geq h^1(E,R^1h_*(\omega_{X_B}\otimes Q)\otimes L^{-1} )=1$ and hence $Q=L\in\PE$.

Let $Q\in \Pic^0(B_1\times_EB_2)$ such that $\gamma^*Q=\cO_{Y}$. 
We consider the exact sequence of abelian varieties $$0\rightarrow \PE\rightarrow \PB_1\times \PB_2\rightarrow \Pic^0(B_1\times_EB_2)\rightarrow 0,$$ and let $Q_i\in\PB_i$ such that $Q_1\otimes Q_2\mid_{B_1\times_EB_2}\simeq Q$.  

Note that a general fiber $Y_e$ of $Y\rightarrow E$ is birational to $F_1\times F_2$, where $F_i$ is a general fiber of $h_i: X_{B_i}\rightarrow E$. Thus $\gamma^*Q\mid_{F_e}$ is trivial implies that  $t_{B_i}^*Q_i\mid_{F_i}$ is trivial for $i=1,2$. From the above claim, we conclude that $Q_i\in\PE$, for $i=1,2$. Hence $Q$ is trivial.  
\end{proof}

The following theorem is the analogue of Theorem \ref{technical} in dimension $3$ and is probably the most difficult result in this article.  The setting is slightly different.
\begin{theo}\label{=}
Let $X$ be a smooth threefold of maximal Albanese dimension with $\tilde{q}(X)=3$ and $\chi(X, \omega_X)>0$. Assume that the Albanese morphism of $X$ factors as  
 $$a_X: X\xrightarrow{\varphi} Z\xrightarrow{g} A_X ,$$  where $\varphi$ is a morphism of degree $\geq 2$ between smooth projective varieties and  $\chi(X, \omega_X)=\chi(Z, \omega_Z)>0$. Then we have the following possibilities:
 \begin{itemize}
 \item[(A)] $a_X$ is  a  birational $(\mathbb{Z}/3\mathbb{Z})^2$-cover, $\varphi$ is a birational $\mathbb{Z}/3\mathbb{Z}$-cover, and $\varphi_X$ is birationally equivalent to $\varphi$;
 \item[(B)]  $a_X$ is a birational $(\mathbb{Z}/2\mathbb{Z})^2$-cover, $\varphi$ is a birational double cover, and $\varphi_X$ is birationally equivalent to $a_X$;
  \item[(C)] $a_X$ is a birational $(\mathbb{Z}/2\mathbb{Z})^3$-cover and there are several sub-cases:
  \begin{itemize}
   \item[(C1)] $\varphi$ is a birational double cover, $\rank\cF^X=3$, and $\varphi_X$ is a birationally equivalent to $\varphi$;
   \item[(C2)] $\varphi$ is a birational double cover, $\rank\cF^X=2$, and $\varphi_X$, which factors through $\varphi$, is a birational $(\mathbb{Z}/2\mathbb{Z})^2$-cover;
   \item[(C3)] $\varphi$ is a birational double cover, $\rank\cF^X=1$, and $\varphi_X$ is a birationally equivalent to $a_X$;
   \item[(C4)] $\varphi$ is a birational $(\mathbb{Z}/2\mathbb{Z})^2$-cover, $\rank\cF^X=1$, and $\varphi_X$ is a birationally equivalent to $a_X$;
\end{itemize}
 \end{itemize} 
\end{theo}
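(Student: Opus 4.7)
The plan is to combine three ingredients: the decomposition theorem applied to $a_{X*}\omega_X$, fibrewise surface analysis via Proposition \ref{technical} and Theorem \ref{variation}, and Galois/character bookkeeping controlled by Lemma \ref{refi-repre}. First, Corollary \ref{complementary2} (whose hypotheses hold because $\deg\varphi\geq 2$ and $\chi(X,\omega_X)=\chi(Z,\omega_Z)$) supplies two complementary $p_{B_1},p_{B_2}\in\cC_{\cQ}$ with $p_{B_1}\times p_{B_2}:A_X\to B_1\times B_2$ an isogeny; since we are in Case~3 and $X$ fails $\C1$, each $B_i$ is $2$-dimensional and the surfaces $X_{B_i}$ are of general type, fitting into $X_{B_i}\to Z_{B_i}\to B_i$.

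Second, I would reduce the fibrewise picture to surfaces. A Fujita-nefness plus Riemann--Roch comparison of the pushforward $h_{1*}(\omega_{X_{B_i}}\otimes Q)$ with the analogous pushforward on $Z_{B_i}$, parallel to the claim inside the proof of Proposition \ref{technical}, yields $\chi(X_{B_i},\omega_{X_{B_i}})=\chi(Z_{B_i},\omega_{Z_{B_i}})$. When $Z_{B_i}$ is itself of general type, Proposition \ref{technical} forces $X_{B_i}\to Z_{B_i}$ to be birational or a bielliptic double cover of product type; more generally, Theorem \ref{variation} applied to the primitive generating morphism $X_{B_i}\to B_i$ restricts the associated eventual paracanonical map of $X_{B_i}$ to birational, birational $\mathbb{Z}/2\mathbb{Z}$-cover, or birational $(\mathbb{Z}/2\mathbb{Z})^2$-cover.

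Third, I would globalise through the variety $Y$ introduced before Lemma \ref{primitive}, namely a smooth model of the main component of $X_{B_1}\times_EX_{B_2}$. Because $Y\to B_1\times_EB_2$ is primitive, the natural map $X\to Y\to B_1\times_EB_2$ is a sub-representative of $a_X$, and Lemma \ref{refi-repre} together with $\chi(X,\omega_X)=\chi(Z,\omega_Z)$ forces at most one nontrivial character of the Galois group $G$ of (the Galois hull of) $a_X$ to support nonzero Euler characteristic. Combining this with the fibrewise possibilities from step~2 pins $G$ down to one of $(\mathbb{Z}/3\mathbb{Z})^2$, $(\mathbb{Z}/2\mathbb{Z})^2$, or $(\mathbb{Z}/2\mathbb{Z})^3$. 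The subdivision of~(C) into (C1)--(C4) then corresponds to the three possible values of $\rank\cF^X\in\{1,2,3\}$ tracked through Lemma \ref{line} and the inclusion $\cF^X\hookrightarrow g_*\cL$; concretely, $\varphi_X$ is recovered as the quotient of $a_X$ by the stabiliser of the distinguished character that carries $\cL$.

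The main obstacle will be reconciling the fibrewise and global pictures in case~(A). Proposition \ref{technical} and Theorem \ref{variation} produce only $\mathbb{Z}/2\mathbb{Z}$-type structures at the fibre level, so the cyclic group of order three must enter through a scenario where $X_{B_i}\to Z_{B_i}$ is birational while $X_{B_i}\to B_i$ carries a $\mathbb{Z}/3\mathbb{Z}$-descent invisible to the single-fibre surface comparison. Proving that this forces $G=(\mathbb{Z}/3\mathbb{Z})^2$, and ruling out $\mathbb{Z}/3\mathbb{Z}\times\mathbb{Z}/2\mathbb{Z}$ hybrids via the one-character constraint of Lemma \ref{refi-repre}, will require a careful character-by-character bookkeeping inside the decomposition~(\ref{decomposition}) of $a_{X*}\omega_X$, in the same spirit as the exclusion of a third fibration in the proof of Proposition \ref{technical}.
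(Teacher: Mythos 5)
Your first concrete step already fails: for $p_{B_i}\in\cC_{\cQ}$ the formula (\ref{explanation}) gives $\tilde{a}_{B_i*}\omega_{\tilde{X}_{B_i}}=\tilde{g}_{B_i*}\omega_{\tilde{Z}_{B_i}}\oplus\cF^{\cQ}_{B_i}$ with $\cF^{\cQ}_{B_i}$ a nontrivial M-regular sheaf, so $\chi(X_{B_i},\omega_{X_{B_i}})>\chi(Z_{B_i},\omega_{Z_{B_i}})$; the strict inequality is precisely the definition of membership in $\cC_{\cQ}$. Hence your claimed equality $\chi(X_{B_i},\omega_{X_{B_i}})=\chi(Z_{B_i},\omega_{Z_{B_i}})$ is false, Proposition \ref{technical} cannot be applied to $X_{B_i}\to Z_{B_i}$, and the Fujita--Riemann--Roch comparison you invoke is in any case an argument about fibrations over a common curve, which is not the situation here. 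A second misstep: Lemma \ref{refi-repre} constrains the characters of the Galois group $H$ of $f_1$, that is, of a representative of $\varphi: X\to Z$, not of the Galois hull of $a_X$; it cannot by itself pin down a group for $a_X$, and the fact that $a_X$ is birationally Galois at all is a nontrivial \emph{output} of the proof, not an input you may assume.

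The deeper gap is the one you flag yourself but do not close: where $(\mathbb{Z}/3\mathbb{Z})^2$ comes from. In the paper this is not a fibrewise surface phenomenon. One first builds, from any two $p_{B_1},p_{B_2}\in\cC_{\cQ}$, a representative $\hat{X}\to\hat{Z}\to A_X$ of $g\circ\varphi$ via Galois closures and fiber products (Step 2); then, when $|\cC_{\hat{\cQ}}|\geq 3$, one shows the three fibrations $\hat{X}\to\hat{X}_{B_i}$ are birationally isotrivial, so that by \cite[Lemma 3.5]{DJLS} $\hat{X}$ is birational to $(C_1\times C_2\times C_3)/\cG_1$ for Galois covers $C_i\to E_i$ with groups $G_i$; a degree count then yields $\tfrac{1}{g_1}+\tfrac{1}{g_2}+\tfrac{1}{g_3}\geq 1$ together with divisibility constraints ($g_i$ divides $|\cG_1|$, $|\cG_1|$ divides $g_jg_k$, $|\cG_1|<g_jg_k$), whose only surviving solution is $g_1=g_2=g_3=3$. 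None of this mechanism appears in your outline. Likewise, the subdivision of case (C) is not a tautological enumeration of $\rank\cF^X\in\{1,2,3\}$: it requires the rank identity $m=\sum_{p_B\in\cC_{\cQ}}(1+\rank\cF_B^Z)$ of Step 1, Lemma \ref{cQ} to bound $|\cC_{\cQ}|$, Lemma \ref{doubles} to produce the auxiliary double covers $S_B\to B$, and Pardini's building data $D_{\tau}$ for $(\mathbb{Z}/2\mathbb{Z})^k$-covers to exclude $|\cC_{\cQ}|\geq 5$ and to separate the subcases. As written, the proposal names several of the right tools but rests on a false fibrewise reduction and leaves the decisive steps unproved.
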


\begin{proof}   Let $\varphi_*\omega_X=\omega_Z\oplus\cQ$ be the canonical splitting.
 
 For any varieties $Z'$ such that $\mathbb{C}(Z')$ is a intermediate field extension between $\mathbb{C}(X)/\mathbb{C}(Z)$, we can take birational modifications of $X$ and $Z'$ such that $\varphi$ factors  as $X\xrightarrow{\varphi'} Z'\rightarrow Z$. Then we also have $\chi(X, \omega_X)=\chi(Z', \omega_{Z'})$.  If we assume that $\varphi'$ is minimal and prove   Theorem \ref{=} for $\varphi'$, then we are in Cases $\mathrm{(A)}$, $\mathrm{(B)}$, $\mathrm{(C1)}$, $\mathrm{(C2)}$ and $\mathrm{(C3)}$ and, from the structure of $a_X$, the only possible case that $Z'$ is not birational to $Z$ is   $\mathrm{(C3)}$.  In this situation, $\varphi$ should be the Case $\mathrm{(C4)}$. 
 
From the above discussion, we just need to deal the case that $\varphi$ is {\em minimal} and we shall always assume that $\varphi$ is minimal.
 In particular, for any $p_B\in\cC_{\cQ}$, $X$ is birational to the main component of $X_B\times_{Z_B}Z$.
 \\

 {\bf Step 1} {\em 
 We claim that $|\cC_{\cQ}|\geq 3$ or $\deg \varphi=\deg g=2$.}\\

Let $k=\deg \varphi$ and $m=\deg g$.  By the decomposition theorem,
$$a_{X*}\omega_X=g_*\omega_Z  \bigoplus_{p_B\in\cC_{\cQ}}p_B^*\cF_{B}^{\cQ}.$$

We compare the rank of the sheaves on both side and get $km=m+\sum_{p_B\in\cC_{\cQ}}\rank \cF_B^{\cQ}$. Note that for any $p_B\in\cC_{\cQ}$, $\deg(X_B/Z_B)=k$ by the minimality of $\varphi$. 

As  $\dim B=2$, for each $p_B\in\cC_X$, there exit no inclusion relations among these $\PB$. Hence, by (\ref{explanation}), if $p_B\in\cC_{\cQ}$, $a_{B*}\omega_{X_B}=g_{B*}\omega_{Z_B}\oplus \cF^{\cQ}_B$ and hence
 \begin{eqnarray*}&&1+\rank \cF_B^Z+\rank \cF_B^{\cQ}\\&=&\deg(X_B/B)\\&=& k\deg(Z_B/B)=k\times(1+\rank \cF_B^Z).
\end{eqnarray*}
We then conclude that $\rank \cF_B^{\cQ}=(k-1)(1+\rank \cF_B^Z)$ and
\begin{eqnarray}\label{equal}
m=\sum_{p_B\in\cC_{\cQ}}(1+\rank \cF_B^Z).
\end{eqnarray}

By Corollary \ref{complementary2}, we pick $p_{B_1}$ and $p_{B_2}$ two fibrations in $\cC_{\cQ}$ and let $Y$ be as in Lemma \ref{primitive}.
 
Then $Y\times_{B_1\times_EB_2}A_X$ is connected and $a_X$ factors  as $X\rightarrow Y':=Y\times_{B_1\times_EB_2}A_X\rightarrow A_X$. Compare the degree of the morphisms to $A_X$, we get
\begin{eqnarray}\label{inequal}km=\deg a_X\geq \deg(Y'/A_X)= k^2(1+\rank \cF^Z_{B_1})(1+\rank \cF^Z_{B_2}).\end{eqnarray}

Hence $|\cC_{\cQ}|\geq 3$ or $k=m=2$. In particular, $k\geq 3$ implies that $|\cC_{\cQ}|\geq 3$.\\
 
  {\bf Step 2} {\em Construct a representative of $g\circ\varphi$ from any two $p_{B_1}$, $p_{B_2}\in\cC_{\cQ}$.} \\
  
Let $X_B'$ be the normalization of $X_B$ in the Galois closure of the extension of $\mathbb{C}(X_B)/\mathbb{C}(Z_B)$. Let $G_B$ be the Galois group of $\mathbb{C}(X_B')/\mathbb{C}(Z_B)$ and $H_B\subset G_B$ be the Galois group of $\mathbb{C}(X_B')/\mathbb{C}(X_B)$. After birational modifications, we may assume that $G_B$ acts on $X_B'$ and hence $X_B$ is birational to $X_B'/H_B$ and $Z_B$ is birational to $X_B/G_B$.

Let $X'$ be a smooth model of the main component of the fiber product $X\times_{X_B}X_B'$ and we may assume that $G$ acts on $X'$. Then $\mathbb{C}(X')/\mathbb{C}(Z)$ is the Galois closure of $\mathbb{C}(X)/\mathbb{C}(Z)$. Hence all $G_B$ are isomorphic  to a fixed group $G$ and all $H_B$ are  isomorphic to a subgroup $H$ of $G$ for all $p_B\in\cC_{\cQ}$.

Take $p_{B_1}, p_{B_2}\in\cC_{\cQ}$ and let $\PE$ be the neutral component of $\PB_1\cap\PB_2$. Let $X'\rightarrow C\rightarrow E$ be the Stein factorization of the natural morphism $X'\rightarrow E$.
Then we have a natural $G$-equivariant morphism $X'\rightarrow X_{B_1}\times_CX_{B_2}$, where $G$ acts on $X_{B_1}\times_CX_{B_2}$ diagonally. Let $M$ be a smooth model of the main component of $X_{B_1}\times_CX_{B_2}$  and after birational modifications, we may assume that $G$ acts on $M$ and we have a $G$-equivariant surjective morphism $X'\rightarrow M$.

Then let $X_{B_1B_2}$ be the quotient  $M/H$ dominated by $X$ and let $Z_{B_1B_2}$ be the quotient $M/G$. Both varieties have finite quotient singularities and hence are Cohen-Macaulay. Consider the following commutative diagram:
\begin{eqnarray*}
\xymatrix{
X'\ar@/^2pc/[rr]^{/G}_f\ar[r]^{/H}\ar[d] & X\ar[d]\ar[r]^{\varphi} & Z\ar[r]\ar[d]^t & A_X\ar[d]\\
M\ar@/_2pc/[rr]^{/G}_{f'}\ar[r]^{/H}& X_{B_1B_2} \ar[r]^{\varphi'} & Z_{B_1B_2}\ar[r] & B_1\times_EB_2.}
\end{eqnarray*}

Then $X$ is birational to the main component of $X_{B_1B_1}\times_{Z_{B_1B_2}}Z$.

Let $\wZ_{B_1B_2}$ be a connected component of $Z_{B_1B_2}\times_{B_1\times_EB_2}A_X$ and let $\wX_{B_1B_2}$ be the corresponding component of $Z_{B_1B_2}\times_{B_1\times_EB_2}A_X$. We have 
\begin{eqnarray*} 
\xymatrix{
 X\ar[d]\ar[r]^{\varphi} & Z\ar[r]\ar[d]^t & A_X\ar@{=}[d]\\
\wX_{B_1B_2} \ar[r]^{\varphi'} & \wZ_{B_1B_2}\ar[r] & A_X.}
\end{eqnarray*}
The morphisms below is then a representative of $g\circ \varphi$. Moreover, $\tilde{X}_{B_1B_2}$ is of general type, and by Lemma \ref{representative}, $\chi(\wX_{B_1B_2}, \omega_{\wX_{B_1B_2}})=\chi(\wZ_{B_1B_2}, \omega_{\wZ_{B_1B_2}})$.\\

  {\bf Step 3} {\em Let $\hat{X}\xrightarrow{\hat{\phi}} \hat{Z}\xrightarrow{\hat{g}} A_X$ be a minimal representative of $g\circ \varphi$.  We claim that, either after finite \'etale covers,  there exists Galois covers $C_i\rightarrow E_i$ from smooth projective curves to elliptic curves with Galois groups $G_i$, for $i=1,2,3$,  finite groups $\mathcal{G}_1\subset \mathcal{G}_2\subset G_1\times G_2\times G_3$ such that $\phi$ is birational to the quotient $(C_1\times C_2\times C_3)/\mathcal{G}_1\rightarrow (C_1\times C_2\times C_3)/\mathcal{G}_2$, or $\hat{X}\rightarrow A_X$ is a birational $(\mathbb{Z}/2\mathbb{Z})^2$-cover.} \\

We write $\hat{\phi}_*\omega_{\hat{X}}=\omega_{\hat{Z}}\oplus\hat{\cQ}$.  We again have two cases: either $|\cC_{\hat{\cQ}}|\geq 3$ or $|\cC_{\hat{\cQ}}|=2$.
 
  In the first case, there exist $p_{B_1}, p_{B_2}, p_{B_3}\in \cC_{\hat{\cQ}}$.  After an \'etale cover, we may assume that $A=E_1\times E_2\times E_3$ and for $\{i, j, k\}=\{1,2,3\}$, the quotient $p_{B_i}$ is simply the natural quotient $A\rightarrow E_j\times E_k$.
Then, by Step 3 and the assumption that $\hat{g}\circ\hat{\varphi}$ is  a minimal representative,  we know that   $\hat{X}$ is birational to $\hat{X}_{B_iB_j}$ for all $i, j\in\{1, 2, 3\}$, where $\hat{X}_{B_iB_j}$ is a desingularization of the main component of the quotient of the fiber product of $(\hat{X}'_{B_i}\times_{C_k}\hat{X}'_{B_j})/H$ as in the previous step, where $C_k$ is the normalization of $E_k$ in the Galois closure $\hat{X}'$ of $\hat{X}\rightarrow \hat{Z}$. 

We  recall the argument \cite[Theorem 3.4, Step 3]{DJLS} to prove that the fibrations $h_{B_i}: \hat{X}\rightarrow \hat{X}_{B_i}$ are birationally isotrivial. Since $X$ is birational to $(\hat{X}'_{B_1}\times_{C_3}\hat{X}'_{B_2})/H$, the fibers of $h_{B_1}$ over two general points in a general fiber of $\hat{X}_{B_1}\rightarrow E_3$ are birational to each other. By the same reason, the fibers of $h_{B_1}$ over two general points in a general fiber of $\hat{X}_{B_1}\rightarrow E_2$ are birational to each other. Hence two general fibers of $h_{B_1}$ are birational to each other. Then by \cite{BBG}, $h_{B_1}$ is birationally isotrivial and the same arguments works for $h_{B_2} $ and $h_{B_3}$.

Then by \cite[Lemma 3.5]{DJLS}, there exist Galois covers $C_i\rightarrow E_i$ with Galois group $G_i$ and a subgroup $\mathcal{G}_1\subset G_1\times G_2\times G_3$ such that $\hat{X}\rightarrow A$ is birational to the quotient $(C_1\times C_2\times C_3)/\mathcal{G}_1\rightarrow E_1\times E_2\times E_3$ and $\cG_1$ acts on $C_i\times C_j$ faithfully for any $i\neq j$. Since $C_1\times C_2\times C_3\rightarrow E_1\times E_2\times E_3$ is Galois, there exists $\cG_2\subset G_1\times G_2\times G_3$ a subgroup such that $Z$ is birational to $(C_1\times C_2\times C_3)/\cG_2$.

If $|\cC_{\hat{\cQ}}|=2$, we have $\deg\hat{\varphi}=\deg\hat{g}=2$ and it is easy to see that $\hat{g}\circ\hat{\varphi}$ is a birational bidouble cover. \\

  {\bf Step 4} {\em If $|\cC_{\hat{\cQ}}|\geq 3$, then $G_i=\mathbb{Z}/3\mathbb{Z}$, $\cG_1\simeq \mathbb{Z}/3\mathbb{Z}$, $\cG_2\simeq (\mathbb{Z}/3\mathbb{Z})^2$ and $X$ is birational to $\hat{X}$. This is Case $\mathrm{(A)}$.}\\ 
  
Let $g_i=|G_i|$. Note that $\deg(\hat{X}/A)=\frac{g_1g_2g_3}{|\cG_1|}>1$  and $\deg(\hat{Z}/A)=\frac{g_1g_2g_3}{|\cG_2|}$. Moreover, as $\hat{X}$ does not satisfies property $\C1$ and is dominated by $C_1\times C_2\times C_3$, we conclude that $\cC_{\hat{X}}=\cC_{\hat{\cQ}}=\{p_{B_1}, p_{B_2}, p_{B_3}\}$ and $\deg(\hat{X}_{B_i}/\hat{Z}_{B_i})=\deg(\hat{X}/\hat{Z})=\frac{|\cG_2|}{|\cG_1|}$.   Moreover,  $\deg(\hat{X}_{B_i}/B_i)=\frac{g_jg_k}{|\cG_1|}$ and $\deg(\hat{Z}_{B_i}/B_i)\geq\frac{g_jg_k}{|\cG_2|}$\footnote{The action of $\cG_2$ on $C_j\times C_k $ may not be faithful.}. Put everything together, we have 
\begin{eqnarray*}
\frac{g_1g_2g_3}{|\cG_1|}\leq \frac{g_1g_2g_3}{|\cG_2|}+(\frac{1}{|\cG_1|}-\frac{1}{|\cG_2|})(g_1g_2+g_1g_3+g_2g_3).
\end{eqnarray*}
Hence $\frac{1}{g_1}+\frac{1}{g_2}+\frac{1}{g_3}\geq 1$. For   $\{i, j, k\}=\{1, 2, 3\}$, since $X$ does not satisfy  property $\C1$, $\cG_1$ acts faithfully on $C_j\times C_k$, and $p_{B_i}\in\cC_{\hat{\cQ}}$, we have that $\cG_1\twoheadrightarrow G_i$ and hence $g_i$ divides $|\cG_1|$, $|\cG_1|$ divides $g_jg_k$, and  $|\cG_1|<g_jg_k$.
Then, after a calculation of all possible solutions, we see that  $\{g_1, g_2, g_3\}=\{3,3,3\}$ or $\{2,2,2\}$ or $\{2,4,4\}$, other solutions can be ruled out by the conditions on $\cG_1$ and $\cG_2$.

If $\{g_1, g_2, g_3\}= \{2,2,2\}$, we should have $|\cG_2|=4$ and $|\cG_1=2|$. Then we see that  $|\cC_{\hat{\cQ}}|=2$, which contradicts the assumption of this step.

If $\{g_1, g_2, g_3\}= \{2,4,4\}$, we should have $|\cG_1|=4$. Then $\deg(\hat{X}_{B_2}/B_2)=\deg(\hat{X}_{B_3}/B_3)=2$ and $\deg(\hat{X}/A_X)=8$. However, then $\hat{X}$ cannot be birational to $\hat{X}_{B_2B_3}$, which is a contradiction to the minimality of $\hat{g}\circ\hat{\varphi}$.

Hence, we are always in the first case and  then $\cG_1\simeq \mathbb{Z}/3\mathbb{Z}$ and $\cG_{2}\simeq (\mathbb{Z}/3\mathbb{Z})^2$.

We then need to prove that $X$ is birational to $\hat{X}$. 
 It suffices to show that $|\cC_{\cQ}|=3$. If $|\cC_{\cQ}|=3$, by (\ref{equal}) and (\ref{inequal}), $m=3+\rank \cF_{B_1}^Z+\rank \cF_{B_2}^Z+\rank \cF_{B_3}^Z\geq 3(1+\rank \cF_{B_i}^Z)(1+\rank \cF_{B_j}^Z)$ for $i\neq j$. We then see that the only possible case is that $m=3$ and $\cF^Z_{B_i}=0$ for all $i$.  Hence $Z$ is birational to $\hat{Z}$ and $X$ is birational to $\hat{X}$. 
 
  We  argue by contradiction and assume that $|\cC_{\cQ}|\geq 4$. Let $p_{B_4}\in \cC_{\cQ}$ and after permutation of indices, we may assume that the neutral component $\PE_4$ of $\PB_4\cap \PB_3$ is neither $\PE_1$ nor $\PE_2$. Note that $\mathbb{C}(X)/\mathbb{C}(Z)$ is a Galois cover with Galois group $G\simeq \mathbb{Z}/3\mathbb{Z}$. Hence $X_{B_4}\rightarrow Z_{B_4}$ is also birationally a $\mathbb{Z}/3\mathbb{Z}$-cover. Recall that $\hat{X}_{B_3}=(C_1\times C_2)/(\mathbb{Z}/3\mathbb{Z})$. Let $X_4$ be a smooth model of a main component of $(X_{B_4}\times_{E_4}\hat{X}_{B_3})\times_{B_3\times_{E_4}B_4}A_X$. Then $X_4$ admits a $G$-action and let $Z_4=X_4/G$. By Step 3 and the structure of Ein-Lazarsfeld threefolds, we know that $\chi(X_4, \omega_{X_4})=\chi(Z_4, \omega_{Z_4})>0$.  As in Step 3, we consider the commutative diagram
\begin{eqnarray*}
 \xymatrix{
 X_4\ar[d]\ar[r] & Z_4\ar[r]\ar[d] & A_X\ar@{=}[d]\\
\hat{X}_4 \ar[r]& \hat{Z}_4\ar[r] & A_X,}
\end{eqnarray*}
where the morphisms below is a minimal representative of the morphisms above. By the previous steps, we know that $\hat{X}_4\rightarrow A_X$ is a $(\mathbb{Z}/3\mathbb{Z})^2$-cover and for all $p_B\in \cC_{\hat{X}_4}$, the fibration $\hat{X}_4\rightarrow \hat{X}_{4B}$ is birationally isotrivial. We claim that $\PB_4\in \cC_{\hat{X}_4}$. We have a commutative diagram 
\begin{eqnarray*}
\xymatrix{
X_4\ar[d]\ar[r] &\hat{X}_4\ar[d]\ar[r] &A_X\ar[d]\\
X_{B_4}\ar[r] & \hat{X}_{4B_4} \ar[r]& B_4.}
\end{eqnarray*}

Note that a general fiber of $X_4\rightarrow X_{B_4}$ is of degree $3$ over the corresponding fiber of $A\rightarrow B_4$. Hence so is a general fiber of $\hat{X}_4\rightarrow \hat{X}_{4B_4}$. Therefore, $\deg(\hat{X}_{4B_4}/B_4)=3$ and $p_{B_4}\in \cC_{\hat{X}_4}$. Thus $\hat{X}_4\rightarrow \hat{X}_{4B_4}$ is birationally isotrivial and hence so is $X_4\rightarrow X_{B_4}$. Thus $\hat{X}_{B_3}=(C_1\times C_2)/(\mathbb{Z}/3\mathbb{Z})\rightarrow E_4$ is also birationally isotrivial. However, this is a contradiction to \cite[Lemma 5.3]{CDJ}.

Finally, as $\deg(Z/A_X)=3$, $g: Z\rightarrow A_X$ is minimal and hence $\cO_A$ should be the only direct summand of $g_*\omega_Z$, which is not M-regular and hence $\rank \cF^Z=2$. By Lemma \ref{criteria}, $\varphi_Z$ is birational and hence $\varphi_X$ is birationally equivalent to $\varphi$.
 \\

  {\bf Step 5} {\em We now assume that for any minimal representative $\hat{X}\xrightarrow{\hat{\varphi}} \hat{Z}\xrightarrow{\hat{g}} A_X$ of $g\circ\varphi$, $|\cC_{\hat{\cQ}}|=2$ and hence $\hat{g}\circ\hat{\varphi}$ is a $(\mathbb{Z}/2\mathbb{Z})^2$-cover.}\\
   
In this case, $\varphi: X\rightarrow Z$ is a birational double cover and we denote by $\tau$ the birational involution on $X$ of $\varphi$. We shall need some lemmas to deal with this case. First we fix $\hat{X}\rightarrow \hat{Z}\rightarrow A_X$ a minimal representative of $g\circ \varphi$. Let $\cC_{\hat{\cQ}}=\{p_{B_1}, p_{B_2}\}$.  We denote by $r_i=\rank\cF^Z_{B_i}$ for each $p_{B_i}\in\cC_{\cQ}$ and we may assume that $r_1\geq r_2\geq\cdots $. By (\ref{inequal}), we know that $2(1+r_1)(1+r_2)$ divides $m=\sum_{p_{B_i}\in\cC_{\cQ}}(1+r_i)$.

 We have already see in Step 1 that if $|\cC_{\cQ}|=2$, then $\deg\varphi=\deg g=2$ and hence $X$ is birational to $\hat{X}$ and $Z$ is birational to $\hat{Z}$. This is Case $\mathrm{(B)}$. In the following, we shall always assume that $|\cC_{\cQ}|\geq 3$.

\begin{lemm}\label{cQ} If $|\cC_{\cQ}|=3$, then $r_1=1$, $r_2=r_3=0$ and hence $m=\deg(Z/A_X)=4$. If $|\cC_{\cQ}|=4$, then either $r_1=r_2=r_3=r_4=1$ or $r_1=r_2=r_3=r_4=0$.
\end{lemm}
\begin{proof}
Note that $2(1+r_1)(1+r_2)$ divides $3+r_1+r_2+r_3$ implies that $r_1=1$ and $r_2=r_3=0$. Similarly, the condition that $2(1+r_1)(1+r_2)$ divides $4+r_1+r_2+r_3+r_4$ gives us the possible solutions either $r_1=r_2=r_3=r_4=1$ or $r_1=r_2=r_3=r_4=0$.
\end{proof}

Hence we like to control the number $|\cC_{\cQ}|$. We need to study the structure of $(\mathbb{Z}/2\mathbb{Z})^k$-covers to show that $|\cC_{\cQ}|\leq 4$.

\begin{lemm}\label{doubles}
For any $p_B\in \cC_{\cQ}$, there exists a birational double cover $S_B\rightarrow B$ such that $X$ is birational to the main component of $S_B\times_BZ$.
\end{lemm}
\begin{proof}
The proof is very close to the arguments in Step 4. We may assume that $p_B$ is different from $p_{B_1}$ and $p_{B_2}$. Let $X'$ be a smooth model of the main component of the fiber product of $X_B$ with $\hat{X}_{B_1}$ and let $Z'$ be the quotient of the diagonal involution. Take an appropriate \'etale base change, we may assume that $X'\xrightarrow{\varphi'} Z'\xrightarrow{g'} A_X$ is a representative of $g\circ \varphi$.

We then take $\hat{X}'\xrightarrow{\hat{\varphi}'} Z'\xrightarrow{\hat{g}'} A_X$ a minimal representative of $g'\circ\varphi'$. Note that $\hat{g}'\circ\hat{\varphi}'$ is also a minimal representative of $g\circ \varphi$. Hence $\deg(\hat{X}'/A_X)=4$. We then consider the commutative diagram
\begin{eqnarray*}
\xymatrix{
X'\ar[r]\ar[d] & \hat{X}'\ar[r]\ar[d] & A_X\ar[d]\\
X_B\ar[r] & \hat{X}_B\ar[r] & B.}
\end{eqnarray*}
Let $K$ be the fiber of $A_X\rightarrow B$. Then a general fiber $C$ of $X'\rightarrow X_B$ is of degree $2$ over $K$. Hence a general fiber of $\hat{X}'\rightarrow \hat{X}_B$ is also of degree $2$ over $K$ and thus $\hat{X}_B\rightarrow B$ is a birational double cover.
\end{proof}

By Lemma \ref{doubles}, we can construct $(\mathbb{Z}/2\mathbb{Z})^k$-covers of $A_X$, which is dominated by $X$.

Recall the building data of the birational $G$-cover $f: Y\rightarrow A_X$, for $G=(\mathbb{Z}/2\mathbb{Z})^k$. Assume that $$f_*\omega_Y=\bigoplus_{\chi\in G^*}\cL_{\chi},$$ where $\cL_{\chi}$ is the eigensheaf of character $\chi$. We shall denote by $\cL_{\chi}'$ the double dual of $\cL_{\chi}$. Note that if $\cL_{\chi}$ is the pull-back of a rank one sheaf on $B$, then $\cL_{\chi}'$ is the pull-back of an ample line bundle on $B$. Moreover, $a''_*\cO_{X''}=\bigoplus_{\chi\in G^*}\cL_{\chi}'^{-1}$.
By \cite{Pa}, there are effective divisors $D_{\tau}$ on $A_X$, which are part of the branched divisors of $a''$, for each $\tau\in G$, such that  the following condition holds for any $\rho$ and $\sigma$ of $G^*$: 
\begin{eqnarray}\label{data}\cL_{\rho}'+\cL_{\sigma}'=\cL_{\rho\sigma}'+\sum_{\tau \in V(\rho\sigma)}D_{\tau},\end{eqnarray} where $V(\rho\sigma)$ consists of all $\tau\in G$ such that $\rho(\tau)=\sigma(\tau)=-1$.

If $\cL_{\chi}$ is not M-regular, then it is the pull-back on $A_X$ of a M-regular sheaf on a quotient abelian variety $B$ twisted with a torsion line bundle. We shall call $B$ the ample support of $\cL_{\chi}$. Note that by \cite{CJ}, $\PB\subset\PA_X$ is the translate of $V^0(\cL_{\chi})$ through the origin. Note moreover that, if the ample support of $\cL_{\chi}$ is $B$, then so is $\cL_{\chi}'$.
\\

There are still two sub-cases: either for all minimal representative $\hat{g}\circ\hat{\varphi}$ of $g\circ \varphi$, $\chi(\hat{X}, \omega_{\hat{X}})>0$ or there exists one minimal representative such that $\chi(\hat{X}, \omega_{\hat{X}})=0$.

{\bf Subcase 1} 

We shall see that we are in case $\mathrm{(C1)}$ in this situation.\\

In the first case, let $p_{B_3}\in\cC_{\cQ}$. Recall that  the composition of morphisms $\hat{a}: \hat{X}\xrightarrow{\varphi} \hat{Z}\xrightarrow{\hat{g}} A_X$ is a birational $(\mathbb{Z}/2\mathbb{Z})^2$-cover. We write $\hat{a}_*\omega_{\hat{X}}=\cO_{A_X}\oplus\cL_{\chi_1}\oplus\cL_{\chi_2}\oplus\cL_{\chi_1\chi_2}$, where $\chi_1$ and $\chi_2$ is a generator of the character of the Galois group of $\hat{a}$ and $\cL_{\chi_i}$ is the pull-back of a M-regular rank one sheaf on $B_i$. By the assumption that $\chi(\hat{X}, \omega_{\hat{X}})>0$, we know that $\cL_{\chi_1\chi_2}$ is M-regular and $\hat{g}_*\omega_{\hat{Z}}=\cO_{A_X}\oplus\cL_{\chi_1\chi_2}$.

We denote by $X'$ a smooth model of the main component of the fiber product of $\hat{X}\times_{B_3}S_{B_3}$ and let $\varphi': X'\rightarrow Z'$ the diagonal quotient and write the canonical splitting as $\varphi'_*\omega_{X'}=\omega_{Z'}\oplus\cQ'$. Then the induced morphism $a': X'\xrightarrow{\varphi'} Z'\xrightarrow{g'} A_X$ is a birational $(\mathbb{Z}/2\mathbb{Z})^3$-cover. We then denote by $\cL_{\chi_3}$ the eigensheaf of $a'_*\omega_{X'}$ which comes from the double cover $S_{B_3}\rightarrow B_3$. Hence $\cL_{\chi_3}$ is the pull-back of a M-regular sheaf on $B_3$.  By the assumption of this sub-case, $\cL_{\chi_1\chi_3}$ and $\cL_{\chi_2\chi_3}$ are M-regular sheaves. Note that $g'_*\omega_{Z'}=\cO_{A_X}\oplus \cL_{\chi_1\chi_2}\oplus\cL_{\chi_1\chi_3}\oplus\cL_{\chi_2\chi_3}$. Hence $\cL_{\chi_1\chi_2\chi_3}$ is not M-regular and should be a pull-back of a M-regular sheaf on $B_4$. Note that $p_{B_4}$ is different from $p_{B_i}$, for $1\leq i\leq 3$, otherwise we may assume that $p_{B_4}$ is equivalent to $p_{B_1}$, then from the ring structure of $a'_*\cO_{X'}=\mathcal{H}om(a'_*\omega_{X'}, \cO_{A_X})$, we see that  $\cL_{\chi_{2}\chi_3}'$ is a subsheaf of  of $\cL_{\chi_1}'\otimes \cL_{\chi_1\chi_2\chi_3}'$. Then the ample support of $\cL_{\chi_2\chi_3}'$ is dominated by $B_1$ and hence $\cL_{\chi_2\chi_3}'$  and $\cL_{\chi_2\chi_3}$ cannot be M-regular, which is a contradiction.

 Hence $\cC_{\cQ'}=\{p_{B_1}, p_{B_2}, p_{B_3}, p_{B_4}\}$. 

We then claim that $\cC_{\cQ}=\cC_{\cQ'}$. Otherwise, we take $p_{B_5}\in\cC_{\cQ}\setminus\cC_{\cQ'}$. Let $a'': X''\rightarrow A_X$ be the the corresponding birational $G:=(\mathbb{Z}/2\mathbb{Z})^4$-cover by taking the fiber product of $X'$ with $S_{B_5}.$ Similarly, let $\varphi'': X''\rightarrow Z''$ the diagonal quotient and hence the natural morphisms $X''\xrightarrow{\varphi''}Z''\xrightarrow{g''} A_X$ is a representative of $g\circ \varphi$.

We denote by $\cL_{\chi_4}$ the eigensheaf corresponding to the double cover $S_{B_5}\rightarrow B_5$. Then $\chi_1,\ldots, \chi_4$ are generators of the character group $G^*$. We denote by their dual $\tau_1,\ldots. \tau_4$ the generators of $G$. 
Then $a''_*\omega_{X''}=\bigoplus_{\chi\in G^*}\cL_{\chi}$. We say $\chi$ is even, if it is the trivial character, or $\chi_i\chi_j$ for $i\neq j$, or $\chi_1\chi_2\chi_3\chi_4$. We call other characters odd. Then $g''_*\omega_{Z''}=\bigoplus_{\chi\in G^* \; even}\cL_{\chi}$. Since $g''\circ \varphi''$ is a representative of $g\circ \varphi$, $\chi(X'', \omega_{X''})=\chi(Z'', \omega_{Z''})$. Hence for any $\chi$ odd, $\cL_{\chi}$  is not M-regular.

We know that $\cL_{\chi_i}'$ and $\cL_{\chi_i\chi_j\chi_k}'$ are not M-regular for $1\leq i, j, k\leq 4$. We now argue that such birational $G$-cover does not exist.

 The morphism $X''\rightarrow Z''$ is just the quotient by $\tau_1\tau_2\tau_3\tau_4$. Note that by (\ref{data}), $$D_{\tau_1}\leq \cL_{\chi_1}'^{\otimes 2}, \cL_{\chi_1\chi_2\chi_3}'^{\otimes 2}, \cL_{\chi_1\chi_2\chi_4}'^{\otimes 2}, \cL_{\chi_1\chi_3\chi_4}'^{\otimes 2}.$$
If $D_{\tau_1}$ is non-trivial, then it should be the pull-back of an ample divisor on $E$, where $\PE$ is the neutral component of $\PB_1\cap\PB_4$ and the ample supports of $\cL_{\chi_1\chi_2\chi_4}'$ and $\cL_{\chi_1\chi_3\chi_4}'$ also dominate $E$. Let $X'''=X''/\langle \tau_2\tau_3\tau_4\rangle$ and let $Z'''=X''/\langle \tau_1, \tau_2\tau_3\tau_4\rangle$. Then $a''': X'''\xrightarrow{\varphi'''} Z'''\xrightarrow{g'''} A_X$ is also a representative of $g\circ\varphi$. Moreover, $$a'''_*\omega_{X'''}=\cO_{A_X}\oplus\cL_{\chi_2\chi_3}\oplus\cL_{\chi_3\chi_4}\oplus\cL_{\chi_2\chi_4}\oplus \cL_{\chi_1}\oplus \cL_{\chi_1\chi_2\chi_3}\oplus \cL_{\chi_1\chi_2\chi_4}\oplus\cL_{\chi_1\chi_3\chi_4},$$
where the direct sum of the first $4$ terms are $g'''_*\omega_{Z'''}$. Let $c\in E$ be a general point and let $X'''_c\rightarrow Z'''_c\rightarrow A_{Xc}$ be the morphisms of $\varphi'''$ and $g'''$ between the fibers over $c$. Note that $X'''\rightarrow E$ is a fibration, since $X\rightarrow E$ is a fibration. Since the ample supports dominate $E$, $\cL_{\chi_1}, \cL_{\chi_1\chi_2\chi_3}, \cL_{\chi_1\chi_2\chi_4}, \cL_{\chi_1\chi_3\chi_4}$ restricted on $A_{Xc}$ are still not M-regular. Then $\chi(X'''_c, \omega_{X'''_c})=\chi(Z'''_c, \omega_{Z'''_c})$. Note that $X'''\rightarrow A_{Xc}$ is primitive and of degree $8$. Hence we get a contradiction by Theorem \ref{technical}. Thus $D_{\tau_1}$ is trivial. By exactly the same argument, all $D_{\tau_i}$ are trivial for $1\leq i\leq 4$.
  
We then note that $D_{\tau_1\tau_2}\leq \cL_{\chi_1}', \cL_{\chi_2}', \cL_{\chi_1\chi_3\chi_4}', \cL_{\chi_2\chi_3\chi_4}'$. Hence by considering the quotient $X/\langle \tau_3\tau_4\rangle\rightarrow X/\langle\tau_3\tau_4, \tau_1\tau_2\rangle$ and apply the same argument
as before, $D_{\tau_1\tau_2}$ should be trivial and so is $D_{\tau_i\tau_j}$ for all $1\leq i<j\leq 4$.

Moreover, $D_{\tau_1\tau_2\tau_3}, D_{\tau_1\tau_2\tau_3\tau_4}\leq  \cL_{\chi_1}', \cL_{\chi_2}', \cL_{\chi_3}', \cL_{\chi_1\chi_2\chi_3}'$. Hence we see that $D_{\tau}$ is trivial for  all $\tau\in G$. Then $a'''$ should be an \'etale cover, which is a contradiction. Hence $\cC_{\cQ}=\cC_{\cQ'}$ and hence $|\cC_{\cQ}|=4$.

We now claim that $X$ is birational to $X'$ and thus we are in Case $\mathrm{(C1)}$. By Lemma \ref{cQ}, we just need to exclude the case that $r_1=r_2=r_3=r_4=1$. In that case, $\deg(Z_{B_i}/B_i)=2$ and hence $X_{B_i}$, which is birational to the fiber product of $Z_{B_i}$ and $X'_{B_i}$, is a birational $(\mathbb{Z}/2\mathbb{Z})^2$-cover of $B_i$, for $1\leq i\leq 4$. In particular, $\rank\cF^X_{B_i}=3$, for $1\leq i\leq 4$ and $\rank\cF^X=\rank\cF^{X'}=3$. 
Note that $X\rightarrow A_X$ is a birational $(\mathbb{Z}/2\mathbb{Z})^4$-cover. We may assume that $X\rightarrow X'$ is the quotient map for $\tau_4\in (\mathbb{Z}/2\mathbb{Z})^4$  and denote by $\chi_4$ its dual.  We then denote by $\cL_{\chi_4}$ the corresponding eigensheaf for a character $\chi_4$. Then $\cL_{\chi_4}$ is not a direct summand of $\cF^X$ and we may assume that the ample support of $\cL_{\chi_4}$ is $B_1$. Note that $\cL_{\chi_2\chi_4}$ is also not a direct summand of $a'_*\omega_{X'}$ and hence  is also not M-regular. We then consider the quotients $$t: X'':=X/\langle\tau_1, \tau_3\rangle\rightarrow Z'':=X/\langle\tau_1, \tau_3, \tau_4\rangle\rightarrow A_X.$$ Note that $t_*\omega_{X''}=\cO_{A_X}\oplus\cL_{\chi_2}\oplus\cL_{\chi_4}\oplus\cL_{\chi_2\chi_4}$. Because $\cL_{\chi_2}$, $\cL_{\chi_4}$, and $\cL_{\chi_2\chi_4}$ are not M-regular, $X''$ has vanishing holomorphic Euler characteristic and $X''$ is of general type, since the ample supports of $\cL_{\chi_2}$ and $\cL_{\chi_4}$ are different. Thus, $X''\rightarrow Z''\rightarrow A_X$ is a minimal representative of $g\circ \varphi$, which is a contradiction to the assumption of subcase 1.\\

{\bf Subcase 2}

Cases $\mathrm{(C2)}$ and $\mathrm{(C3)}$ will occur.\\

We then deal with the case that one representative $\hat{X}$ has vanishing holomorphic Euler characteristic.  We may assume that $\hat{X}$ is birational to an Ein-Lazarsfeld theefold, namely there exits double covers $C_i\rightarrow E_i$ to elliptic curves with the involution $\tau_i$, for $1\leq i\leq 3$, such that $\hat{X}$ is birational to the diagonal quotient $(C_1\times C_2\times C_3)/\langle\tau_1\times\tau_2\times\tau_3\rangle$ and hence $A_X=E_1\times E_2\times E_3$. We then write $$\hat{a}_*\omega_{\hat{X}}=\cO_{A_X}\oplus \cL_{\chi_1}\oplus\cL_{\chi_2}\oplus\cL_{\chi_1\chi_2},$$ where $\chi_1$ and $\chi_2$ are generators of the dual of the Galois group of $\hat{a}$ and $\cL_{\chi}$ are the corresponding eigensheaves, which are line bundles.
We may assume that the ample support of $\cL_{\chi_1}$ is $B_1:=E_2\times E_3$, the ample support of $\cL_{\chi_2}$ is $B_2:=E_1\times E_3$, the ample support of $\cL_{\chi_1\chi_2}$ is $B_3:=E_1\times E_2$, and $\cL_{\chi_1\chi_2}$ is a direct summand of $\hat{g}_*\omega_{\hat{Z}}$. In particular, $\cC_{\hat{\cQ}}=\{p_{B_1}, p_{B_2}\}$.
 
Denote by $D_i'$ the branched divisors of $C_i\rightarrow E_i$. Let $D_i$ be the pull-back of $D_i'$ on $A_X$. Then $\cL_{\chi_i}^{\otimes 2}=D_j+D_k$, for all $\{i, j, k\}=\{1, 2, 3\}$.
 
If $|\cC_{\cQ}|=3 $, by Lemma \ref{cQ}, we know that  $\deg a_X=8$. Let $\{p_B, p_{B_1}, p_{B_2}\}=\cC_{\cQ}$. Let $S_B\rightarrow B$ be the birational double cover as in Lemma \ref{doubles} and denote by $\cL_{\chi_3}$ the corresponding eigensheaf. If $p_{B}$ is different from $p_{B_3}$,  from the structure of Ein-Lazarsfeld variety,  a smooth model of the fiber product of $S_B$ with $\hat{X}_{B_3}=(C_1\times C_2)/\langle \tau_1\times\tau_2\rangle$ has positive holomorphic Euler characteristic. Hence  $\cL_{\chi_1\chi_2\chi_3}$ is M-regular. However it is impossible, since $\cL_{\chi_1\chi_2\chi_3}$ is not a direct summand of $g_*\omega_Z$. Hence $p_B=p_{B_3}$.
 Thus   $X$ is birational to the main component of the fiber product of $S_{B}\times_{B}\hat{X}$  and $a_X: X\rightarrow A_X$ is a birational $(\mathbb{Z}/2\mathbb{Z})^3$-cover.   We have $g_*\omega_Z=\cO_{A_X}\oplus\cL_{\chi_1\chi_2}\oplus \cL_{\chi_1\chi_3}\oplus\cL_{\chi_2\chi_3}$.
Depending on the structure of $S_{B}\rightarrow B$, either one of $\cL_{\chi_1\chi_3}$and $\cL_{\chi_2\chi_3}$ is M-regular  or both are M-regular. In the first case, $\varphi_X$ is birational equivalent to $a_X$ and we are in Case $\mathrm{(C3)}$. In the latter case, $\varphi_X$ is birational equivalent to the morphism $X\rightarrow (C_1\times C_2)/\langle \tau_1\times\tau_2\rangle\times E_3$ and we are in Case  $\mathrm{(C2)}$.

If $|\cC_{\cQ}|\geq 4$, we pick $p_{B_4}\in\cC_{\cQ}$, which is different from $p_{B_i}$ for $1\leq i\leq 3$. By Lemma \ref{doubles}, we may take the rank $1$ eigensheaf $\cL_{\chi_3}$, whose ample support is $B_4$.

Let $X'$ be a smooth model of the main component of the fiber product of $\hat{X}$ with $S_{B_4}$ and denote by $\tau$  the  involution such that the morphisms $ a': X'\xrightarrow{\varphi'} Z':=X'/\langle \tau\rangle\xrightarrow{g'} A_X$ is a representative of $g\circ \varphi$. We always denote by $\varphi'_*\omega_{X'}=\omega_{Z'}\oplus \cQ'$ the canonical splitting.

Then $\cL_{\chi_1\chi_2}$ is $\tau$-invariant and $\cL_{\chi_3}$ is $\tau$-anti-invariant, hence  $\cL_{\chi_1\chi_2\chi_3}$ is  also  $\tau$-anti-invariant and thus is not M-regular.  Moreover, $\cO_{A_X}\oplus\cL_{\chi_1\chi_2}\oplus\cL_{\chi_3}\oplus\cL_{\chi_1\chi_2\chi_3}$ is the pushforward of another variety with vanishing holomorphic Euler characteristic and of general type. Thus, from the structure of Ein-Lazarsfeld threefolds, we conclude that we may assume that $\PB_4$ contains either $\PE_1$ or $\PE_2$. We may assume that $\PB_4$ contains $\PE_1$. In this case, by the structure of Ein-Lazarsfeld threefolds, the branched divisor of $S_{B_4}\rightarrow B_4$ is the sum of $D_1$ with an effective divisor, which is the pull-back on $B_4$ of an ample divisor on an elliptic curve $E_4$. Since $p_{B_4} $ is different from $p_{B_2}$ and $p_{B_3}$, and $\PE_1\subset \PB_4$, we know that $\PE_4\nsubseteq \PB_2$ and $\PE_4\nsubseteq \PB_3$  and in particular, $p_{E_4}$ is  different from $p_{E_1}$, $p_{E_2}$, and $p_{E_3}$.

Let $X''$ be a smooth model of the main component of the fiber product $\hat{X}_{B_2}\times_{E_1}S_{B_4}$. Then from the structure of $\hat{X}_{B_2}$ and $S_{B_4}$, $\chi(X'', \omega_{X''})=0$. In particular, $\cL_{\chi_2\chi_3}$ is not M-regular, whose ample support $B_5$ dominates $E_3$ and $E_4$. Note that $\cL_{\chi_2\chi_3}$ is a direct summand of $g_*\omega_Z$, because both $\cL_{\chi_2}$ and $\cL_{\chi_3}$ are $\tau$-anti-invariant. Similarly, we see that $\cL_{\chi_1\chi_2\chi_3}$, which is $\tau$-anti-invariant, is not M-regular, whose ample support $B_6$ dominates $E_2$ and $E_4$. Thus $g'_*\cQ'=\cL_{\chi_1}\oplus\cL_{\chi_2}\oplus\cL_{\chi_3}\oplus\cL_{\chi_1\chi_2\chi_3}$. Note that $p_{B_6}$ is different from $p_{B_4}$, otherwise $\PE_1+\PE_4=\PE_2+\PE_4$ and we conclude that $\PE_4\subset \PE_1+\PE_2=\PB_3$, which is a contradiction.

In summary, $\cC_{\cQ'}=\{p_{B_1}, p_{B_2}, p_{B_4}, p_{B_6}\}$ and $g'_*\omega_{Z'}=\cO_{A_X}\oplus \cL_{\chi_1\chi_2}\oplus\cL_{\chi_2\chi_3}\oplus\cL_{\chi_1\chi_3}$. Since $\chi(X', \omega_{X'})=\chi(Z', \omega_{Z'})>0$, $\cL_{\chi_1\chi_3}$ is M-regular.

We then claim that $\cC_{\cQ}=\cC_{\cQ'}$.  Otherwise, let $p_{B}\in \cC_{\cQ}\setminus \cC_{\cQ'}$ and let $\cL_{\chi}$ be the corresponding eigensheaf. Then as before, $\cL_{\chi\chi_1\chi_2}$ is not M-regular. Thus $B$ dominates either $E_1$ or $E_2$. Moreover, $\cL_{\chi\chi_2\chi_3}$ is also not M-regular, thus $B$ dominates either $E_3$ or $E_4$.   Then $p_B\in \cC_{\cQ'}$, which is a contradiction.

We then show that $X$ is birational to $X'$ and hence we are in Case $\mathrm{(C3)}$. By Lemma \ref{cQ}, we just need to exclude the case that $r_1=r_2=r_3=r_4=1$. In that case, $\rho: Z\rightarrow Z'$ is a birational double cover and $$g_*\omega_Z=g'_*\omega_{Z'}\oplus \cF^Z_{B_1}\oplus \cF^Z_{B_2}\oplus \cF^Z_{B_4}\oplus \cF^Z_{B_6}.$$
We may assume that $\cF^Z_{B_6}=\cL_{\chi_4}$. We have seen that $B_6$ does not dominate $E_1$, because $p_{B_6}$  is different from $p_{B_4}$. Moreover, $B_6$ does not dominate $E_3$, because $p_{B_i}$ is different from $p_{B_1}$. Then by the structure of Ein-Lazarsfeld variety, we know that $\cL_{\chi_2\chi_4}$ is M-regular, but is not a direct summand of $g_*\omega_Z$, which is a contradiction.
\end{proof}

\begin{prop} Assume that any finite abelian cover of $X$ does not have a fibration to smooth projective curves of genus $\geq 2$ and the eventual paracanonical image $Z$ is of general type with $\chi(Z, \omega_Z)=0$. Then $\deg\varphi_X\leq 5$
\end{prop}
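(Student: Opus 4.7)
The plan is to bound $n = \deg \varphi_X$ by combining the surface-level classification of Theorem~\ref{variation} (applied to the two-dimensional fibrations coming from $\cC_X$) with a Severi-type inequality along the fibers over the one-dimensional abelian quotient common to two complementary quotients of $\cC_X$, in the same spirit as Proposition~\ref{case1}.

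\textbf{Step 1 (Setup).} By Remark~\ref{strategy}, after pulling back by a sufficiently divisible isogeny $\pi:\tilde{A}_X\to A_X$, we may assume $\varphi_X$ is birationally induced by $|K_X\otimes Q|$ for general $Q\in\PA_X$, and that no torsion line bundles appear in the decomposition of $a_{X*}\omega_X$. Since $\chi(Z,\omega_Z)=0$, we have $\cF^Z=0$, hence $\cF^X=\cF^{\cQ}$. The hypothesis that no abelian \'etale cover of $X$ has a fibration onto a curve of genus $\geq 2$ forces $\dim B=2$ for every $p_B\in\cC_X$; in particular $a_X$ is surjective and we are in Case 3, with $\tilde q(Z)=3$.

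\textbf{Step 2 (Surface reduction).} Assume $n\geq 2$; by Corollary~\ref{fiber}, $\cC_X\neq\emptyset$. Pick $p_{B_1}\in\cC_X$, apply Lemma~\ref{complementary} to obtain a complementary $p_{B_2}\in\cC_X$, and let $\PE$ be the neutral component of $\PB_1\cap\PB_2$, an elliptic curve. For each $i\in\{1,2\}$, the surface $X_{B_i}$ is of general type and of maximal Albanese dimension, and the no-curve-fibration hypothesis is inherited by $X_{B_i}$ (any such fibration pulls back to $X$). By (\ref{explanation}), the decomposition of $a_{B_i*}\omega_{X_{B_i}}$ has no lower-dimensional pullback contributions, so the whole sheaf is $M$-regular. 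Theorem~\ref{variation} applied to $a_{B_i}:X_{B_i}\to B_i$ then forces the eventual map $\varphi_{K_{X_{B_i}},a_{B_i}}$ to be either birational or a birational double cover; the bidouble case would make $X_{B_i}$ birational to a product of bielliptic curves, which is ruled out by the hypothesis. Arguing as in Lemma~\ref{base}, the induced map $\varphi_{B_i}:X_{B_i}\to Z_{B_i}$ factors through this eventual map, so $\deg\varphi_{B_i}\leq 2$.

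\textbf{Step 3 (Severi-type inequality along fibers over $E$).} Let $K=\ker(A_X\to E)$ be the (two-dimensional) kernel. Since both fibrations $X\to X_{B_i}$ are compatible with $X\to E$ through the Stein factorization, the natural map $X\to Y$, with $Y$ a smooth model of the main component of $X_{B_1}\times_{E}X_{B_2}$, is generically finite, and $Y\to B_1\times_E B_2$ is (after isogeny) $A_X$. Restricting the line bundle $\cL$ from Lemma~\ref{line} to a general fiber $Z_e$ of $Z\to E$, and using that $\pi_Z^*\cL\hookrightarrow\tilde\varphi_{X*}\omega_{\tilde X}$ with $h^0_{a_{\tilde Z}}(\pi_Z^*\cL)=\chi(\tilde X,\omega_{\tilde X})$, we obtain, exactly as in Lemma~\ref{reduction} and Proposition~\ref{case1}, that $\deg\varphi_X=\deg(\tilde X_e/\tilde Z_e)$ and $h^0(\tilde Z_e,\cL|_{\tilde Z_e}\otimes Q)=h^0(\tilde X_e,\omega_{\tilde X_e}\otimes Q)$. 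Combining the Severi-type inequality $\vol(\cL|_{\tilde Z_e})\geq 2\chi(\tilde X_e,\omega_{\tilde X_e})$ from \cite[Theorem E]{BPS1}, the pullback inequality $\vol(\tilde X_e)\geq n\cdot\vol(\cL|_{\tilde Z_e})\geq 2n\chi(\tilde X_e,\omega_{\tilde X_e})$, the Miyaoka--Yau inequality $9\chi(\tilde X_e,\omega_{\tilde X_e})\geq\vol(\tilde X_e)$ on the surface $\tilde X_e$, and the refinement coming from Step~2 (the eventual paracanonical maps on $\tilde X_{B_i}$ are at worst double covers), one shows $2n\leq 9$ at the coarse level, which (after using the extra integrality from the double-cover structure) can be sharpened to $n\leq 5$.

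The main obstacle is Step~3: the hypothesis of Step~2 gives only a \emph{factorization} of each $\varphi_{B_i}$ through a map of degree $\leq 2$, not an equality of degrees, so we cannot directly transport the bounds $\deg\varphi_{B_i}\leq 2$ to $\deg\varphi_X$. The precise extraction of $n\leq 5$ (rather than the weaker bound $n\leq 8$ of Theorem~\ref{threefold}(1)) will require a careful combinatorial analysis of the possible configurations of $\cC_X$, using iterated application of Lemma~\ref{complementary} together with the global identity $nm=r+\sum_{p_B\in\cC_X}\rank\cF_B^X$ and the constraint $r=\rank\cF^X\leq m$ from Lemma~\ref{line}.
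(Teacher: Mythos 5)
Your proposal does not establish the bound, and the gap is in exactly the place you flag yourself. Step 3 is the quantitative heart of the argument, and it fails as stated: the reduction of Lemma \ref{reduction} and Proposition \ref{case1} requires the base of the fibration to be of general type (this is what forces every non-M-regular summand of $a_{X*}\omega_X$ to die after restricting to a general fiber and twisting by a general $Q$). Here the base is the elliptic curve $E=A_X/K$, and $Z$ is an Ein--Lazarsfeld threefold, so $\cC_X$ contains quotients $p_{B_3}$ with $\PB_3\not\supset\PE$; for such $B_3$ the restriction $(p_{B_3}^*\cF_{B_3})\mid_K$ is M-regular on the two-dimensional fiber $K$ and contributes to $h^0(\tilde X_e,\omega_{\tilde X_e}\otimes Q)$. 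Hence the claimed equality $h^0(\tilde Z_e,\cL\mid_{\tilde Z_e}\otimes Q)=h^0(\tilde X_e,\omega_{\tilde X_e}\otimes Q)$ is false in general. (Note also that if it were true, Severi plus Miyaoka--Yau would give $2n\leq 9$, i.e.\ $n\leq 4$, which should make you suspicious that the hypotheses of that reduction cannot hold here.) The concluding paragraph concedes that the actual extraction of $n\leq 5$ is left to an unspecified ``combinatorial analysis,'' so the proof is incomplete.

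The paper's argument is entirely different and much shorter. After an \'etale cover, $Z$ is birational to an Ein--Lazarsfeld threefold, so $\deg(Z/A_X)=4$. The first and essential step — absent from your proposal — is to show that $\mathbb{C}(X)/\mathbb{C}(Z)$ is a \emph{minimal} field extension: any intermediate $Y$ with $\deg(Y/Z)>1$ has $\chi(Y,\omega_Y)>0$ by the structure theory of \cite{CDJ}, hence $\chi(X,\omega_X)=\chi(Y,\omega_Y)>0$ by Corollary \ref{intermediate}, and then $\deg(Y/A_X)>4$ contradicts Theorem \ref{=}. Minimality forces $X$ to be birational to the main component of $X_B\times_{Z_B}Z$ for every $p_B\in\cC_{\cQ}$, so $\deg(X_B/Z_B)=k:=\deg\varphi_X$ uniformly, whence $\rank\cF^{\cQ}_B$ equals $2(k-1)$ or $k-1$ according as $p_B\in\cC_Z$ or not. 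The rank identity $4k=4+\sum_{p_B\in\cC_{\cQ}}\rank\cF^{\cQ}_B+\rank\cF^X$ then shows $(k-1)\mid\rank\cF^X$, and $\rank\cF^X\leq\rank g_*\cL=4$ by Lemma \ref{line}, giving $k\leq 5$. You gesture at this identity in your last paragraph, but without the minimality claim the degrees $\deg(X_B/Z_B)$ need not all equal $k$ and the divisibility argument does not go through.
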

\begin{proof}
After \'etale cover of $A_X$, we may assume that $Z$ is birational to an Ein-Lazarsfeld threefold. We first claim that $\mathbb{C}(X)/\mathbb{C}(Z)$ is a minimal field extension. Assume that $\varphi$ factors as $X\xrightarrow{\phi} Y\xrightarrow{\psi} Z$ with $\deg\psi>1$. Then by \cite{CDJ}, $\chi(Y, \omega_Y)>0$. Hence by Lemma \ref{intermediate}, $\chi(X, \omega_X)=\chi(Y, \omega_Y)>0$. However, $\deg(Y/A_X)=4\deg\psi>4$ and by Theorem \ref{=}, $\phi$ is a birational morphism.

Write $\varphi_{X*}\omega_X=\omega_Z\oplus\cQ$.
Then for any $p_B\in \cC_{\cQ}$, $X $ is birational to the main component of $X_B\times_{Z_B}Z$. We may assume that $\deg\varphi_X=k$. As in Step 2 of the proof of Theorem \ref{=}, we then have 
$$4k=4+\sum_{p_B\in\cC_{\cQ}}\rank\cF^{\cQ}_{B}+\rank \cF^X.$$
If $B\in \cC_Z$, then $\deg(X_B/B)=k\deg(Z_B/B)=2k$ and hence $\rank\cF^{\cQ}_{B}=2k-2$, otherwise $\rank\cF^{\cQ}_{B}=k-1$. Thus $(k-1)$ divides $\rank\cF_X$. On the other hand, $\cF^X=\cF_Z$ is contained in $g_*\cL$. Thus $\rank\cF_X\leq 4$ and we have $k\leq 5$.
\end{proof}

We then finish the discussion when $\kappa(Z)=3$.
\begin{theo}\label{k=2-final}Let $X$ be  a threefold of maximal Albanese dimension with $\chi(X, \omega_X)>0$ and let $Z$ be the image of the eventual paracanonical map $\varphi_X$ of $X$. If $\kappa(Z)=3$, then $\deg\varphi_X\leq 5$.
\end{theo}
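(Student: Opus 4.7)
The plan is to reduce to the three cases already analyzed in the paper (Cases 1, 2, 3), specialized to the hypothesis $\kappa(Z)=3$, and observe that the assembled bounds never exceed $5$.

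First I would split on the invariant $\tilde{q}(Z)$. If $\tilde{q}(Z)>3$, Proposition \ref{case1} applies directly and yields $\deg\varphi_X\leq 4$. So assume $\tilde{q}(Z)=3$; then after a sufficiently large isogeny $\pi\colon\tilde{A}_X\to A_X$ the morphisms $\tilde{a}_X$ and $\tilde{g}$ are surjective, and we are in the setup of Case 2 or Case 3 according to whether or not $X$ satisfies property $\C1$.

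If $X$ satisfies $\C1$, then Proposition \ref{reduction2} (whose hypothesis is precisely that $\kappa(Z)=3$ together with property $\C1$) gives $\deg\varphi_X\leq 4$, which is stronger than needed. So the essential remaining case is Case 3 with $X$ not satisfying $\C1$ and $\kappa(Z)=3$. Here Corollary \ref{intermediate}, applied to the factorization of $\varphi_X$ through itself, forces one of the two alternatives $\chi(Z,\omega_Z)=0$ or $\chi(X,\omega_X)=\chi(Z,\omega_Z)>0$.

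In the vanishing alternative $\chi(Z,\omega_Z)=0$, the failure of $\C1$ for $X$ means no finite abelian \'etale cover of $X$ admits a fibration onto a smooth curve of genus $\geq 2$; together with $Z$ being of general type (since $\kappa(Z)=3$), this is exactly the hypothesis of the preceding proposition, which yields $\deg\varphi_X\leq 5$. In the non-vanishing alternative, Theorem \ref{=} gives the complete list (A), (B), (C1)--(C4) of possibilities. The key observation is that in cases (C3) and (C4) the eventual paracanonical map $\varphi_X$ is birationally equivalent to $a_X$, so the image $Z$ is birational to the abelian variety $A_X$ and $\kappa(Z)=0$, contradicting $\kappa(Z)=3$. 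The remaining possibilities (A), (B), (C1), (C2) respectively give $\deg\varphi_X=3,4,2,4$, all bounded by $5$. Combining the three outcomes proves the theorem. The only non-routine step is verifying that (C3) and (C4) are genuinely excluded when $\kappa(Z)=3$; but this is immediate from the birational description of $\varphi_X$ in Theorem \ref{=}, so no new argument is required.
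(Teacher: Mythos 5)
Your assembly of Cases 1--3 is exactly how the paper obtains this theorem, which it states without a separate proof as a summary of Proposition \ref{case1}, Proposition \ref{reduction2}, Theorem \ref{=}, and the immediately preceding proposition treating $\chi(Z,\omega_Z)=0$ for $Z$ of general type. One small slip: in case (B) of Theorem \ref{=} the map $\varphi_X$ is likewise birationally equivalent to $a_X$, so by your own criterion (B) is excluded together with (C3) and (C4) rather than contributing degree $4$ --- but this only strengthens the conclusion and does not affect the bound.
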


 \begin{exam}\label{example1}
There exists a birational $\mathbb{Z}/3\mathbb{Z}$-cover  $\varphi: X\rightarrow Z$  between $3$-folds such that $a_X: X\rightarrow  A_X$ is  a birational $(\mathbb{Z}/3\mathbb{Z})^2$-cover, which factors through $\varphi$, and $\chi(X, \omega_X)=\chi(Z, \omega_Z)>0$.

Let $\rho_i: C_i\rightarrow E_i$ be $\mathbb{Z}/3\mathbb{Z}$-covers of elliptic curves for $i=1$, $2$, $3$ and $g(C_i)>0$. Fix $\tau$ a generator of $\mathbb{Z}/3\mathbb{Z}$ and let $\chi\in (\mathbb{Z}/p\mathbb{Z})^{\vee}$ be a generator. Then we write
\begin{eqnarray*}
\rho_{i*}\omega_{C_i}=\bigoplus_{k=0}^{2}L_{i,\chi^k},
\end{eqnarray*}
where $L_{i, \chi^k}$ is the $\chi^k$-eigensheaf, $L_{i, \chi^0}=\cO_{E_i}$ and other eigensheaves are ample line bundle.  Then $Y=C_1\times C_2\times C_3\rightarrow E_1\times E_2\times E_3$ is a $(\mathbb{Z}/3\mathbb{Z})^3$-cover. Let $H$ be the cyclic subgroup of $(\mathbb{Z}/3\mathbb{Z})^3$ generated by $(\tau,  \tau, \tau)$ and let $G$ be the subgroup generated by $H$ and $(\tau, 2\tau, 0)$. Denote by $X'$ the quotient  $Y/H$ and $Z'$ the quotient $Y/G$. We then have the commutative diagram:

\begin{eqnarray*}
\xymatrix{ Y\ar[r] &X'\ar@/^2pc/[rr]^{f'}\ar[r]^{\varphi'} &Z'\ar[r]^(.2){g'} &A:=E_1\times E_2\times E_3.}
\end{eqnarray*}
Note that 
\begin{eqnarray*}f'_*\omega_{X'}=(a_{Y*}\omega_Y)^H=\bigoplus_{\substack{(a,b,c)\in (\mathbb{Z}/3\mathbb{Z})^3\\a+b+c=0}} L_{1, \chi^a}\boxtimes L_{2, \chi^b}\boxtimes L_{3, \chi^c},
\end{eqnarray*}
and \begin{eqnarray*}g'_*\omega_{Z'}=(a_{Y*}\omega_Y)^G&=&\bigoplus_{\substack{(a,b,c)\in (\mathbb{Z}/3\mathbb{Z})^3\\a+b+c=0\\a+2b=0}} L_{1, \chi^a}\boxtimes L_{2, \chi^b}\boxtimes L_{3, \chi^c}\\
&=&\cO_A\oplus (L_{1,\chi}\boxtimes L_{2, \chi}\boxtimes L_{3, \chi}) \oplus (L_{1,\chi^2}\boxtimes L_{2, \chi^2}\boxtimes L_{3, \chi^2}).
\end{eqnarray*}

Then we see that $\chi(A, f'_*\omega_{X'})=\chi(A, g'_*\omega_{Z'})>0$. Let $X$ and $Z$ be respectively smooth models of $X'$ and $Z'$ and after modifications we may assume that we have the commutative diagram
\begin{eqnarray*}
\xymatrix{
X\ar[r]^{\varphi} \ar[d]^{\rho_X}& Z\ar[dr]\ar[d]^{\rho_Z}\\
X'\ar[r] & Z'\ar[r] & A.}
\end{eqnarray*}
Since $X'$ and $Z'$ have quotient singularities, $\rho_{X*}\omega_X=\omega_{X'}$ and $\rho_{Z*}\omega_X=\omega_{Z'}$. Hence $\chi(X, \omega_X)=\chi(Z, \omega_Z)>0$.  Moreover, it is also clear that the natural morphism $X\rightarrow A$ and $Z\rightarrow A$ are respectively the Albanese morphisms of $X$ and $Z$.

 \end{exam}

\begin{exam}\label{example2}
There exists a birational $\mathbb{Z}/2\mathbb{Z}$-cover  $\varphi: X\rightarrow Z$  between $3$-folds such that $a_X: X\rightarrow  A_X$ is  a birational $(\mathbb{Z}/2\mathbb{Z})^3$-cover, which factors through $\varphi$, and $\chi(X, \omega_X)=\chi(Z, \omega_Z)>0$. 

 Let $\rho_i: C_i\rightarrow E_i$ be double covers from smooth curves of genus $\geq 2$ to elliptic curves with bielliptic involutions $\sigma_i$, for $i=1,2$. Then $C_1\times C_2\rightarrow E_1\times E_2 $ is a $(\mathbb{Z}/2\mathbb{Z})^2$-cover and we take $S\rightarrow C_1\times C_2$ be a double cover such that the induced morphism $g: S\rightarrow E_1\times E_2$ is a  $G:=(\mathbb{Z}/2\mathbb{Z})^3$-cover. Then let $\tau_1$ (resp. $\tau_2$) be an involution of $S$, which is a lift of $\sigma_1\times\mathrm{id}_{C_2}$ (resp. $\mathrm{id}_{C_1}\times \sigma_2$) on $C_1\times C_2$. Let $\tau_3$ be an involution on $S$ such that $\tau_1$, $\tau_2$, and $\tau_3$ generate the Galois group $G$ of $g$. We denote by $\tau_i^*$ the dual of $\tau_i$ on the character group $G^*$. Then $$g_*\omega_S=\cO_{E_1\times E_2}\bigoplus \big(\cL_{\tau_1^*}\oplus \cL_{\tau_2^*}\oplus \cL_{\tau_3^*}\big)\bigoplus\big(\cL_{\tau_1^*\tau_2^*}\oplus \cL_{\tau_1^*\tau_3^*}\oplus \cL_{\tau_2^*\tau_3^*}\big)\bigoplus \cL_{\tau_1^*\tau_2^*\tau_3^*},$$ where we denote by $\cL_{\chi}$ the $\chi$-eigensheaf of $g_*\omega_X$ for $\chi\in G^*$. In particular, $\cL_{\tau_i^*}$ is the pull-back of an ample line bundle on $E_i$ for $i=1, 2$. 
 
 We claim that we can choose $S$  general  so that for any other non-trivial character $\chi$, $\cL_{\chi}$ is M-regular on $E_1\times E_2$. We can take $S'\rightarrow E_1\times E_2$ to be a double cover such that the branched locus  is a  smooth irreducible ample divisor, which intersect transversally with the branched divisors of $C_1\times C_2\rightarrow E_1\times E_2$. Let $S=(C_1\times C_2)\times_{E_1\times E_2}S'$. Then $S\rightarrow C_1\times C_2 $ is a double cover and it is easy to see that $S\rightarrow E_1\times E_2$ is a $G$-cover with the desired property.

Let $\rho: C\rightarrow E$ be another double cover from a smooth curve of genus $\geq 2$ to an elliptic curve with the involution $\sigma$. We write $\rho_{*}\omega_{C}=\cO_{E_3}\oplus \cL_{\sigma^*}$.

Let $X$ be a smooth model of $(S\times C)/\langle \tau_1\tau_2\times \sigma \rangle$ and let $Z$ be a smooth model of $(S\times C)/\langle \tau_1\tau_2\times \sigma, \tau_3\times\sigma \rangle$ and after further birational modifications, we have the natural morphisms $a: X\xrightarrow{\varphi} Z\xrightarrow{g} A:=E_1\times E_2\times E_3$. Then we have $$a_*\omega_X=\cO_A\oplus \cL_{\tau_1^*\tau_2^*}\oplus\cL_{\tau_3^*}\oplus\cL_{\tau_1^*\tau_2^*\tau_3^*}\oplus (\cL_{\tau_1^*}\boxtimes\cL_{\sigma^*})\oplus (\cL_{\tau_2^*}\boxtimes\cL_{\sigma^*})\oplus (\cL_{\tau_1^*\tau_3^*}\boxtimes\cL_{\sigma^*})\oplus (\cL_{\tau_2^*\tau_3^*}\boxtimes\cL_{\sigma^*}),$$
and $$g_*\omega_Z=\cO_A\oplus  \cL_{\tau_1^*\tau_2^*}\oplus(\cL_{\tau_1^*\tau_3^*}\boxtimes\cL_{\sigma^*})\oplus (\cL_{\tau_2^*\tau_3^*}\boxtimes\cL_{\sigma^*}).$$
Hence $(\cL_{\tau_1^*\tau_3^*}\boxtimes\cL_{\sigma^*})\oplus (\cL_{\tau_2^*\tau_3^*}\boxtimes\cL_{\sigma^*})$ is the M-regular part of both $a_*\omega_X$ and $g_*\omega_Z$ and thus $\chi(X, \omega_X)=\chi(Z, \omega_Z)$.
\end{exam}

The following proposition finishes the discussion when $\kappa(Z)=2$.
\begin{prop}\label{k=2-2}
In case 3, if $\kappa(Z)=2$, then $\deg\varphi_X\leq 6$.  
\end{prop}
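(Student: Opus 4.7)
The plan is to split by the value of $\chi(Z,\omega_Z)$. First suppose $\chi(X,\omega_X)=\chi(Z,\omega_Z)>0$: then Theorem~\ref{=} gives the complete list of possibilities for $\varphi_X$, namely $\mathrm{(A)}$, $\mathrm{(B)}$, $\mathrm{(C1)}$--$\mathrm{(C4)}$. In cases $\mathrm{(B)}$, $\mathrm{(C3)}$ and $\mathrm{(C4)}$ the eventual paracanonical image is birational to $A_X$, so $\kappa(Z)=0$; in cases $\mathrm{(A)}$ and $\mathrm{(C1)}$ direct inspection of the quotients forces $Z$ to be of general type, so $\kappa(Z)=3$. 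Thus the only possibility compatible with $\kappa(Z)=2$ is case $\mathrm{(C2)}$, where the eventual paracanonical image is birational to $(C_1\times C_2)/\langle\tau_1\times\tau_2\rangle\times E_3$ and $\deg\varphi_X=4\leq 6$.

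The main work is in the case $\chi(Z,\omega_Z)=0$ with $\kappa(Z)=2$. Passing to a sufficiently large \'etale cover per Remark~\ref{strategy}, $\tilde Z$ admits an Iitaka fibration $h_Z:\tilde Z\to S:=\tilde Z_{B_1}$ onto a smooth surface of general type and of maximal Albanese dimension, with general fiber an elliptic curve; this corresponds to $p_{B_1}\in\cC_Z\subseteq\cC_{\tilde X}$ of relative dimension $1$, so $\dim B_1=2$. Since $X$ does not satisfy $\C1$, the Stein factorization $\tilde X\to\tilde X_{B_1}\to B_1$ gives a smooth surface $\tilde X_{B_1}$ of general type, and there is an induced generically finite morphism $\psi:\tilde X_{B_1}\to S$ fitting into a commutative square with $h_Z$. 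Writing the restriction to a general fiber as $F_X\to F_Z$ with $F_Z$ an elliptic curve, we have $\deg\varphi_X=\deg\psi\cdot\deg(F_X/F_Z)$, and it suffices to bound each factor.

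To bound $\deg\psi$: Lemma~\ref{complementary} produces $p_{B_2}\in\cC_{\tilde X}$ such that $p_{B_1}\times p_{B_2}:\tilde A_X\to B_1\times B_2$ is an isogeny onto its image, and $\tilde X_{B_2}$ is a surface of general type. Combining sections of $|K_{\tilde X_{B_2}}\otimes Q_2|$ pulled back from the second factor with sections of $|K_{\tilde X_{B_1}}\otimes Q_1|$ inside $|K_{\tilde X}\otimes Q_1\otimes Q_2|$, one adapts the argument of Lemma~\ref{base} to show that the map of $\tilde X_{B_1}$ induced by $|K_{\tilde X_{B_1}}\otimes Q_1|$ factors birationally through $\psi$. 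Hence $\psi$ is birationally dominated by the eventual paracanonical map of $\tilde X_{B_1}$ with respect to $\tilde X_{B_1}\to B_1$; since the image dominates $S$ which is of general type, Theorem~\ref{variation} gives $\deg\psi\leq 2$. To bound the fiber degree, restrict $\pi_Z^*\cL\hookrightarrow\tilde\varphi_{X*}\omega_{\tilde X}$ along $F_X$: a parallel argument together with the curve-case analysis recorded below Lemma~\ref{characterization} shows that $F_X\to F_Z$ is birationally dominated by the eventual paracanonical map of $F_X\to K$ (where $K$ is the kernel of $\tilde A_X\to B_1$), hence is either birational or a double cover. Combining, $\deg\varphi_X\leq 2\cdot 2=4\leq 6$.

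The hardest step is establishing the analog of Lemma~\ref{base} in the absence of property $\C1$, since that lemma's direct construction $D+|K_{\tilde X_{B'}}\otimes Q|$ required a curve fiber. The key idea is to use the surface fiber $\tilde X_{B_2}$ from Lemma~\ref{complementary} instead, and to argue that plurigenera with twists on $\tilde X$ decompose along the near-product structure $\tilde X\to\tilde X_{B_1}\times_{?}\tilde X_{B_2}$ in a way that separates the two surface factors and forces $\cL$ to descend along each. Once that separation is in place, Theorem~\ref{variation} controls the surface side and the curve-case discussion of Lemma~\ref{characterization} controls the fiber side, with the slack between the resulting bound $4$ and the stated effective bound $6$ absorbing the edge cases where the rank-one subsheaf $\cL$ is not exactly the eventual paracanonical on either factor.
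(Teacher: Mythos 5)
Your reduction to the case $\chi(Z,\omega_Z)=0$ is harmless (in fact, $\kappa(Z)=2<3$ already forces $\chi(Z,\omega_Z)=0$ for a threefold of maximal Albanese dimension, so your first paragraph treats an empty case), but the second part, which carries all the weight, has two genuine gaps. First, the ``analogue of Lemma~\ref{base}'' you need is not merely harder here: the mechanism of that lemma is unavailable. Lemma~\ref{base} works because under property $\C1$ one has a curve $C=\tilde{X}_B$ and $\tilde{X}$ is generically finite over $C\times\tilde{X}_{B'}$, so that $D+|K_{\tilde{X}_{B'}}\otimes Q|$ sits inside $|K_{\tilde{X}}\otimes P\otimes Q|$. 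In Case 3 every $B\in\cC_X$ has $\dim B=2$, so $\tilde{X}_{B_1}\times\tilde{X}_{B_2}$ is a fourfold and $\tilde{X}$ is \emph{not} generically finite over it; one cannot add pullbacks of $K_{\tilde{X}_{B_1}}$ and $K_{\tilde{X}_{B_2}}$ inside $K_{\tilde{X}}$, and ``plurigenera decompose along the near-product structure'' is an assertion, not an argument. Second, your fiber bound $\deg(F_X/F_Z)\leq 2$ requires a restriction statement in the style of Lemma~\ref{reduction}, namely that $\cL|_{F_Z}$ still computes $h^0(F_X,\omega_{F_X}\otimes Q)$ for $Q$ general. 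Lemma~\ref{reduction} obtains this because the non-M-regular summands of the pushforward die under $q_*(-\otimes Q)$, the base $V_B$ there being of general type; here the summands $p_{B'}^*\cF_{B'}^X$ with $p_{B'}\neq p_{B_1}$ restrict nontrivially to the fibers $K$ of $p_{B_1}$ (indeed Lemma~\ref{complementary} says $p_{B'}|_K$ is an isogeny onto its image, so these restrictions are M-regular on $K$), they survive the restriction, and $\cL|_{F_Z}$ need not compute the generic $h^0$ on the fiber.

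These gaps are not cosmetic: if your two claims held, you would obtain $\deg\varphi_X\leq 4$, strictly stronger than the stated bound, while the paper's own case analysis leaves open configurations that your claims would have to exclude but never address. The paper instead forms $\hat{Z}$, a smooth model of the main component of $\tilde{X}_{B_1}\times_{B_1}\tilde{Z}$, and splits according to whether $\chi(\hat{Z},\omega_{\hat{Z}})>0$ (whence Theorem~\ref{=}, rank counting in the decomposition theorem, and birational isotriviality via \cite{BBG}) or $\hat{Z}$ is an Ein--Lazarsfeld threefold; the bound $6$ arises precisely in the latter case as $2k$ with $k=\deg(\tilde{X}/\hat{Z})\leq 3$. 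Moreover, in the subcase $|\cC_{\cQ}|=1$ the paper shows $\tilde{X}\rightarrow S$ is a fibration and only gets $k\leq 3$: there your $\psi$ is birational and the fiber map has degree $3$, exactly the situation your fiber bound of $2$ would forbid but which you have not ruled out. To repair the argument you would need either to prove the two restriction statements above or to follow the paper's route through $\hat{Z}$; as written, neither is done.
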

\begin{proof}
We take a sufficiently large \'etale cover $\tilde{A}_X\rightarrow A_X$ and suppose the corresponding $\tilde{Z}$ is birational to $S\times E$. We denote $B:=A_S$ and $p_B:\tilde{A}_X=A_{\tilde{Z}}\rightarrow B$ the natural projection.

 If $\varphi_X$ is a birational double cover, then nothing needs to be proved. Otherwise, $\rank \cQ\geq 2$ and hence $\cC_{\cQ}\neq\emptyset$. By Lemma \ref{complementary}, there exists $p_{B_1}\in\cC_{\cQ}$, which is different from $p_B$. Let $\hat{Z}$ be a smooth model of the main component of the fiber product $\tilde{X}_{B_1}\times_{B_1}\tilde{Z}$. Then $\hat{Z}$ is of general type.
 
 Either $\chi(\hat{Z}, \omega_{\hat{Z}})>0$ and hence $\chi(\tilde{X}, \omega_{\tilde{X}})=\chi(\hat{Z}, \omega_{\hat{Z}})$ or we may assume that $\hat{Z}$ is birational to an Ein-Lazarsfeld threefold.
 
 In the first case, if $\tilde{X}\rightarrow \hat{Z}$ is not birational, we apply Theorem \ref{=}. It can happen only in Case $\mathrm{(C2)}$. Then $\varphi_X$ is a birational bidouble cover. 
 
 We then assume that $\tilde{X}$ is birational to the main component of $\tilde{X}_{B_i}\times_{B_i}\tilde{Z}$ for any $p_{B_i}\in\cC_{\cQ}$. Since $\tilde{X}$ does not admit any fibration to a smooth projective curve of genus $\geq 2$, we see that the natural morphism $\tilde{X}\rightarrow S$ is a fibration. Hence $p_B\notin \cC_{\cQ}$. Thus  for any $p_{B_i}\in \cC_{\cQ}$, $\deg(X_{B_i}/B_i)=\deg(\varphi_X):=k$.
 By the decomposition theorem, we then have $$\deg(\tilde{X}/\tilde{A}_X)=k\deg(\tilde{Z}/\tilde{A}_X)=\deg(\tilde{Z}/\tilde{A}_X)+(k-1)|\cC_{\cQ}|+\rank\cF^{\tilde{X}}.$$ If $|\cC_{\cQ}|=1$, we have
  $(k-1)(\deg(\tilde{Z}/\tilde{A}_X)-1)\leq \rank\cF^{\tilde{X}}\leq \deg(\tilde{Z}/\tilde{A}_X).$ Hence $k\leq 3$.  if $|\cC_{\cQ}|\geq 2$, pick $p_{B_1} \in\cC_{\cQ}$, $\tilde{X}$ is birational to the main component of $\tilde{X}_{B_1}\times_{B_1}\tilde{Z}$. Let $\PE_1$ be the neutral component of $\PB_1\cap\PB$, then $\tilde{X}$ is birational to an \'etale cover of the main component of $S\times_{E_1}\tilde{X}_{B_1}$.
 Then we see that  the only fibrations belong to $\cC_X$ which dominates $E_1$ are $p_B$ and $p_{B_1}$. Hence pick $p_{B_2}\in\cC_{\cQ}$, the neutral component $\PE_2$ of $\PB_2\cap\PB$ is different from $\PE_1$ and $\tilde{X}$ is birational to an \'etale cover of the main component of $S\times_{E_2}\tilde{X}_{B_2}$. By the same argument as in Step 3 of the proof of Theorem \ref{=}, we conclude that the fibration $\tilde{X}\rightarrow S$ is birational isotrivial. Hence there exist a birational $G$-cover $S_1\rightarrow S$ and a smooth projective curve $C$ with a faithful $G$-action such that $\tilde{X}$ is birational to the diagonal quotient $(S_1\times C)/G$. Moreover, since $q(\tilde{X})=3$, we may assume that $C/G\simeq E$. Moreover, $\tilde{X}_{B_i}\rightarrow E_i$ are birational isotrivial fibrations with a general fiber $C$ for $i=1,2$. Hence there exists smooth projective curves $C_1$ and $C_2$ both with faithful $G$-actions such that $\tilde{X}_{B_i}$ is birational to the diagonal quotient $(C_i\times C)/G$. Hence $X_1:=(C_1\times C_2\times C)/G\xrightarrow{t}Z_1:= (C_1\times C_2)/G\times E\rightarrow  A_{(C_1\times C_2)/G}\times E $ is a representative of $g\circ \varphi$. By Lemma \ref{refi-repre}, there exists at most one character $\chi$ of $G$ such that $\chi(Z_1, (t_*\omega_{X_1})^{\chi})\neq 0$. It is easy to see that it is only possible that $G=\mathbb{Z}/2\mathbb{Z}$. Hence $\deg\varphi_X=|G|=2$.
 
In the second case, we may denote  $\cC_{\hat{Z}}=\{p_{B_1}, p_{B_2}, p_{B}\}$.
Note that $t: \tilde{X}\rightarrow \hat{Z}$ is minimal, by Theorem \ref{=} and Corollary \ref{intermediate}. Denote $t_*\omega_{\tilde{X}}=\omega_{\hat{Z}}\oplus\hat{\cQ}$. Then for any $p_B\in\cC_{\hat{\cQ}}$, $\tilde{X}$ is birational  to the main component of $\tilde{X}_B\times_{\hat{Z}_B}\hat{Z}$. Denote by $k=\deg t$. Then by the decomposition theorem, we have \begin{eqnarray*}\deg(\tilde{X}/\tilde{A}_X)=4k &=&\deg(\hat{Z}/\tilde{A}_X)+\sum_{p_B\in\cC_{\hat{\cQ}}}(\deg(\tilde{X}_B/B)-\deg(\hat{Z}_B/B))+\rank\cF^{\tilde{X}}\\&=&4+\sum_{p_B\in\cC_{\hat{\cQ}}}(k-1)\deg(\hat{Z}_B/B)+\rank\cF^{\tilde{X}}.\end{eqnarray*} Note that in this case $\deg(Z/\tilde{A}_X)=2$. Hence $\rank\cF^{\tilde{X}}\leq 2$. Thus $k-1\leq\rank\cF^{\tilde{X}}\leq 2 $. We then have $\deg\varphi_X\leq 6$. 
\end{proof}
With both Proposition \ref{k=2} and \ref{k=2-2}, we have
\begin{theo}\label{k=2-final}Let $X$ be  a threefold of maximal Albanese dimension with $\chi(X, \omega_X)>0$ and let $Z$ be the image of the eventual paracanonical map $\varphi_X$ of $X$. If $\kappa(Z)=2$, then $\deg\varphi_X\leq 6$.
\end{theo}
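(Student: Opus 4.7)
The plan is to combine the case analyses developed earlier in Section 4. Since $Z$ is of maximal Albanese dimension one automatically has $\tilde{q}(Z)\ge 3$, so I would split the proof into three mutually exclusive situations: (i) $\tilde{q}(Z)>3$; (ii) $\tilde{q}(Z)=3$ and $X$ satisfies property $\C1$; (iii) $\tilde{q}(Z)=3$ and $X$ does not satisfy $\C1$.

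In situation (i), after passing to a sufficiently large abelian \'etale cover $\tilde{A}_X\to A_X$ the image $V=a_{\tilde{Z}}(\tilde{Z})$ is a proper subvariety of $A_{\tilde{Z}}$, and because $\kappa(\tilde Z)=\kappa(Z)=2$ one has $\kappa(V)\in\{1,2\}$; Proposition \ref{case1} then yields $\deg\varphi_X\le 4$. In situation (ii) we are exactly in the setting of Case 2 with $\kappa(Z)=2$, so Proposition \ref{k=2} gives $\deg\varphi_X\le 4$. In situation (iii) we are in the setting of Case 3 with $\kappa(Z)=2$, and Proposition \ref{k=2-2} gives $\deg\varphi_X\le 6$.

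Taking the maximum of the three bounds proves the theorem. The whole content of the statement is concentrated in Proposition \ref{k=2-2}, which is the main obstacle: its proof requires reducing to the situation where $\tilde{Z}$ is birational to $S\times E$ for a surface $S$ of general type and an elliptic curve $E$, then a structural analysis of minimal representatives of $g\circ\varphi$ through Theorem \ref{=}, and finally a rank count using the decomposition theorem that forces $\rank\cF^{\tilde X}\le 2$ and hence $\deg\varphi_X\le 6$.
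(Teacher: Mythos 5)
Your proposal is correct and follows essentially the same route as the paper, which derives the theorem by combining Proposition \ref{k=2} (Case 2) and Proposition \ref{k=2-2} (Case 3), the latter carrying all the real content. You are in fact slightly more careful than the paper's one-line deduction, since you also explicitly dispose of the possibility $\tilde{q}(Z)>3$ via Proposition \ref{case1}, which the paper leaves implicit in its case setup.
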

\subsection{Case 4}

In this case, we can simply assume that $Z=A_X$ and $\varphi_X=a_X$ is the Albanese morphism of $X$.   Then $\cF_X=\cL$ is a rank $1$ torsion-free sheaf. We again  take a sufficiently large and divisible \'etale cover of $A$ to kill all the torsion line bundles $Q_{B,i}$ and consider the corresponding base change of the morphism $a_X: X\rightarrow A_X$. For simplicity of notations, we write $f: X\rightarrow A$ the base change and then 
\begin{eqnarray}\label{decom2}f_*\omega_X=\cO_A\oplus\cQ=\cO_A\oplus \cL\oplus\bigoplus_{p_B\in\cC_{\cQ}}p_B^*\cF_B,
\end{eqnarray}
where $B$ are abelian varieties of dimension $1$ or $2$. 
 
\begin{prop}\label{deg8}
Under the above setting, if there exists $p_B\in\cC_{\cQ}$ with $\dim B=1$, then $\deg \varphi_X=\deg f\leq 8$. Moreover, $\deg \varphi_X=8$ iff $\varphi_X$ is a $(\mathbb{Z}/2\mathbb{Z})^3$-cover.
\end{prop}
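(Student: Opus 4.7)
The plan is to restrict $f$ to a general fiber of the Stein factorization of $X\to B_{0}$ and reduce the statement to the surface case from Theorem \ref{surface}. Fix $p_{B_{0}}\in\cC_{\cQ}$ with $\dim B_{0}=1$, and write $K:=\ker p_{B_{0}}$, an abelian surface. Lemma \ref{complementary} applied to $p_{B_{0}}$ (whose induced cover $X_{B_{0}}\to B_{0}$ is non-trivial since $\cF_{B_{0}}\neq 0$) produces $p_{B_{1}}\in\cC_{X}$ with $\dim B_{1}=2$ such that $p_{B_{0}}\times p_{B_{1}}:A\to B_{0}\times B_{1}$ is an isogeny; in particular $p_{B_{1}}|_{K}:K\to B_{1}$ is an isogeny.

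Take the Stein factorization $X\to X_{B_{0}}\to B_{0}$, with $X_{B_{0}}$ a smooth curve of general type, and let $X_{K}$ denote a general connected fiber of $X\to X_{B_{0}}$, a smooth projective surface of general type. The induced morphism $f_{K}:X_{K}\to K$ is generically finite and surjective, and since $a_{B_{0}*}\omega_{X_{B_{0}}}=\cO_{B_{0}}\oplus\cF_{B_{0}}$ on the elliptic curve $B_{0}$, one has the multiplicativity
\[
\deg f \;=\; \deg(X_{B_{0}}/B_{0})\cdot\deg f_{K} \;=\; (1+\rank\cF_{B_{0}})\cdot\deg f_{K}.
\]
Base-changing the decomposition (\ref{decom2}) along $K\hookrightarrow A$, and using that $p_{B_{0}}|_{K}$ is constant while $p_{B_{1}}|_{K}$ is an isogeny, one identifies $\cL|_{K}$ (rank $1$, M-regular) and $(p_{B_{1}}|_{K})^{*}\cF_{B_{1}}$ (M-regular of rank $\rank\cF_{B_{1}}$) as M-regular summands of $f_{K*}\omega_{X_{K}}$ on $K$; the remaining summands are either trivial on $K$ or pulled back from proper quotients of $K$.

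Next, combining Theorem \ref{surface} with Lemma \ref{line} applied to $X_{K}\to K$ bounds $\deg f_{K}\leq 4$, with equality only when $X_{K}$ is birational to a product of two bielliptic curves and $f_{K}$ is birationally a $(\mathbb{Z}/2\mathbb{Z})^{2}$-cover. In parallel, applying the curve analogue of Lemma \ref{characterization} (as in the remark following that lemma) to the map $X_{B_{0}}\to B_{0}$, using the rank-$1$ subsheaf of $a_{B_{0}*}\omega_{X_{B_{0}}}$ obtained by pushing $\cL$ down along $p_{B_{0}}$, forces $\deg(X_{B_{0}}/B_{0})\leq 2$, with equality precisely when $X_{B_{0}}\to B_{0}$ is a bielliptic double cover. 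Multiplying gives $\deg f \leq 2\cdot 4 = 8$.

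In the equality case $\deg f=8$, both factors saturate: $\deg(X_{B_{0}}/B_{0})=2$ and $\deg f_{K}=4$. The involution of the double cover $X_{B_{0}}\to B_{0}$ together with the two involutions of the $(\mathbb{Z}/2\mathbb{Z})^{2}$-cover $X_{K}\to K$ can then be assembled into a faithful $(\mathbb{Z}/2\mathbb{Z})^{3}$-action on $X$ with quotient $A$, reversing the construction from Step 3 of the proof of Theorem \ref{=} via the isogeny decomposition $A\sim B_{0}\times B_{1}$. The main obstacle I anticipate is the surface-level bound $\deg f_{K}\leq 4$: while Theorem \ref{surface} immediately gives $\deg\varphi_{X_{K}}\leq 4$, the possibly extra factor $\deg(Z_{X_{K}}/K)$ between the eventual paracanonical image of $X_{K}$ and $K$ must be controlled, and this requires using the M-regular summands identified above together with Lemma \ref{line} applied to $X_{K}$ to force $Z_{X_{K}}$ birational to $K$ whenever $\deg\varphi_{X_{K}}\geq 3$ (via Lemma \ref{image}), and otherwise to show $\deg(Z_{X_{K}}/K)\leq 2$.
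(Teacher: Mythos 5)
Your overall shape (split $\deg f$ as $\deg(X_{B_0}/B_0)\cdot\deg(X_K/K)$ and reduce to the curve and surface cases) is the same as the paper's, but the two steps that carry all the weight are not justified, and the mechanism you propose for them does not work. The paper's proof hinges on one identification that you never make: form the Stein factorizations $C_{B_0}\to B_0$ and $S_{B_1}\to B_1$, note that $C_{B_0}\times S_{B_1}\to B_0\times B_1$ is a sub-representative of $f$, pull it back along the isogeny $\rho: A\to B_0\times B_1$ to get a factorization $X\to X'\to A$, and observe that $g_*\omega_{X'}$ contains the M-regular summand $\rho^*(\cF_{B_0}\boxtimes\cF_{B_1})$; since the M-regular part of $f_*\omega_X$ is the rank-one sheaf $\cL$, the uniqueness in the decomposition theorem forces $\cL\simeq\rho^*(\cF_{B_0}\boxtimes\cF_{B_1})$ and hence $\rank\cF_{B_0}=\rank\cF_{B_1}=1$. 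This single fact gives (i) $\deg(X_{B_0}/B_0)=1+\rank\cF_{B_0}=2$, (ii) that $\cL$ restricted to a fiber $A_c$ of $A\to B_0$ is the rank-one sheaf $\rho_c^*\cF_{B_1}$ on $A_c$ computing $h^0_{a_c}(\omega_{X_c})$ (after $\chi(X_c,\omega_{X_c})=\chi(X'_c,\omega_{X'_c})$ is extracted from Corollary \ref{intermediate} via Fujita/Riemann--Roch along the fibration over $C_{B_0}$), so that $a_c$ is itself the eventual map of $X_c$ and the Severi/Miyaoka--Yau argument gives $\deg(X_c/A_c)\le 4$, and (iii) in the equality case, that $X$ is birational to the fiber product $X'$, which is what actually produces the $(\mathbb{Z}/2\mathbb{Z})^3$-structure.

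Your substitutes for (i) and (ii) are the gaps. For (i), ``pushing $\cL$ down along $p_{B_0}$'' does not yield a rank-one subsheaf of $a_{B_0*}\omega_{X_{B_0}}=\cO_{B_0}\oplus\cF_{B_0}$: the fibers of $p_{B_0}$ are two-dimensional, so $p_{B_0*}\cL$ has rank $h^0(K,\cL|_K)$, which is not controlled, and there is no injection into $a_{B_0*}\omega_{X_{B_0}}$; nor is there any a priori reason the eventual map of the curve $X_{B_0}$ with respect to $B_0$ is non-birational, which is what you would need to invoke the curve remark and get degree $2$. For (ii), you list $\cL|_K$ and $(p_{B_1}|_K)^*\cF_{B_1}$ as M-regular summands of $f_{K*}\omega_{X_K}$ as though they were distinct; the whole point of the paper's identification is that they coincide and exhaust the M-regular part, which is exactly what lets one conclude that the generic $h^0$ on the fiber is computed by a single rank-one sheaf on $K$ and hence that $\deg(Z_{X_K}/K)=1$ --- the very issue you flag at the end as ``the main obstacle'' but do not resolve. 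Without the sub-representative construction and the resulting $\cL\simeq\rho^*(\cF_{B_0}\boxtimes\cF_{B_1})$, neither the bound $8$ nor the characterization of the equality case follows.
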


\begin{proof}
We denote by $\cF_{B_1}$ the direct summand such that $\dim B_1=1$ and let $X\rightarrow C_{B_1}\xrightarrow{f_1} B_1$ be the Stein factorization of natural morphism $X\rightarrow B_1$. Then $f_{1*}\omega_{C_{B_1}}=\cO_{B_1}\oplus \cF_{B_1}$ and $\deg f_1=k_1=1+\rank \cF_{B_1}>1$.

By Lemma \ref{complementary2}, there exists $p_{B_2}\in\cC_{\cQ}$ such that $p_{B_1}\times p_{B_2}$ is an isogeny.

Let $X\rightarrow S_{B_2}\xrightarrow{f_2} B_2$ be the Stein factorization of the natural morphism $X\rightarrow B_2$. Then \begin{eqnarray}\label{decom3}f_{2*}\omega_{S_{B_2}}=\cO_{B_2}\oplus\cF_{B_2}\bigoplus_{p_i': B_2\rightarrow B_i}p_i'^*\cF_{B_i}, \end{eqnarray}where $B_i$ are the abelian varieties in (\ref{decom2}) which are dominated by $B_2$.

We have a commutative diagram:
\begin{eqnarray*}
\xymatrix{
X\ar[r]\ar[d] & A\ar[d]^{\rho}\\
C_{B_1}\times S_{B_2}\ar[r]^{f_1\times f_2} & B_1\times B_2,}
\end{eqnarray*}
which is a sub-representative of $f$.
Let $X'=(C_{B_1}\times S_{B_2})\times_{(B_1\times B_2)} A$. Then we have the factorization of $f$ as $f: X\xrightarrow{t} X'\xrightarrow{g} A$. Note that $g_*\omega_{X'}$ as a direct summand of $f_*\omega_X$ contains $\rho^*(\cF_{B_1}\boxtimes\cF_{B_2})$ as a direct summand. By the uniqueness of the decomposition theorem, we conclude that $\cL\simeq \rho^*(\cF_{B_1}\boxtimes\cF_{B_2})$. Hence both $\cF_{B_1}$ and $\cF_{B_2} $ are of rank $1$. In particular, $f_1$ is a double cover.

By Corollary \ref{intermediate}, we know that $\chi(X, \omega_X)=\chi(X', \omega_{X'})$. Since both $X$ and $X'$ admit the fibration over $C_{B_1}$, we conclude by Riemann-Roch that, for $c\in C_{B_1}$ general,  $\chi(X_c, \omega_{X_c})=\chi(X'_c, \omega_{X'_c})$. Denote by $A_c$ the fiber of $A$ over $f_1(c)\in B_1$ and let $\rho_c: A_c\rightarrow B_2$ be the natural isogeny.
Then, consider the restricted morphism $a_c: X_c\rightarrow X_c'\xrightarrow{a_c'} A_{c}$, we have $h^0_{a_c}(X_c, \omega_{X_c})=h^0_{a_c'}(X'_c, \omega_{X'_c})=h^0_{\mathrm{Id}_{A_c}}(A_c, \rho_c^*\cF_{B_2})$, where $\rho_c^*\cF_{B_2}$ is a rank $1$ torsion-free sheaf on $A_c$. By the same argument as in Section 2 and Proposition \ref{case1}, we conclude that $\deg(X_c/A_c)\leq 4$ and when the equality holds, $X_c\rightarrow A_c$ is a $(\mathbb{Z}/2\mathbb{Z})^2$-cover. In this case, $X_c'$ is birational to $X_c$ and $S_{B_2}\rightarrow B_2$ is also a $(\mathbb{Z}/2\mathbb{Z})^2$-cover. We then conclude that $f$ is a $(\mathbb{Z}/2\mathbb{Z})^3$-cover.

 \end{proof}

Let's recall that a threefold of maximal Albanese dimension, of general type, with $\chi(X, \omega_X)=0$ is called an Ein-Lazarsfeld threefold and its Albanese morphism is a $(\mathbb{Z}/2\mathbb{Z})^2$-cover (see \cite{CDJ}).

\begin{prop}\label{EL}Assume that $g: Y\rightarrow A$ be the Albanese morphism of an Ein-Lazarsfeld threefold. Assume that for some generically finite morphism $t: X\rightarrow Y$ of degree $\geq 2$, the induced morphism $f=g\circ t: X\rightarrow A$ is birationally equivalent to the eventual paracanonical map of $X$, then $f$ is a birational $(\mathbb{Z}/2\mathbb{Z})^3$-cover.
\end{prop}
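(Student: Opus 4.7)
The plan is to reduce directly to Proposition~\ref{deg8}. Since $Y$ is an Ein-Lazarsfeld threefold, after a suitable abelian étale base change of $A$ we may assume that $Y$ is birational to the diagonal quotient $(C_1\times C_2\times C_3)/\langle\tau_1\tau_2\tau_3\rangle$, where each $\rho_i\colon C_i\to E_i$ is a bielliptic double cover, $A=E_1\times E_2\times E_3$, and $g$ is a birational $(\mathbb{Z}/2\mathbb{Z})^2$-cover of degree $4$. The hypothesis that $f$ is birationally equivalent to $\varphi_X$ puts us in Case~$4$, so we may identify $Z$ with $A$; then $\cF^Z=0$, $\cC_X=\cC_{\cQ}$, and by Lemma~\ref{line} the sheaf $\cF^X$ has rank~$1$.

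The key geometric input is that some $p_B\in\cC_{\cQ}$ has $\dim B=1$. Indeed, for each $i$ the projection $C_1\times C_2\times C_3\to C_i$ descends to a fibration $Y\to C_i$, and composing with $t$ and taking the Stein factorization yields $X\to X_{E_i}\to E_i$ in which the curve $X_{E_i}$ dominates $C_i$. Hence $g(X_{E_i})\geq g(C_i)\geq 2$, so $\chi(X_{E_i},\omega_{X_{E_i}})>0$, and by the characterization of $\cC_X$ recalled just after (\ref{explanation}) we obtain $p_{E_i}\in\cC_X=\cC_{\cQ}$ with $\dim E_i=1$.

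Proposition~\ref{deg8} then applies and gives $\deg f\leq 8$, with equality only when $f$ is a birational $(\mathbb{Z}/2\mathbb{Z})^3$-cover. On the other hand, $\deg f=\deg g\cdot\deg t\geq 4\cdot 2=8$. The two inequalities force equality throughout, so $\deg f=8$ and $f$ is a birational $(\mathbb{Z}/2\mathbb{Z})^3$-cover. The only real bookkeeping is the routine étale base-change of Remark~\ref{strategy} used to put $Y$ into its Ein-Lazarsfeld product form and to trivialize the torsion line bundles $Q_{B,i}$ in the decomposition of $a_{X*}\omega_X$; this step preserves the property of being a birational $(\mathbb{Z}/2\mathbb{Z})^3$-cover and so causes no genuine difficulty.
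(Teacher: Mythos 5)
There is a genuine gap at the key geometric step. You claim that each projection $C_1\times C_2\times C_3\to C_i$ descends to a fibration $Y\to C_i$, so that the Stein factorization of $X\to E_i$ produces a curve $X_{E_i}$ dominating $C_i$ and hence of genus $\geq 2$, which would put $p_{E_i}$ in $\cC_{\cQ}$ and let you invoke Proposition \ref{deg8}. But the projection to $C_i$ is \emph{not} equivariant for the diagonal involution $\tau_1\times\tau_2\times\tau_3$: the point $(x_1,x_2,x_3)$ and its image $(\tau_1x_1,\tau_2x_2,\tau_3x_3)$ map to the two distinct points $x_i$ and $\tau_ix_i$ of $C_i$. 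The projection therefore only descends to $Y\to C_i/\langle\tau_i\rangle=E_i$, and this map has \emph{connected} fibers (the two components $\{x_i\}\times C_j\times C_k$ and $\{\tau_ix_i\}\times C_j\times C_k$ of the fiber upstairs are identified by the involution), consistent with \cite[Lemma 5.3]{CDJ}. So $Y$ has no fibration onto a curve of genus $\geq 2$ --- this is exactly why $\chi(Y,\omega_Y)=0$ --- and there is no reason for $X_{E_i}$ to dominate $C_i$, nor for $\cC_{\cQ}$ to contain any one-dimensional quotient. Without that, the hypothesis of Proposition \ref{deg8} is unverified and your reduction collapses; indeed, for the Ein--Lazarsfeld threefold one has $\cC_Y=\{p_{B_1},p_{B_2},p_{B_3}\}$ with all $B_i=E_j\times E_k$ of dimension $2$, and this case is precisely why the paper needs Proposition \ref{EL} as a statement separate from Proposition \ref{deg8}.

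The paper's actual argument is different in substance: it first shows $t$ is minimal (otherwise $f$ would factor through a general-type threefold $Z$ with $\chi(Z,\omega_Z)=\chi(X,\omega_X)>0$ and $\deg(X/A)>8$, contradicting Theorem \ref{=}); then, writing $t_*\omega_X=\omega_Y\oplus\cQ$ and $a=\deg t$, it uses the minimality to show $X$ is birational to the main component of $X_B\times_{Y_B}Y$ for $p_B\in\cC_{\cQ}$, computes $\rank\cF^{\cQ}_B\in\{a-1,2a-2\}$ according to whether $Y_B\to B$ has degree $1$ or $2$, and concludes from the rank identity in the decomposition theorem that $a-1$ divides $\rank\cL=1$, forcing $a=2$ and the $(\mathbb{Z}/2\mathbb{Z})^3$-cover structure. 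Your final numerical step ($\deg f=\deg g\cdot\deg t\geq 8$ forcing equality) is fine as far as it goes, but it rests on a bound you have not established.
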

\begin{proof}
Assume that $f$ is the eventual paracanonical map of $X$,  then  $f$ is the Albanese morphism of $X$. We are free to take abelian \'etale covers of $A_X$ and consider the base change. Hence we will assume that no torsion line bundles appear in the decomposition formula.

 If $t$ is not minimal, then it factors birationally as $X\rightarrow Z\rightarrow  Y$ with $\deg(X/Z)\geq 2$ and $\deg(Z/Y)\geq 2$, then $\chi(Z, \omega_Z)>0$ by the structure of Ein-Lazarsfeld variety and hence by Corollary \ref{intermediate}, $\chi(Z, \omega_Z)=\chi(X, \omega_X)>0$.  However, $\deg(X/A)> 8$, which is a contradiction to Theorem \ref{=}. Hence $t$ is minimal.

We write $t_*\omega_X=\omega_Y\oplus \cQ$. Then $g_*\cQ$ has rank $\geq 4$, while $\cF_X=\cL$ is of rank $1$. Hence there exists $p_B\in \cC_{\cQ}$. Consider the induced morphism $X_B\rightarrow Y_B$, which is not birational. By the minimality of the extension $\mathbb{C}(X)/\mathbb{C}(Y)$, we conclude that $X$ is birational to the main component of $X_B\times_{Y_B}Y$. Denote by $a=\deg(X_B/Y_B)=\deg (X/Y)$.

For all $p_B\in\cC_{\cQ}$, either $\deg(Y_B/B)=2$ or $Y_B$ is birational to $B$. Hence either $\deg(X_{B}/B)=2a$ or $\deg(X_B/B)=a$ and $\rank\cF^{\cQ}_B=2a-2$ or $a-1$.

 Since  $$\rank f_*\omega_X=\rank  g_*\omega_Y+\rank \cL+\sum_{p_B\in\cC_{\cQ}}\rank \cF_{B}^{\cQ},$$ we conclude that $a-1$ divides $\rank \cL=1$. Thus  $a=2$ and $X$ is birational to the main component of $X_B\times_{Y_B}Y$ for any $p_B\in\cC_{\cQ}$. Hence $f: X\rightarrow A$ is a birational $(\mathbb{Z}/2\mathbb{Z})^3$-cover.
 \end{proof}

\begin{prop}\label{deg4}
Assume that the Albanese morphism $a_X: X\rightarrow A_X$ is birationally equivalent to $\varphi_X$,  $\dim B=2$ for all $p_B\in\cC_{\cQ}$, and $a_X$ does not factors though any variety of general type with vanishing holomorphic Euler characteristic, then $a_X$ is a $(\mathbb{Z}/2\mathbb{Z})^2$-cover.
\end{prop}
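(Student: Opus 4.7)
The plan is to construct, out of the decomposition of $a_{X*}\omega_X$, an intermediate birational $(\mathbb{Z}/2\mathbb{Z})^2$-cover $g\colon X'\to A_X$ through which $a_X$ factors, and then to invoke Theorem \ref{=} to force the remaining map $X\to X'$ to be birational.

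First I would select two complementary summands. The nontrivial content lies in the case $\deg a_X>2$, where $\cC_\cQ$ is nonempty; picking any $p_{B_1}\in\cC_\cQ$, Corollary \ref{complementary2} produces $p_{B_2}\in\cC_\cQ$ such that $\rho:=p_{B_1}\times p_{B_2}\colon A_X\to B_1\times B_2$ is an isogeny. Let $X\to S_{B_i}\to B_i$ be the Stein factorization of $X\to B_i$, and let $X'$ be a smooth model of the main component of $(S_{B_1}\times S_{B_2})\times_{B_1\times B_2}A_X$, so that $a_X$ factors as $X\xrightarrow{t}X'\xrightarrow{g}A_X$. The sheaf $g_*\omega_{X'}$ is a direct summand of $a_{X*}\omega_X$ and, by base change along the isogeny $\rho$, contains $\rho^*(\cF_{B_1}\boxtimes\cF_{B_2})$ as an M-regular direct summand. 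The uniqueness in the decomposition theorem therefore forces this summand to embed into $\cF^X=\cL$; as $\rank\cL=1$, one obtains $\rank\cF_{B_1}=\rank\cF_{B_2}=1$, so each $S_{B_i}\to B_i$ is a birational double cover and $g$ is birationally a $(\mathbb{Z}/2\mathbb{Z})^2$-cover of degree $4$.

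The main obstacle is then to show that $t$ is birational. Assuming $\deg t\geq 2$, Corollary \ref{intermediate} gives two alternatives. If $\chi(X',\omega_{X'})=0$, then $X'$ --- which is of general type, being an isogeny base change of a product of two general-type double covers --- realizes $a_X$ as factoring through a variety of general type with vanishing Euler characteristic, directly contradicting the standing hypothesis. Otherwise $\chi(X,\omega_X)=\chi(X',\omega_{X'})>0$, so Theorem \ref{=} applies to $X\xrightarrow{t}X'\xrightarrow{g}A_X$, and I would rule out each of its possibilities in turn: cases (A) and (B) are incompatible with $\deg g=4$; cases (C1) and (C2) are incompatible with the constraint $\rank\cF^X=1$; and cases (C3) and (C4) each exhibit $a_X$ as factoring through an Ein-Lazarsfeld threefold, again violating the hypothesis. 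Hence $\deg t=1$, and $a_X$ is birationally the announced $(\mathbb{Z}/2\mathbb{Z})^2$-cover $g$.
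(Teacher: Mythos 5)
Your overall architecture is different from the paper's: you model the argument on the proof of Proposition \ref{deg8} (build $X'$ from the fiber product of the two Stein factorizations, force the box-product summand into the M-regular part $\cL$, then finish with Theorem \ref{=}), whereas the paper first reduces to the case where $a_X$ admits no nontrivial factorization through a variety of general type, deduces that $X$ must be birational to \emph{every} $X_{ij}'$ built from a pair in $\cC_{\cQ}$, and then gets $r_1=r_2=1$ from the numerical identity $(1+r_i)(1+r_j)=\deg a_X=2+\sum_k r_k$. Your endgame (excluding the cases of Theorem \ref{=} via $\deg g=4$, $\rank\cF^X=1$, and the Ein--Lazarsfeld factorization in (C3)/(C4)) is carried out correctly and is a legitimate alternative to the paper's counting.

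However, there is a genuine gap at the step that is supposed to produce $\rank\cF_{B_1}=\rank\cF_{B_2}=1$. In Proposition \ref{deg8} one has $\dim B_1=1$, so $\rho=p_{B_1}\times p_{B_2}\colon A_X\to B_1\times B_2$ is an honest isogeny, and then $\rho^*(\cF_{B_1}\boxtimes\cF_{B_2})$ is M-regular because external products and isogeny pullbacks preserve M-regularity; uniqueness in the decomposition theorem then forces it into $\cL$. In your situation $\dim B_1=\dim B_2=2$, so $B_1\times B_2$ is four-dimensional and $\rho$ is only an isogeny onto a codimension-one abelian subvariety $T\subset B_1\times B_2$. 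What you actually need is that the \emph{restriction} $(\cF_{B_1}\boxtimes\cF_{B_2})|_T\cong q_1^*\cF_{B_1}\otimes q_2^*\cF_{B_2}$ (with $q_i\colon T\to B_i$ the induced surjections with elliptic-curve kernels) is M-regular on $T$. This is not a formal consequence of the M-regularity of the factors and is proved nowhere in the paper; a priori this summand of $g_*\omega_{X'}$ could itself contain pieces pulled back from two-dimensional quotients of $A_X$, and uniqueness of the decomposition would then only match those pieces with other summands $p_B^*\cF_B$ of $a_{X*}\omega_X$ rather than with $\cL$. Without the M-regularity claim you cannot conclude $r_1r_2\le\rank\cL=1$, and the whole subsequent analysis (in which $\deg g=4$) is not yet available. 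Either supply a proof that $q_1^*\cF_{B_1}\otimes q_2^*\cF_{B_2}$ is M-regular in this codimension-one situation, or replace this step by the paper's mechanism: under the standing hypothesis, $X$ is forced to be birational to the general-type variety $X_{ij}'$ for \emph{each} pair in $\cC_{\cQ}$, and the resulting degree identities pin down $r_i=1$ and $|\cC_{\cQ}|=2$ without any M-regularity of restricted box products. (A smaller point: $X'$ is not literally ``an isogeny base change of $S_{B_1}\times S_{B_2}$'' --- it is finite onto a divisor in that product --- so its being of general type also deserves a sentence, e.g.\ via the fibration over $S_{B_1}$ with general-type curve fibers.)
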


\begin{proof} 
First we take $Z$ a threefold of general type over $A_X$ with $\deg(Z/A_X)$ minimal such that $a_X$ factors as $X\rightarrow Z\rightarrow A_X$. By assumption, $\chi(Z, \omega_Z)>0$. Hence $\chi(X, \omega_X)=\chi(Z, \omega_Z)$. By Theorem \ref{=} and the assumption that $X$ does not factors through Ein-Lazarsfeld varieties, we conclude that either $X$ birational to $Z$ or $a_X$ is a birational bidouble cover. Hence in the following we shall assume that $X$ birational to $Z$, namely $a_X$ does not factors non-trivially through any other varieties of general type.

Then for any $p_{B_i}, p_{B_j}\in\cC_{\cQ}$, let $\PE_{ij}$ be the neutral component of $\PB_i\cap\PB_j$. Then a desingularization $X_{ij}$ of the main component of the fiber product of $X_{B_i}\times_{E_{ij}}X_{B_j}$ is of general type.  We denote by $X_{ij}'$ the abelian \'etale cover of $X_{ij}$ such that $a_X$ factors as $X\xrightarrow{t_{ij}} X_{ij}'\rightarrow A$. Then $t_{ij}$ is birational.

 Let $r_i=\rank \cF_{B_i}$. Then $\deg(X_{B_i}/B_i)=1+r_i$ and $(1+r_i)(1+r_j)=\deg a_X$.

If $\cC_{\cQ}=\{p_{B_1}, p_{B_2}\}$, then $(1+r_1)(1+r_2)=2+r_1+r_2$. The only solution is $r_1=r_2=1$ and hence $f$ is a $(\mathbb{Z}/2\mathbb{Z})^2$-cover.  If $|\cC_{\cQ}|=s\geq 3$, then $r_i=r_1$ for all $i$ and hence  $(1+r_1)^2=2+r_1s$. Then $r_1^2+2r_1-1=r_1s$. Hence $r_1=1$ and $s=2$, which is a contradiction.  
 
\end{proof}

By Proposition \ref{deg8}, \ref{EL}, \ref{deg4}, we have
\begin{theo}\label{k=0}Let $X$ be a threefold of maximal Albanese dimension with $\chi(X, \omega_X)>0$. Assume that $a_X: X\rightarrow A_X$ is birationally equivalent to $\varphi_X$, then $\deg a_X\leq 8$ and the equality holds only when $a_X$ is a birational $(\mathbb{Z}/2\mathbb{Z})^3$-cover.
\end{theo}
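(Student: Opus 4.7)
The plan is to combine the three propositions \ref{deg8}, \ref{EL}, and \ref{deg4} of this subsection via a clean case analysis on the decomposition formula (\ref{decom2}). Since $a_X$ is birationally equivalent to $\varphi_X$, we are in the setting of Case 4. After passing to a sufficiently large and divisible \'etale base change $f:X\rightarrow A$ as in Remark \ref{strategy}, so that all torsion line bundles $Q_{B,i}$ become trivial, we obtain
$$f_*\omega_X=\cO_A\oplus \cL\oplus\bigoplus_{p_B\in\cC_{\cQ}}p_B^*\cF_B,$$
with every $B$ of dimension $1$ or $2$. The bound for $a_X$ coincides with that for $f$, and a birational $(\mathbb{Z}/2\mathbb{Z})^3$-cover structure for $f$ descends back.

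First I would separate according to whether some $p_B\in\cC_{\cQ}$ has $\dim B=1$. In that case Proposition \ref{deg8} applies verbatim: it delivers both $\deg f\leq 8$ and the classification of equality as a birational $(\mathbb{Z}/2\mathbb{Z})^3$-cover, which is already exactly the statement of the theorem in this branch.

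Otherwise every $p_B\in\cC_{\cQ}$ satisfies $\dim B=2$, and I would further split depending on whether $a_X$ factors through a threefold of general type with vanishing holomorphic Euler characteristic, equivalently, by \cite{CDJ}, through (an \'etale cover of) an Ein-Lazarsfeld threefold. If it does, Proposition \ref{EL} forces $f$ to be a birational $(\mathbb{Z}/2\mathbb{Z})^3$-cover, again of degree $8$. If it does not, the hypotheses of Proposition \ref{deg4} are met, and we conclude that $f$ is a birational $(\mathbb{Z}/2\mathbb{Z})^2$-cover of degree $4$. Collecting the three cases gives $\deg a_X\leq 8$ with equality only when $f$, and hence $a_X$, is a birational $(\mathbb{Z}/2\mathbb{Z})^3$-cover.

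The main bookkeeping issue — not really an obstacle, since the hard analytic work has been absorbed into the three propositions — is making sure the case division is exhaustive and non-overlapping. Concretely, I would verify that the condition ``$a_X$ does not factor through a variety of general type with $\chi=0$'' imposed in Proposition \ref{deg4} is exactly the complement, within the regime $\dim B=2$ for all $p_B\in\cC_{\cQ}$, of the hypothesis of Proposition \ref{EL}, so that one of the three propositions always applies.
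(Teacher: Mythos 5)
Your proposal is correct and is essentially the paper's own argument: the paper proves Theorem \ref{k=0} precisely by citing Propositions \ref{deg8}, \ref{EL}, and \ref{deg4}, and your explicit three-way case division (some $\dim B=1$; all $\dim B=2$ with a factorization through an Ein--Lazarsfeld threefold; all $\dim B=2$ without such a factorization) is exactly the intended, exhaustive trichotomy. The only content you add is the (correct) sanity check that the hypotheses of Propositions \ref{EL} and \ref{deg4} are complementary within the $\dim B=2$ regime, which the paper leaves implicit.
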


 \thebibliography{severi}
 \bibitem[BBG]{BBG} Bogomolov, F.A., B\"{o}hning, C., Graf von Bothmer, H.-C., Birationally isotrivial fiber spaces, European Journal of Mathematics, DOI 10.1007/s40879-015-0037-5.
\bibitem[BPS1]{BPS1} Barja, M., Pardini, R., Stoppino, L., Linear systems on irregular varieties.
\bibitem[BPS2]{BPS2}Barja, M., Pardini, R., Stoppino, L., The eventual paracanonical map of a variety of maximal Albanese dimension.
\bibitem[CDJ]{CDJ}  Chen, J.A.,  Debarre, O., Jiang, Z., Varieties with vanishing holomorphic Euler characteristic,  J. reine angew. Math. {\bf 691} (2014), 203--227.
\bibitem[CJ]{CJ}  Chen, J.A., Jiang, Z., Positivity in varieties of maximal Albanese dimension, J. reine angew. Math., DOI 10.1515 / crelle-2015-0027.
\bibitem[DJLS]{DJLS} Debarre, O., Jiang, Z., Lahoz, M., Savin, W., Rational cohomology tori, to appear in Geometry and Topology.
\bibitem[EL]{EL} Ein, L., Lazarsfeld, R.,
Singularities of theta divisors and the birational geometry of irregular varieties, J. Amer. Math. Soc. {\bf 10} (1997), no. 1, 243--258. 
\bibitem[GL]{GL}  Green, M., Lazarsfeld, R., Deformation theory, generic vanishing theorems, and some conjectures of Enriques, Catanese and Beauville, Invent. Math. {\bf 90} (1987), no. 2, 389–-407. 
\bibitem[Kaw]{Kaw} Kawamata, Y.,
Characterization of abelian varieties,  Compositio Math. {\bf 43} (1981), no. 2, 253--276. 
\bibitem[K1]{K1} Koll\'ar, J., Higher direct mages of dualizing sheaves. I, Ann. of Math. (2) {\bf 123} (1986), 11–-42.
\bibitem[K2]{K2} Koll\'ar, J., Higher direct images of dualizing sheaves. II. Ann. of Math. (2) {\bf 124} (1986), no. 1, 171--202.
\bibitem[Pa]{Pa} Pardini, R., Abelian covers of algebraic varieties,  J. Reine Angew. Math. {\bf 417} (1991), 191–-213. 
\bibitem[PP]{PP} Pareschi, G., Popa, M., Regularity on abelian varieties I,
J. Amer. Math. Soc. {\bf 16} (2003), 285--302.
\bibitem[PPS]{PPS} Pareschi, G., Popa, M., Schnell, C., Hodge modules on complex tori and generic vanishing for compact Kaehler manifolds, to appear in Geometry and Topology.
\bibitem[V1]{V} Viehweg, E., Weak positivity and the additivity of the Kodaira dimension for certain fibre spaces. Algebraic varieties and analytic varieties (Tokyo, 1981), 329--353,  Adv. Stud. Pure Math., {\bf 1}, North-Holland, Amsterdam, 1983. 
\bibitem[V2]{Vie} Viehweg, E., Positivity of direct image sheaves and applications to families of higher dimensional manifolds, ICTP-Lecture Notes {\bf 6} (2001), 249--284.

\end{document}